\documentclass[12pt,reqno,a4paper]{amsart}

\usepackage[breaklinks,colorlinks,plainpages,hypertexnames=false,plainpages=false]{hyperref}
\hypersetup{urlcolor=blue, citecolor=blue, linkcolor=blue}

\usepackage[utf8]{inputenc}

\usepackage[doi=true,isbn=false,style=alphabetic,sorting=nyt,backend=biber,maxnames=99,maxalphanames=4,giveninits=true]{biblatex}
\AtEveryBibitem{\clearlist{language}}
\bibliography{refs}
\DeclareSourcemap{ % do not use URL if DOI is present
  \maps[datatype=bibtex]{
    \map{
      \step[fieldsource=doi, final]
      \step[fieldset=url, null]
    }
  }
}

\usepackage{amsmath}
\usepackage{amsthm}
\usepackage{thmtools} % workaround for cref bug https://tex.stackexchange.com/questions/730148/cref-refers-to-lemmas-as-theorems
\usepackage{amssymb}
\usepackage{amsfonts}
\usepackage{enumerate}
\usepackage{mathtools}
\usepackage{tikz}
\usepackage{tikz-cd}
\usepackage{hhline}
\usepackage{stmaryrd}
\usepackage{enumitem}
\usepackage{centernot}
\usepackage{mathrsfs}
\usepackage{comment}
\usepackage{listings}
\usepackage{booktabs}
\usepackage{multicol}
\usepackage{nameref}
\usepackage{hhline}
\usepackage[capitalise, noabbrev]{cleveref} %for \cref
\usepackage{subcaption}

\usepackage[noend]{algorithmic} % for the algorithmic environment

\usepackage{mathdots}
\usepackage{cleveref} % for \cref
\usepackage{nicematrix} % for NiceMatrix
\usepackage{mathalpha} % for various math fonts
\DeclareMathAlphabet{\dutchcal}{U}{dutchcal}{m}{n}

\newcommand{\tree}{%
  \mathord{%
    \vcenter{\hbox{%
        \begin{tikzpicture}[scale=0.1]
          \useasboundingbox (-0.25,-0.5) rectangle (0.75,1);
          \draw (0,0) -- (90:1cm);
          \draw (0,0) -- (210:1cm);
          \draw (0,0) -- (330:1cm);
        \end{tikzpicture}%
      }}%
  }%
}
\newcommand{\prank}{{r_{\tree}}}
\newcommand{\phylogeneticrank}{\prank}
\newcommand{\phylogeneticdensity}{{\rho_{\,\tree}}}

\usepackage{array}
\newcolumntype{L}[1]{>{\raggedright\let\newline\\\arraybackslash\hspace{0pt}}m{#1}}
\newcolumntype{C}[1]{>{\centering\let\newline\\\arraybackslash\hspace{0pt}}m{#1}}
\newcolumntype{R}[1]{>{\raggedleft\let\newline\\\arraybackslash\hspace{0pt}}m{#1}}

\usepackage{bm}

\usepackage{tikz}
\usetikzlibrary{arrows}
\usetikzlibrary{decorations.pathreplacing}
\usetikzlibrary{matrix}
\usetikzlibrary{calc}
\usetikzlibrary{shapes}
\usetikzlibrary{patterns}
\usetikzlibrary{fit,backgrounds,scopes}
\newtheoremstyle{theoremstyle}
{10pt}      %  Space above
{5pt}       %  Space below
{\itshape}  %  Body font
{}          %  Indent amount (empty = no indent, \parindent = para indent)
{\bfseries} %  Thm head font
{}         %  Punctuation after thm head
{ }      %  Space after thm head: " " = normal interword space;
{}          %  Thm head spec (can be left empty, meaning `normal')

\newtheoremstyle{algorithmstyle}
{10pt}      %  Space above
{5pt}       %  Space below
{}  %  Body font
{}          %  Indent amount (empty = no indent, \parindent = para indent)
{\bfseries} %  Thm head font
{}         %  Punctuation after thm head
{ }      %  Space after thm head: " " = normal interword space;
{}          %  Thm head spec (can be left empty, meaning `normal')

\newtheoremstyle{examplestyle}
{10pt}      %  Space above
{5pt}       %  Space below
{}          %  Body font
{}          %  Indent amount (empty = no indent, \parindent = para indent)
{\bfseries} %  Thm head font
{}         %  Punctuation after thm head
{ }      %  Space after thm head: " " = normal interword space;
{}          %  Thm head spec (can be left empty, meaning `normal')

\makeatletter % subalign = substack + align
\newcommand{\subalign}[1]{%
  \vcenter{%
    \Let@ \restore@math@cr \default@tag
    \baselineskip\fontdimen10 \scriptfont\tw@
    \advance\baselineskip\fontdimen12 \scriptfont\tw@
    \lineskip\thr@@\fontdimen8 \scriptfont\thr@@
    \lineskiplimit\lineskip
    \ialign{\hfil$\m@th\scriptstyle##$&$\m@th\scriptstyle{}##$\hfil\crcr
      #1\crcr
    }%
  }%
}
\makeatother

\theoremstyle{theoremstyle}
\newtheorem{theorem}{Theorem}[section]
\newtheorem{lemma}[theorem]{Lemma}
\newtheorem{proposition}[theorem]{Proposition}
\newtheorem{corollary}[theorem]{Corollary}
\newtheorem*{claim}{Claim}

\theoremstyle{examplestyle}
\newtheorem{conjecture}[theorem]{Conjecture}
\newtheorem{example}[theorem]{Example}
\newtheorem{definition}[theorem]{Definition}

\newtheorem{question}[theorem]{Question}
\newtheorem{remark}[theorem]{Remark}

\newtheorem{convention}[theorem]{Convention}

\theoremstyle{algorithmstyle}
\newtheorem{algorithm}[theorem]{Algorithm}

\makeatletter
\newcommand{\customlabel}[2]{%
   \protected@write \@auxout {}{\string \newlabel {#1}{{#2}{\thepage}{#2}{#1}{}} }%
   \hypertarget{#1}{#2}
}
\makeatother

\newcommand{\RR}{\mathbb{R}}

\newcommand{\ZZ}{\mathbb{Z}}

\newcommand{\suchthat}{\;\ifnum\currentgrouptype=16 \middle\fi|\;}
\newcommand{\bigmid}{\left.\vphantom{\Big\{} \suchthat \vphantom{\Big\}}\right.}

\DeclareMathOperator{\domain}{domain}

\begin{document}

\title{The phylogenetic rank of a graph}

\author[Ashworth]{Franklin Ashworth}
\address{Department of Mathematics, Durham University, United Kingdom.}
\email{franklin.ashworth@durham.ac.uk}
\author[Clarke]{Oliver Clarke}
\address{Department of Mathematics, Durham University, United Kingdom.}
\email{oliver.clarke@durham.ac.uk}
\urladdr{https://sites.google.com/view/oclarke-homepage/}
\author[Giansiracusa]{Jeffrey Giansiracusa}
\address{Department of Mathematics, Durham University, United Kingdom.}
\email{jeffrey.giansiracusa@durham.ac.uk}
\urladdr{https://sites.google.com/view/jeffreygiansiracusa}
\author[Jones]{\\ Jackson Jones}
\address{Department of Mathematics, Durham University, United Kingdom.}
\email{jackson.jones@durham.ac.uk}
\author[Quijas-Aceves]{Julio Quijas-Aceves}
\address{Department of Mathematics, Durham University, United Kingdom.}
\email{julio.i.quijas@durham.ac.uk }
\urladdr{https://julioquacdur.github.io/}
\author[Ren]{Yue Ren}
\address{Department of Mathematics, Durham University, United Kingdom.}
\email{yue.ren2@durham.ac.uk}
\urladdr{https://yueren.de}

\subjclass[2020]{05C12, 05C62, 54E35}

\date{\today}

\keywords{phylogenetic rank, graph embedding, graph dimension}

\begin{abstract}
     The Pachter-Sturmfels phylogenetic rank of a graph $G$ is the minimal number of metric trees needed to embed $G$ isometrically.  Here, all edges of $G$ are of length one and the product of metric trees is endowed with the supremum norm. We develop both a greedy and an exact algorithm for computing phylogenetic ranks. Using our algorithms, we construct a database of phylogenetic ranks which includes all graphs on 6 and 7 vertices.  In particular, we exhibit examples disproving that the phylogenetic rank is hereditary, bounded by $\lceil \frac{n}{2} \rceil$, and a generalised 4-point conjecture by Pachter and Sturmfels. In addition, we show that the phylogenetic rank is subadditive under $1$-sums and certain $2$-vertex-sums, and that it is trivially upper bounded by $n-1$, where $n$ denotes the number of vertices. We also provide a complete classification of graphs with phylogenetic rank 1 and construct several infinite families with phylogenetic rank $\lceil \frac{n}{2} \rceil$.  
\end{abstract}

\maketitle

\section{Introduction}

The \emph{phylogenetic rank} of a finite simple connected graph $G$, denoted $\prank(G)$, is the minimum number $k$ necessary to isometrically embed $G$ into a product of metric trees $\Gamma_1\times\dots\times\Gamma_k$.  Here, isometric embedding means mapping the vertices of $G$ into $\Gamma_1\times\dots\times\Gamma_k$ such that distances between vertices are preserved, where edges of $G$ all have length $1$, edges of $\Gamma_i$ may have arbitrary length, and the product $\Gamma_1\times\dots\times\Gamma_k$ is endowed with the supremum of the component metrics.

This question is motivated in two ways:  Our main motivation comes from non-Archimedean optimisation, which is a new framework for optimising data over a product of metric trees \cite{LFMR2026}.  In non-Archimedean optimisation, $\prank(G)$ is the minimal embedding dimension of $G$ regarded as a dataset.  The name \emph{phylogenetic rank} originates from the highly cited work of Speyer and Sturmfels on tropical mathematics \cite[Section ``Phylogenetics'']{SpeyerSturmfels2009}, in the algebraic statistics book by Pachter and Sturmfels it is also referred to as \emph{tree rank} \cite[Section 3.5]{AlgStatsForBio}.  In both works, it is used to describe a mixture of different evolutionary histories.

Isometric embeddings of graphs into graph products is a classical topic in the intersection of combinatorics and metric geometry: \cite{djokovic1973hypercubeSubgraphs} classifies isometric subgraphs of hypercubes $\{0,1\}^k$, and \cite{winkler1984embedKn} extends the theory to products of complete graphs $K_n^k$ endowed with the Hamming metric ($\ell^1$).  The minimal number of factors needed for such an embedding is called the \emph{isometric dimension} \cite{GrahamWinkler1985}.  In contrast, the minimal number of factors needed for an embedding into a product of paths $P_n^k$ endowed with the supremum metric ($\ell^\infty$) is the \emph{strong isometric dimension} \cite{FitzpatrickNowakowski2000}.  For a more comprehensive overview, we refer the reader to \cite{HIK11}.

A natural generalisation is the embedding of arbitrary finite metric spaces into $\RR^n$ endowed with the supremum norm.  A foundational result by Wolfe states that any finite metric space with $n$ points embeds isometrically into $\RR^{n-2}$ \cite{wolfe1967imbedding}.  An asymptotic strengthening of Wolfe's result can be found in \cite{Petrov2010EmbeddingsIntoSupLines}.  Since $\RR$ is a metric tree, Wolfe's result yields the upper bound $\prank(G) \leq n-2$ for any graph $G$ on $n$ vertices.  However, these upper bounds are not necessarily tight in general as a metric tree can encode richer distance information than $\mathbb{R}$. As a simple example, if $G$ is a tree with $m$ leaves then $\prank(G) = 1$ but requires $d = O(\log m)$ dimensions to embed into $\RR^d$ \cite{Linial1995geometry}.

% Of course, embeddings of graphs is generally a well studied topic. \yue{TODO: elaborate on prior work.} \ollie{A couple of very old papers with some connection: \cite{djokovic1973hypercubeSubgraphs} classifies `isometric subgraphs' (see Definition~\ref{def: isometric subgraph}) of the hypercube - i.e. subgraphs whose distance matrix is a submatrix of the distance matrix of the whole graph. \cite{winkler1984embedKn} studies embeddings of graphs into a product of complete graphs. In Winkler's setup, the product of complete graphs has the $1$-norm (and not our sup-norm), so it has more to do with \textit{Hamming distances}.}  \ollie{Here's a paper \cite{Petrov2010EmbeddingsIntoSupLines} about embedding finite metric spaces into $\ell_\infty^n$, which connects to us because any such embedding is also a tree embedding where the tree embeddings are path graphs. However, for them, metric tree spaces cannot be embedded into a one-dimensional space so their results are just upper bounds on the phylogenetic rank. But they reference a paper of D. Wolfe called ` Imbedding a finite metric set in an N-dimensional Minkowski space', which shows that a finite metric space with $n$ points embeds into $\ell_\infty^{n-2}$. (I'm struggling to get a copy of this paper.)}

\subsection*{Outline and main results of the paper}
In \cref{sec:background}, we recapitulate the definition of phylogenetic rank and describe some of its properties including: an upper bound of $n-1$ for $n$-vertex graphs (\cref{prop: phylogenetic rank less than n-1}), monotonicity under isometric subgraphs (\cref{prop: isometric subgraph rank bound}) and graph substitutions (\cref{prop:graphSubstitution}), and subadditivity under $1$-sums (\cref{prop: separable graph rank}) and joining paths to edges (\cref{prop:joiningPath}).

In \cref{sec:examples}, we go over various examples of graphs with known phylogenetic rank, which are: several infinite families of graphs attaining phylogenetic rank $\lceil \frac{n}{2}\rceil$ (\cref{prop: cycle has tree embedding number n/2} and \cref{thm: examples with rank n/2}), complete bipartite graphs which all have phylogenetic rank $2$ (\cref{prop:bipartiteGraph}) and a complete classification of all graphs with phylogenetic rank $1$ (\cref{thm: phylogenetic rank one}).

In Sections \ref{sec:heuristicAlgorithm} and \ref{sec:exactAlgorithm}, we introduce a greedy algorithm (efficient, but doesn't always provide the optimal value) and an exact algorithm (far less efficient) for computing phylogenetic ranks.  The greedy \cref{alg:greedyAlgorithm} grows the product of metric trees iteratively whilst using the distance matrix as a checklist and linear optimisation to prioritise growth candidates.  For the exact \cref{alg:exactAlgorithm} we study so-called \emph{locally antipodal pairs}, or \emph{laps} for short, which are pairs of vertices that are local maxima of the distance function.  We reformulate \cref{alg:exactAlgorithm} as three simple poset traversals (\cref{rem:exactAlgorithm}) and explore criteria that speed up \cref{alg:exactAlgorithm} and are also useful for \cref{sec:examples} (\cref{thm:compatiblePairs} and \cref{cor:compatiblePairs}).

In \cref{sec:computations}, we discuss various computations we have done with \cref{alg:greedyAlgorithm} and \cref{alg:exactAlgorithm}.  In particular, we present a database of graphs and their phylogenetic ranks covering all bi-connected graphs up to $7$ vertices exactly and all graphs up to $8$ vertices approximately, showing in particular that the upper bound $\prank(G) \leq n-2$ is not tight for small graphs.  Moreover, we exhibit examples disproving that the phylogenetic rank: is hereditary, is bounded by $\lceil\frac{n}{2}\rceil$, and satisfies a generalised $4$-point conjecture by Pachter and Sturmfels \cite[Section 3.5]{AlgStatsForBio} \cite[Section ``Phylogenetics'']{SpeyerSturmfels2009}.

\subsection*{Acknowledgements}

Giansiracusa, Clarke, and Ren are supported by the EPSRC grant ``Mathematical Foundations of Intelligence: An 'Erlangen Programme' for AI'' (EP/Y028872/1).  Ren is supported by the UKRI Future Leaders Fellowship ``Computational Tropical Geometry and its Applications'' (MR/S034463/2).

%%% Local Variables:
%%% ispell-local-dictionary: "en_GB"
%%% mode: LaTeX
%%% TeX-master: "graph_embedding_paper"
%%% End:

\section{Phylogenetic rank and its properties}\label{sec:background}

In this section we fix our main conventions for graphs and finite metric spaces. We recall the notion of \textit{phylogenetic rank} (also known as \textit{tree rank} \cite[Section~3.5]{AlgStatsForBio}) and give some elementary properties.

\subsection{Phylogenetic rank}

\begin{convention}
  \label{con:main}
  Throughout the paper, let $G=(V,E)$ denote an undirected, finite, simple, and connected graph with vertex set $V$ and edge set $E\subseteq\binom{V}{2}$.  We consider $G$ as a finite metric space $(V,d_G)$ where
  \begin{equation*}
    d_G(v,v')\coloneqq\min\left\{k\in\ZZ_{\geq 0}\bigmid
      \begin{array}{l}
        \text{there are } v_0,\dots,v_k\in V \text{ with } v_0=v, v_k=v',\\
        \text{and } \{v_{i-1},v_i\}\in E\text{ for all }i=1,\dots,k
      \end{array}
    \right\},
  \end{equation*}   
i.e., each edge has length 1.

We will also consider metric trees, denoted  $\Gamma$ (and occasionally $\Lambda$) where the edge lengths are arbitrary positive real numbers,
i.e., $\Gamma=(V_\Gamma,E_\Gamma,d_\Gamma)$ for some vertex set $V_\Gamma$, edge set $E_\Gamma\subseteq\binom{V_\Gamma}{2}$, and metric $d_\Gamma\colon V_\Gamma\times V_\Gamma\rightarrow\RR_{\geq 0}$.  Unlike $G$, we will consider $\Gamma$ as a one-dimensional metric space, where an edge $\{v,v'\}\in E_\Gamma$ is isometric to a real interval of length $d_\Gamma(v,v')$ \cite[Section 3.2]{BBI2001}.
  This inconsistency is purely for the sake of convenience, so that when given a map $\varphi\colon G\rightarrow \Gamma$ and a graph extension $G\subseteq G'$, we may extend $\varphi$ to $\varphi'\colon G'\rightarrow \Gamma$ by mapping extra vertices of $G'$ to points on the boundary or interior of edges of $\Gamma$.

  Finally, we will often consider product of metric trees $\Gamma_1\times\dots\times\Gamma_k$ which we will generally endow with the supremum norm:
  \begin{equation*}
    d_{\Gamma_1\times\dots\times\Gamma_k}\Big( (x_1,\dots,x_k), (y_1,\dots,y_k)\Big) \coloneqq\sup_{i=1,\dots,k} d_{\Gamma_i}(x_i,y_i).
  \end{equation*}
\end{convention}

\begin{definition}
  Let $G$ be a graph metric space.  A \emph{tree-embedding} is an isometric embedding $\iota=(\iota_1,\dots,\iota_k)\colon G \rightarrow T_1 \times \dots \times T_k$ where each $T_i$ is a metric tree, and we say that $\iota$ is \textit{minimal}, if $(\iota_1,\dots,\hat\iota_\ell,\dots,\iota_k)\colon G \rightarrow \Gamma_1 \times \dots \times\hat\Gamma_\ell\times\dots \times \Gamma_k$ is not an isometric embedding for each $\ell \in [k]$.

  The \emph{phylogenetic rank} $\prank(G)$ is the minimal $k\in\ZZ_{\geq 0}$ such that there exist metric trees $\Gamma_1, \ldots, \Gamma_k$ and an isometric embedding $\iota\colon G \rightarrow \Gamma_1 \times \dots \times \Gamma_k$ \cite[Section~3.5]{AlgStatsForBio}, in which case we say that $\iota$ \textit{attains the rank} or is \textit{globally minimal}.
\end{definition}

% \begin{remark}\label{rmk: minimal tree embedding trivial extensions}
%   A minimal tree embedding $\iota : X \rightarrow T_1 \times \dots \times T_k$ admits a large family of trivial variations obtained by embedding each $T_i$ into a larger tree. We avoid dealing with these cases by tacitly assuming that the leaves of each tree are the image of some point in $X$.
% \end{remark}

We illustrate these definitions with some examples.

\begin{example}
  If $G$ is a tree then the identity map is a tree embedding, so $\prank(G) = 1$.
\end{example}

\begin{example}\label{example: Kn has rank 1}
  If $G=K_n$ is the complete graph on $n$ vertices, pick $\Gamma=K_{1,n}$ to be the star graph whose edges all have length $1/2$ and $\iota\colon K_n \rightarrow K_{1,n}$ to be any map that maps the vertices of $K_n$ to the leaves of $K_{1,n}$. This is an isometric embedding and thus shows that $\prank(K_n)=1$.
\end{example}

\begin{figure}
  \begin{tikzpicture}[
    vtx/.style={circle, fill, inner sep=1.6pt},
    every label/.style={font=\small\linespread{0.9}\selectfont, align=center, label distance=2pt}
    ]
    %%% left: the 3-prism %%%
    \node[vtx, label={180:1}] (v1) at (1,1.0) {};
    \node[vtx, label={180:2}] (v2) at (1,-1.0) {};
    \node[vtx, label={90:3}]  (v3) at (2.2,0) {};
    \node[vtx, label={0:4}]  (v4) at (4.5,1.0) {};
    \node[vtx, label={0:5}] (v5) at (4.5,-1.0) {};
    \node[vtx, label={90:6}]  (v6) at (3.3,0) {};
    \draw (v1) -- (v2) -- (v3) -- (v1);
    \draw (v4) -- (v5) -- (v6) -- (v4);
    \draw (v1) -- (v4);  \draw (v2) -- (v5);  \draw (v3) -- (v6);

    %%% right: three paths side by side, tags above %%%
    \begin{scope}[shift={(5.2,0.2)}]
      % --- \Gamma_1 ---
      \def\labelsA{1/{1}, 2/{2\\3\\4}, 3/{5\\6}}
      \foreach \i/\lab in \labelsA {
        \node[vtx, label={below:\lab}] (p1-\i) at (\i*0.9, 0) {};
      }
      \draw (p1-1) -- (p1-2) -- (p1-3);
      \node[font=\normalsize] at (1.8, 0.7) {$\Gamma_{1}$};

      % --- \Gamma_2 ---
      \def\labelsB{1/{3}, 2/{1\\2\\6}, 3/{4\\5}}
      \foreach \i/\lab in \labelsB {
        \node[vtx, label={below:\lab}] (p2-\i) at (\i*0.9+3.2, 0) {};
      }
      \draw (p2-1) -- (p2-2) -- (p2-3);
      \node[font=\normalsize] at (5.0, 0.7) {$\Gamma_{2}$};

      % --- \Gamma_3 ---
      \def\labelsC{1/{2}, 2/{1\\3\\5}, 3/{4\\6}}
      \foreach \i/\lab in \labelsC {
        \node[vtx, label={below:\lab}] (p3-\i) at (\i*0.9+6.4, 0) {};
      }
      \draw (p3-1) -- (p3-2) -- (p3-3);
      \node[font=\normalsize] at (8.2, 0.7) {$\Gamma_{3}$};
    \end{scope}
  \end{tikzpicture}\vspace{-3mm}
  \caption{Minimal tree embedding of the three-prism from Example~\ref{example: prism}.}
  \label{fig: prism with embed}
\end{figure}

\begin{example}\label{example: prism}
  Let $G = (V, E)$ be the $3$-prism depicted on the left of Figure~\ref{fig: prism with embed}, i.e., $V=[6]$ and $E= \{12,\, 13,\, 23,\, 14,\, 25,\, 45,\, 36,\, 46,\, 56\}$.
  Then there is a tree-embedding $\iota=(\iota_1,\iota_2,\iota_3)\colon G \rightarrow \Gamma_1 \times \Gamma_2 \times \Gamma_3$ as shown on the right of Figure~\ref{fig: prism with embed}. For instance, the map $\iota$ sends $1\in V$ to the left-most vertex of the first path $\Gamma_1$ and the middle vertex of the second and third paths $\Gamma_2$ and $\Gamma_3$. To see that this is indeed a tree embedding, consider the distance matrices $D_G, D_{\iota_1(G)}, D_{\iota_2(G)}, D_{\iota_3(G)}$ of $G, \iota_1(G), \iota_2(G), \iota_3(G)$ respectively, and notice that for $v_1,v_2\in V$ we have $(D_G)_{v_1,v_2}=\max((D_{\iota_1(G)})_{v_1,v_2},(D_{\iota_2(G)})_{v_1,v_2},(D_{\iota_3(G)})_{v_1,v_2})$:
  \begin{center}
    \begin{tikzpicture}
      \node (DG)
      {%
        $
        \setlength{\arraycolsep}{4pt}
        \begin{pNiceMatrix}[first-row, first-col]
          & 1 & 2 & 3 & 4 & 5 & 6 \\
          1 & 0 & \textcolor{blue!90!black}1 & \textcolor{blue!90!black}1 & \textcolor{blue!90!black}1 & \textcolor{blue!90!black}2 & \textcolor{blue!90!black}2 \\
          2 & \textcolor{blue!90!black}1 & 0 & \textcolor{blue!90!black}1 & \textcolor{blue!90!black}2 & \textcolor{blue!90!black}1 & \textcolor{blue!90!black}2 \\
          3 & \textcolor{blue!90!black}1 & \textcolor{blue!90!black}1 & 0 & \textcolor{blue!90!black}2 & \textcolor{blue!90!black}2 & \textcolor{blue!90!black}1 \\
          4 & \textcolor{blue!90!black}1 & \textcolor{blue!90!black}2 & \textcolor{blue!90!black}2 & 0 & \textcolor{blue!90!black}1 & \textcolor{blue!90!black}1 \\
          5 & \textcolor{blue!90!black}2 & \textcolor{blue!90!black}1 & \textcolor{blue!90!black}2 & \textcolor{blue!90!black}1 & 0 & \textcolor{blue!90!black}1 \\
          6 & \textcolor{blue!90!black}2 & \textcolor{blue!90!black}2 & \textcolor{blue!90!black}1 & \textcolor{blue!90!black}1 & \textcolor{blue!90!black}1 & 0
        \end{pNiceMatrix}
        $
      };
      \node[below] at (DG.south) {$\eqqcolon D_G$};
      \node[anchor=west,xshift=0mm] (DT1) at (DG.east)
      {%
        $
        \setlength{\arraycolsep}{4pt}
        \begin{pNiceMatrix}[first-row,first-col]
          & 1 & 2 & 3 & 4 & 5 & 6 \\
          & 0 & \textcolor{blue!90!black}1 & \textcolor{blue!90!black}1 & \textcolor{blue!90!black}1 & \textcolor{blue!90!black}2 & \textcolor{blue!90!black}2 \\
          & \textcolor{blue!90!black}1 & 0 & 0 & 0 & \textcolor{blue!90!black}1 & 1 \\
          & \textcolor{blue!90!black}1 & 0 & 0 & 0 & 1 & \textcolor{blue!90!black}1 \\
          & \textcolor{blue!90!black}1 & 0 & 0 & 0 & \textcolor{blue!90!black}1 & \textcolor{blue!90!black}1 \\
          & \textcolor{blue!90!black}2 & \textcolor{blue!90!black}1 & 1 & \textcolor{blue!90!black}1 & 0 & 0 \\
          & \textcolor{blue!90!black}2 & 1 & \textcolor{blue!90!black}1 & \textcolor{blue!90!black}1 & 0 & 0
        \end{pNiceMatrix}
        $
      };
      \node[below] at (DT1.south) {$\eqqcolon D_{\iota_1(G)}$};
      \node[anchor=west,xshift=0mm] (DT2) at (DT1.east)
      {%
        $
        \setlength{\arraycolsep}{4pt}
        \begin{pNiceMatrix}[first-row,first-col]
          & 1 & 2 & 3 & 4 & 5 & 6 \\
          & 0 & 0 & \textcolor{blue!90!black}1 & \textcolor{blue!90!black}1 & 1 & 0 \\
          & 0 & 0 & \textcolor{blue!90!black}1 & 1 & \textcolor{blue!90!black}1 & 0 \\
          & \textcolor{blue!90!black}1 & \textcolor{blue!90!black}1 & 0 & \textcolor{blue!90!black}2 & \textcolor{blue!90!black}2 & \textcolor{blue!90!black}1 \\
          & \textcolor{blue!90!black}1 & 1 & \textcolor{blue!90!black}2 & 0 & 0 & \textcolor{blue!90!black}1 \\
          & 1 & \textcolor{blue!90!black}1 & \textcolor{blue!90!black}2 & 0 & 0 & \textcolor{blue!90!black}1 \\
          & 0 & 0 & \textcolor{blue!90!black}1 & \textcolor{blue!90!black}1 & \textcolor{blue!90!black}1 & 0
        \end{pNiceMatrix}
        $
      };
      \node[below] at (DT2.south) {$\eqqcolon D_{\iota_2(G)}$};
      \node[anchor=west,xshift=0mm] (DT3) at (DT2.east)
      {%
        $
        \setlength{\arraycolsep}{4pt}
        \begin{pNiceMatrix}[first-row,first-col]
          & 1 & 2 & 3 & 4 & 5 & 6 \\
          & 0 & \textcolor{blue!90!black}1 & 0 & \textcolor{blue!90!black}1 & 0 & 1 \\
          & \textcolor{blue!90!black}1 & 0 & \textcolor{blue!90!black}1 & \textcolor{blue!90!black}2 & \textcolor{blue!90!black}1 & \textcolor{blue!90!black}2 \\
          & 0 & \textcolor{blue!90!black}1 & 0 & 1 & 0 & \textcolor{blue!90!black}1 \\
          & \textcolor{blue!90!black}1 & \textcolor{blue!90!black}2 & 1 & 0 & \textcolor{blue!90!black}1 & 0 \\
          & 0 & \textcolor{blue!90!black}1 & 0 & \textcolor{blue!90!black}1 & 0 & \textcolor{blue!90!black}1 \\
          & 1 & \textcolor{blue!90!black}2 & \textcolor{blue!90!black}1 & 0 & \textcolor{blue!90!black}1 & 0
        \end{pNiceMatrix}
        $
      };
      \node[below] at (DT3.south) {$\eqqcolon D_{\iota_3(G)}$};
    \end{tikzpicture}
  \end{center}
  The explicit embedding shows $\phylogeneticrank(G)\leq 3$.  The fact that $\phylogeneticrank(G)=3$ follows from \cref{thm: examples with rank n/2} since $G=K_6 \setminus C_6$.
\end{example}

We will use the following construction repeatedly throughout the paper to construct tree embeddings:

\begin{definition}
  Let $G$ be a graph and $v \in V$ a vertex.  The \emph{path map at $v$} is the 1-Lipschitz map $p_v\colon G \to \RR$ defined by sending $v$ to the origin and any vertex $v'$ to its distance from $v$.  The image of $p_v$ is the interval that is the convex hull of the set of distance, $D_v \coloneqq \{d_G(v,v') \mid v' \in V \}$.  We regard this interval as a metric tree $\Gamma_v$ with vertex set $D_v$.
\end{definition}

% \begin{proposition}\label{prop: path map is 1-Lipschitz}
%   The path map $p_v\colon G \rightarrow \Gamma_v$ is $1$-Lipschitz and $\Gamma_v$ is a tree metric.
% \end{proposition}

% \begin{proof}
%   Fix $a, b \in V$. By the reverse triangle inequality we have
%   \[
%     d(p_v(a),p_v(b)) = |d_V(a, v) - d_V(v, b)| \le d_G(a, b),
%   \]
%   so $p_v$ is $1$-Lipschitz. To see that $D_v$ is a tree metric, observe that it is a subspace of $\RR$, which is a tree. Alternatively, for any four points $a \le b \le c \le d \in D_v$ we have
%   \[
%     d(a,b) + d(c,d) \le d(a,c) + d(b,d) = d(a,d) + d(b,c).
%   \]
%   So $D_v$ satisfies the four point condition, so it is a tree metric.
% \end{proof}

As a straightforward application of path maps, we obtain the following upper bound of the phylogenetic rank:

\begin{proposition}\label{prop: phylogenetic rank less than n-1}
  Let $G$ be a graph, then $\prank(G) \le |V|-1$.
\end{proposition}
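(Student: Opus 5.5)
The plan is to use path maps at all vertices but one. Write $V=\{v_1,\dots,v_n\}$ and consider the map
\[
\iota=(p_{v_1},\dots,p_{v_{n-1}})\colon G\to\Gamma_{v_1}\times\dots\times\Gamma_{v_{n-1}}.
\]
Each $\Gamma_{v_i}$ is an interval, hence a metric tree. So it suffices to show that $\iota$ is an isometric embedding with respect to the supremum metric. For a pair of vertices $a,b$ this amounts to two inequalities: $\max_i |d_G(v_i,a)-d_G(v_i,b)|\le d_G(a,b)$, and the maximum is attained with equality.

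For the upper bound, each path map $p_v$ is $1$-Lipschitz by the reverse triangle inequality, $|d_G(v,a)-d_G(v,b)|\le d_G(a,b)$. Hence the supremum over the coordinates is at most $d_G(a,b)$.

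For the lower bound, fix distinct $a,b\in V$. At least one of $a,b$ lies in $\{v_1,\dots,v_{n-1}\}$, since only $v_n$ is omitted. Say $a=v_i$. Then the $i$-th coordinate gives
\[
|p_{v_i}(a)-p_{v_i}(b)|=|0-d_G(a,b)|=d_G(a,b),
\]
so equality is attained. Together the two inequalities give $d(\iota(a),\iota(b))=d_G(a,b)$. In particular $\iota$ is injective, and therefore $\prank(G)\le n-1$.

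No step is a genuine obstacle. The only point needing care is the observation that omitting a single path map still leaves every pair covered by one of its own endpoints. The degenerate case $n=1$ gives rank $0$ with the empty product, which is consistent with the bound.
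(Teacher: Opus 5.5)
Your proposal is correct and follows the paper's proof: you take the product of path maps $p_v$ over all vertices except one, get the $1$-Lipschitz bound from each coordinate, and note that every pair $a,b$ has an endpoint among the retained vertices, whose own path map realises $d_G(a,b)$. Your remark on the degenerate case $n=1$ is a harmless addition the paper omits.
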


\begin{proof}
  % For each $x \in X$, we construct a metric tree $T_x$ homeomorphic to a line segment. The vertex set of $T_x$ is $V(T_x) := \{d_X(x, y) : y \in X \} \subset \RR_{\ge 0}$ and we directly define the metric $d_{T_x}(a, b) = |a - b|$. It is straightforward to check that this is indeed the metric graph whose underlying graph is a path and whose edges are the pairs $\{a,b\} \in \binom{V(T_x)}{2}$ such that there exists no $c \in V(T_v)$ that lies strictly between $a$ and $b$.
  Let $V'$ be the set $V$ with one point removed. Then the product of path maps $\iota \coloneqq (p_v)_{v \in V'}\colon V \rightarrow \prod_{v \in V'} \Gamma_v$ is a tree embedding:  Each $p_v$ is $1$-Lipschitz, and hence $\iota$ is $1$-Lipschitz.  And for any $a, b \in V$, say $a \in V'$, we have $d_{\Gamma_a}(p_a(a),p_a(b)) = |d_G(a,a) - d_G(a,b)| = d_G(a,b)$.  Combining both, we obtain $d(\iota(a),\iota(b)) = d_G(a,b)$ and thus $\iota$ is an isometry.
\end{proof}

We close the section with some references to literature:

\begin{remark}\
  \begin{enumerate}
    \item In \cite{AlgStatsForBio}, the space of metrics on $n$ points with rank at most $r$ is denoted $\mathcal T_n^r$. Each such metric lies in a product $\Gamma_1\times\dots\times\Gamma_k$ equipped with the supremum norm, which is referred to as a \textit{mixture} of tree metrics. Pachter and Sturmfels highlight that understanding the structure of the space $\mathcal T_n^r$ could be used in comparative genomics to obtain a consensus among conflicting tree metrics. 
    
    \item 
    % \ollie{Check with Julio's notes that this is the Fr\'echet embedding and find a reference}
    Proposition~\ref{prop: phylogenetic rank less than n-1} has been improved and generalised as follows. Wolfe \cite{wolfe1967imbedding} showed that $G$ embeds into $\RR^{|V|-2}$ with the supremum norm. And in general, it has been shown in \cite{Petrov2010EmbeddingsIntoSupLines} that, for any fixed $c \in \ZZ_{> 0}$, there exists $n_0$ such that any graph $G$ with $n \geq n_0$ vertices has phylogenetic rank $\prank(G) \leq n-c$.
    
    \item Proposition~\ref{prop: phylogenetic rank less than n-1} can also be proven using tropical algebraic geometry by exploiting the connection between metric trees and tropical Grassmannians \cite{SpeyerSturmfels2004}:  By Terracini's Lemma \cite[Section~5.3]{landsberg2011tensors}, the $(n-1)$-th secant variety of Grassmannian ${\rm Gr}(2,n)$ under the Pl\"ucker embedding fills the ambient space. Tropicalising this tells us that the $(n-1)$-fold product of tree metrics can realise any metric (including metrics arising from graphs).
  \end{enumerate}
\end{remark}

\subsection{Monotonicity of phylogenetic rank}
In this section we study the phylogenetic rank under certain graph operations.

%
%%% Hereditary property is not correct %%%
%

% \begin{proposition}
%   Let $H$ be a connected induced subgraph of $G$, then $\prank(H) \le \prank(G)$.
% \end{proposition}

% \begin{proof}
%   \textcolor{red}{The proof in the previous file is not correct because the metric of an induced subgraph can be completely different to that of the original graph! (But, I (Ollie) haven't seen any examples where the rank increases by taking induced subgraphs (yet))}
% \end{proof}

% \begin{proposition}
%   Fix a positive integer $r$. The set of graphs $G$ with $\prank(G) \le r$ form a \textit{connected-hereditary class}, i.e., a set of graphs closed under taking induced subgraphs that are connected. Moreover, this hereditary class has among its minimal forbidden induced subgraphs the cycles $C_i$ for $i \ge \max\{4, 2r+1\}$.
% \end{proposition}

% \begin{proof}
%   \textcolor{red}{Requires us to know cycles have rank $n/2$}
% \end{proof}

\begin{definition}\label{def: isometric subgraph}
  Let $G$ be a graph and $H$ a subgraph. We say that $H$ is an \textit{isometric subgraph} if the inclusion $H \to G$ is an isometry.  Note that any isometric subgraph is an induced subgraph.
\end{definition}

\begin{proposition}\label{prop: isometric subgraph rank bound}
  Let $H$ be an isometric subgraph of $G$, then $\prank(H) \le \prank(G)$.
\end{proposition}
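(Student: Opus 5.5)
The plan is to restrict a rank-attaining tree-embedding of $G$ to the vertices of $H$; the whole argument is then just composition of isometries. Let $k = \prank(G)$ and fix an isometric embedding $\iota = (\iota_1,\dots,\iota_k)\colon G \to \Gamma_1 \times \dots \times \Gamma_k$ into a product of metric trees with the supremum metric, which exists by definition of the phylogenetic rank. Write $j\colon H \to G$ for the inclusion. By \cref{def: isometric subgraph}, $j$ is an isometry, meaning $d_G(j(a),j(b)) = d_H(a,b)$ for all vertices $a,b$ of $H$.

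Next, consider the composite $\iota \circ j\colon H \to \Gamma_1 \times \dots \times \Gamma_k$. For any vertices $a,b$ of $H$,
\begin{equation*}
  d_{\Gamma_1\times\dots\times\Gamma_k}\bigl(\iota(j(a)),\iota(j(b))\bigr) = d_G(j(a),j(b)) = d_H(a,b).
\end{equation*}
The first equality holds because $\iota$ is an isometry, and the second because $H$ is an isometric subgraph. Hence $\iota\circ j$ is an isometric embedding of $H$ into a product of $k$ metric trees, so it is a tree-embedding with $k$ factors. Taking the minimum over all tree-embeddings of $H$ gives $\prank(H) \le k = \prank(G)$.

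There is no real obstacle. The only point needing care is that $H$ carries its own graph metric $d_H$, and this could a priori differ from the restriction of $d_G$ to the vertices of $H$. This is exactly where the isometric-subgraph hypothesis is used. For an arbitrary induced subgraph the restricted embedding would realise $d_G|_H$ rather than $d_H$, and the argument would break down; this is consistent with the paper's remark that the phylogenetic rank is not hereditary.
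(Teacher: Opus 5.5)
Your proposal is correct and takes the same approach as the paper, whose one-line proof notes that restricting a rank-attaining tree-embedding of $G$ to $H$ gives a tree-embedding of $H$; your composite $\iota\circ j$ just spells this out. Your remark on why the isometric-subgraph hypothesis is needed, and not mere inducedness, is also accurate and matches the paper's example showing that the phylogenetic rank is not hereditary.
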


\begin{proof}
  The restriction of a tree embedding of $G$ to $H$ is a tree embedding.
\end{proof}

% \subsection{Separable graphs and clique substitutions}
For $W \subset V$, $G[W]$ denotes the induced subgraph on vertex set $W$. Recall that a graph $G = (V, E)$ is \textit{separable} if there exists a vertex $v \in V$ such that the induced subgraph $G[V \setminus \{v\}]$ is disconnected. In this case, we call $v$ a \textit{cut vertex} of $G$.  The following proposition shows that, for computing phylogenetic ranks of graphs, it suffices to consider non-separable graphs.

\begin{proposition}\label{prop: separable graph rank}
  Let $G$ be a separable graph with cut vertex $v$ and $C_1, \dots, C_k$ the connected components of $G[V \setminus \{v\}]$. For each $i \in [k]$, define $D_i = G[V(C_i) \cup \{v\}]$, then
  $
  \prank(G) = \max\{\prank(D_i) \mid i \in [k]\}
  $.
\end{proposition}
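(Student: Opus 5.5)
The equality splits into two inequalities, and I would prove them separately.

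For the lower bound $\prank(G)\ge\max_i\prank(D_i)$, I will show that each $D_i$ is an isometric subgraph of $G$ and then invoke \cref{prop: isometric subgraph rank bound}. Take $a,b\in V(D_i)$. Any path in $G$ from $a$ to $b$ that leaves $D_i$ must exit and re-enter $C_i\cup\{v\}$. Since $v$ is a cut vertex, any such excursion passes through $v$ twice. Deleting the excursion therefore gives a path of no greater length inside $D_i$. Hence $d_{D_i}=d_G$ on $V(D_i)$, which is what we need.

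For the upper bound, set $r=\max_i\prank(D_i)$. For each $i$, pick a tree embedding $\iota^{(i)}=(\iota^{(i)}_1,\dots,\iota^{(i)}_{r})\colon D_i\to\Gamma^{(i)}_1\times\dots\times\Gamma^{(i)}_r$. If $D_i$ needs fewer than $r$ factors, I pad with constant maps to a one-point tree; this does not change the sup-metric. For each coordinate $j\in[r]$, glue the trees $\Gamma^{(1)}_j,\dots,\Gamma^{(k)}_j$ along the points $\iota^{(i)}_j(v)$ to form a wedge tree $\Gamma_j$. A wedge of trees at single points is again a metric tree. Define $\iota_j\colon V\to\Gamma_j$ by $\iota_j|_{D_i}=\iota^{(i)}_j$; this is well defined at $v$ because all copies of $v$ are identified. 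Set $\iota=(\iota_1,\dots,\iota_r)$.

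It remains to check that $\iota$ is an isometry. For $a,b$ in the same $D_i$ this holds by construction, since $\Gamma^{(i)}_j$ is an isometrically embedded subtree of $\Gamma_j$. The main step is the case $a\in C_i$, $b\in C_l$ with $i\ne l$. Here $d_G(a,b)=d_G(a,v)+d_G(v,b)$ because every path between them passes through $v$. In each wedge tree the geodesic from $\iota_j(a)$ to $\iota_j(b)$ also passes through the wedge point $\iota_j(v)$, so
\[
d_{\Gamma_j}(\iota_j(a),\iota_j(b))=d(\iota_j(a),\iota_j(v))+d(\iota_j(v),\iota_j(b)).
\]
Each summand is at most the corresponding graph distance, because $\iota^{(i)}$ and $\iota^{(l)}$ are $1$-Lipschitz. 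So the sup over $j$ is at most $d_G(a,b)$.

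The obstacle is the reverse inequality: the coordinate $j$ attaining $d_G(a,v)$ might differ from the coordinate attaining $d_G(v,b)$. My first attempt is to normalise each $\iota^{(i)}$ so that a single coordinate, say $j=1$, realises all distances from $v$ within $D_i$. Concretely, I would replace $\Gamma^{(i)}_1$ by the tree obtained by gluing the path-map interval $\Gamma_v$ (via $p_v$) to the first factor. This is not obviously possible without raising the rank, so instead I would modify the gluing itself. In coordinate $1$, attach to $\iota^{(i)}_1(a)$ for each $a\in C_i$ a pendant edge whose length makes its endpoint at distance exactly $d_G(a,v)$ from the wedge point, and send $a$ to that endpoint. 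This is legitimate only if the pendant length is nonnegative, which holds since $d_{\Gamma_1}(\iota_1(a),\iota_1(v))\le d_G(a,v)$.

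I then need to recheck distances within $D_i$ after this modification; pendant edges only increase distances. This is the delicate point. If it fails, my fallback is a cleaner choice: use a coordinate where the two $D$-embeddings are composed with a path map at $v$ shifted onto different branches of a star. Either way, the cross-component distance is then attained in coordinate $1$ and equals $d_G(a,v)+d_G(v,b)=d_G(a,b)$.
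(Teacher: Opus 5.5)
Your lower bound is correct and is the paper's argument. For the upper bound you have found exactly the right difficulty, but neither repair resolves it. For $a\in C_i$, $b\in C_l$ with $i\neq l$, a coordinate $j$ attains $d_G(a,b)=d_G(a,v)+d_G(v,b)$ only if it attains \emph{both} $d_G(a,v)$ and $d_G(v,b)$. This is because the chain $d_{\Gamma_j}(\iota_j(a),\iota_j(b))\le d_{\Gamma_j}(\iota_j(a),\iota_j(v))+d_{\Gamma_j}(\iota_j(v),\iota_j(b))\le d_G(a,v)+d_G(v,b)$ must then be tight.

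The pendant-edge modification destroys the $1$-Lipschitz property inside $D_i$, because pendant lengths add up. Take the $4$-cycle $v,x,y,z$ (in cyclic order) with first coordinate the path map $p_x$. Then $y$ lands on the image of $v$, receives a pendant of length $2$, and ends up at distance $3>d_G(x,y)=1$ from $x$.

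The star of path maps at $v$ does realise all cross distances $1$-Lipschitzly, but only as an \emph{extra} coordinate. Using it to replace coordinate $1$ discards the distances inside the $D_i$ that only coordinate $1$ attained. So it gives $\prank(G)\le\max_i\prank(D_i)+1$, not the claim.

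No argument can close this gap, because the upper bound is false. The paper's proof builds the same coordinatewise wedge and asserts that cross-component distances are preserved ``by the construction''. It overlooks precisely the issue you flagged.

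Here is a counterexample. Let $H$ be the house on $\{1,\dots,5\}$ with edges $12,15,23,25,34,45$. Then $\prank(H)=2$: it contains the induced $4$-cycle $2345$, and $p_3\times p_4$ is a tree embedding.

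\emph{Claim.} In any two-coordinate tree embedding of $H$, no coordinate attains both $d(1,3)=2$ and $d(1,4)=2$. Suppose one did. By the four point condition it cannot attain $d(2,4)=2$: the sum $d(1,3)+d(2,4)=4$ would exceed the other two sums of tree distances, which are at most $d_G(1,2)+d_G(3,4)=2$ and $d_G(1,4)+d_G(2,3)=3$. Symmetrically it cannot attain $d(3,5)=2$. So the other coordinate would have to attain both $d(2,4)$ and $d(3,5)$, which is impossible on the $4$-cycle $2345$.

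Now glue two copies $H,H'$ at $v=1$, and suppose $G\to\Gamma_1\times\Gamma_2$ is a tree embedding. Let $P_j\subseteq\{3,4\}$ and $Q_j\subseteq\{3',4'\}$ be the vertices whose distance $2$ to $v$ is attained in coordinate $j$. By the observation above, $\{3,4\}\times\{3',4'\}\subseteq(P_1\times Q_1)\cup(P_2\times Q_2)$. If $P_2\neq\{3,4\}$, then any $a\in\{3,4\}\setminus P_2$ forces $Q_1=\{3',4'\}$. Either way, some coordinate, restricted to $H$ or to $H'$, attains both distances $2$ from the apex, contradicting the claim. Hence $\prank(G)=3>2$, with the upper bound coming from your star coordinate.

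A slightly sharper covering argument shows that gluing $H$ at $1$ to a $4$-cycle already gives an $8$-vertex example. The house at its apex is exactly a block for which your proposed normalisation is impossible.

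What is true is $\max_i\prank(D_i)\le\prank(G)\le\max_i\prank(D_i)+1$. Equality holds on the left whenever each $D_i$ embeds into $\max_i\prank(D_i)$ trees with one coordinate attaining all distances from $v$. In that case, wedging along that coordinate realises every cross distance, which is the hypothesis the wedge construction actually needs.
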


\begin{proof}
  Observe that $D_i$ is an isometric subgraph of $G$ for each $i \in [k]$. Explicitly, for any $x, y \in V(D_i)$ distinct, a shortest path from $x$ to $y$ in $H$ is also a shortest path in $G$. So by Proposition~\ref{prop: isometric subgraph rank bound}, we have
  \[
    \prank(G) \ge \max\{\prank({D_1}), \prank({D_2}), \dots, \prank({D_k})\}.
  \]
  To show equality, we give a construction. We begin by recalling the notion of a wedge-sum of spaces.

  Let $X_1, \dots, X_k$ be metric spaces and fix some $x_i \in X_i$ for each $i \in [k]$. The \textit{wedge sum} of $X_1, \dots, X_k$ at the points $x_1, \dots, x_k$ is defined to be the disjoint union $\bigsqcup_i X_i / \sim$ with points $x_1 \sim x_2 \sim \dots \sim x_k$ identified. The metric on the wedge product $X$ is obtained by extending the metrics on each of the summands by $d_X(u,v) \coloneqq d_{X_i}(u, x_i) + d_{X_j}(v, x_j)$ for any $u \in X_i$ and $v \in X_j$ with $i \neq j$.
    
  Without loss of generality, let us assume
  $\prank({D_1}) \ge \prank({D_2}) \ge \dots \ge \prank({D_k})$ and for each $i$ fix an embedding
  \[
    \iota_i\colon D_i \rightarrow \Gamma^i_1 \times \Gamma^i_2 \times \dots \times \Gamma^i_{\prank({D_i})}.
  \]
  We define $\Gamma^i_j \coloneqq \Gamma^i_{\prank({D_i})}$ for all $j \in \{\prank_{D_i}+1, \prank({D_i})+2 \dots, \prank({D_1})\}$.
  Now let us define
  \[
    \iota\colon G \rightarrow \Gamma_1 \times \Gamma_2 \times \dots \times \Gamma_{\prank({D_1})}
  \]
  where the tree $\Gamma_j$ is the wedge sum of the trees $\Gamma^i_j$ at the point $(\iota_i(v))_j \in \Gamma^i_j$, and the $j$-th component of the map $\iota$ sends a vertex $x \in V(D_i) \subseteq V(G)$ to $\iota_i(x) \in \Gamma^i_j \subseteq \Gamma_i$.
  Note that $\Gamma_j$ is a tree because it is a wedge of trees. Moreover, the map $\iota$ is a $1$-Lipschitz because it is $1$-Lipshitz on each subgraph $D_i$, which cover the graph. Let $x, y \in V(G)$ and consider a shortest path $P$ from $x$ to $y$. If $x \in V(D_i)$ and $y \in (D_j)$ with $i \neq j$, then $P$ must pass through the vertex $v$. So
  \[
    d_G(x,y) = d_G(x,v) + d_G(y,v) = d_{D_i}(x,v) + d_{D_j}(y,v).
  \]
  So, by the construction of $\Gamma_1 \times \dots \times \Gamma_{r_{D_1}}$, the length of the path $P$ is preserved. So we have constructed a rank $r_{D_1}$ tree embedding of $G$. This concludes the proof.
\end{proof}

\begin{definition}
  Let $G$ and $H$ be graphs on disjoint vertex sets and $v \in V(G)$ a vertex. The \textit{graph substitution $G[v \rightarrow H]$} is the graph obtained by removing $v$ from $G$ and replacing it with a copy of $H$ such that each vertex of $H$ is connected to the neighbourhood of $v$.
\end{definition}

\begin{proposition}\label{prop:graphSubstitution}
  Let $G$ and $H$ be graphs and $v \in V(G)$ a vertex. Then
  \[
    \prank(G) \le \prank(G[v \rightarrow H]).
  \]
  Moreover, if the diameter of $H$ is at most two, then $\prank(G[v \rightarrow H]) \le \prank(G) + \prank(H)$.
\end{proposition}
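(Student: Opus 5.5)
The first inequality will follow from Proposition~\ref{prop: isometric subgraph rank bound}, and the second from an explicit product embedding built out of optimal embeddings of $G$ and of $H$. Throughout, write $G' \coloneqq G[v \rightarrow H]$, and assume $|V(G)| \ge 2$ (otherwise $G' = H$ and both claims are trivial). Then $v$ has at least one neighbour, since $G$ is connected.

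\emph{First inequality.} Fix any vertex $h_0 \in V(H)$. The induced subgraph $G'[(V\setminus\{v\})\cup\{h_0\}]$ is isomorphic to $G$ via $h_0 \mapsto v$. I claim it is an isometric subgraph of $G'$, and check this on the two kinds of pairs.
\begin{itemize}
\item Take $x,y \in V\setminus\{v\}$ and a shortest path between them in $G'$. Suppose the path enters the copy of $H$ from some neighbour $a$ of $v$ and later leaves it to a neighbour $b$ of $v$. Then the whole $H$-portion, of length at least $2$, can be replaced by $a,h_0,b$, which has length $2$. So shortest paths may be assumed to meet $H$ at most in $h_0$, and $d_{G'}(x,y)=d_G(x,y)$.
\item By the same argument, $d_{G'}(x,h)=d_G(x,v)$ for every $x \in V\setminus\{v\}$ and every $h \in V(H)$, in particular for $h=h_0$.
\end{itemize}
Proposition~\ref{prop: isometric subgraph rank bound} then gives $\prank(G)\le\prank(G')$.

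\emph{Second inequality, distances in $G'$.} Assume $\operatorname{diam}(H)\le 2$. For $h,h' \in V(H)$ we have $d_{G'}(h,h')=\min(d_H(h,h'),2)$, because any detour through a common outside neighbour of $v$ has length $2$. Since $\operatorname{diam}(H)\le 2$, this gives $d_{G'}(h,h')=d_H(h,h')$. Together with the two distance formulas above, this describes the whole metric of $G'$.

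\emph{Second inequality, the embedding.} Fix a tree-embedding $\iota=(\iota_1,\dots,\iota_k)$ of $G$ with $k=\prank(G)$, and a tree-embedding $\kappa=(\kappa_1,\dots,\kappa_m)\colon H\to\Lambda_1\times\dots\times\Lambda_m$ with $m=\prank(H)$. Define $\varphi\colon G'\to\Gamma_1\times\dots\times\Gamma_k\times\Lambda_1\times\dots\times\Lambda_m$ coordinatewise.
\begin{itemize}
\item On the first $k$ coordinates, send $x\notin H$ to $\iota(x)$ and every $h\in H$ to $\iota(v)$.
\item On the $j$-th of the last $m$ coordinates, send $h \in H$ to $\kappa_j(h)$, and send every $x\notin H$ to a single point $c_j\in\Lambda_j$ with $d(c_j,\kappa_j(h))\le 1$ for all $h$.
\end{itemize}
Granting the existence of $c_j$, each coordinate is $1$-Lipschitz on every pair.
\begin{itemize}
\item Two outside vertices are at distance $0$ in the last coordinates.
\item For $x \notin H$ and $h \in H$, the last coordinates give distance at most $1\le d_G(x,v)=d_{G'}(x,h)$.
\item For $h,h' \in H$, the first $k$ coordinates give $0$.
\end{itemize}
The distance is also attained in every case.
\begin{itemize}
\item For pairs of outside vertices, and for pairs $(x,h)$, it is attained by $\iota$, since $d_{G'}(x,h)=d_G(x,v)=d(\iota(x),\iota(v))$.
\item For pairs $h,h'$, it is attained by $\kappa$, since $d_{G'}(h,h')=d_H(h,h')$.
\end{itemize}
Hence $\varphi$ is an isometric embedding into $k+m$ trees, which gives $\prank(G')\le\prank(G)+\prank(H)$.

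\emph{Existence of $c_j$ (the main obstacle).} The set $S=\kappa_j(V(H))$ has diameter $D\le 2$, since $\kappa_j$ is $1$-Lipschitz and $\operatorname{diam}(H)\le 2$. I will use that in a metric tree the radius of a finite set equals half its diameter. Pick $a,b\in S$ with $d(a,b)=D$ and let $c_j$ be the midpoint of the geodesic $[a,b]$. For any $p\in S$, let $q$ be the point where the geodesic from $p$ first meets $[a,b]$, and suppose without loss of generality that $q$ lies on the $a$-side of $c_j$. Then
\[
d(p,c_j)=d(p,q)+d(q,c_j)=d(p,b)-d(c_j,b)\le D-D/2\le 1.
\]
The middle equality holds because the geodesic from $p$ to $b$ passes through $q$ and then $c_j$, and the inequality uses $d(p,b)\le D$ and $d(c_j,b)=D/2$. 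This tree property is the one genuinely geometric input, and it is exactly where the hypothesis $\operatorname{diam}(H)\le 2$ is used a second time.
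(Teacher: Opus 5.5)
Your proof is correct and follows the paper's argument. The first inequality comes from $G$ sitting isometrically inside $G[v\rightarrow H]$, by shortcutting any excursion into $H$ through a single copy of $v$. The second comes from the same product of extended maps: $H$ is collapsed to $\iota(v)$ in the $G$-coordinates, and the outside vertices are collapsed to a central point in each $\Lambda_j$. Your midpoint argument for the existence of that central point, and your explicit checks that $d_{G[v\rightarrow H]}(h,h')=d_H(h,h')$ and $d_{G[v\rightarrow H]}(x,h)=d_G(x,v)$, supply details the paper only asserts.
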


\begin{proof}
  Firstly, we show $\prank(G) \le \prank(G[v \rightarrow H])$ by proving that $G$ is a isometric subgraph of $G[v \rightarrow H]$. To do this, identify the distinguished vertex $v$ with an arbitrary vertex $v \in V(H)$, so that $G$ is an induced subgraph of $G[v \rightarrow H]$.

  Suppose that $x, y \in V(G) \subseteq V(G[v \rightarrow H])$ and let $P$ be a shortest path from $x$ to $y$. Assume by contradiction that $P$ contains at least two distinct vertices of $H$. Let $\alpha, \omega \in V(H)$ be, respectively, the first and last occurrence of a vertex of $H$ in $P$. Thus the vertices in $P$ can be written as
  \[
    V(P) = (x \eqqcolon w_0,\, w_1,\, \dots,\, w_t,\, \alpha,\, \dots,\, \omega,\, w_{t+1},\, \dots,\, w_r \coloneqq y).
  \]
  By assumption $w_t, w_{t+1} \in V(G)$ and each are adjacent to a vertex of $H$. So, by the construction of $G[v \rightarrow H]$, both $w_t$ and $w_{t+1}$ are adjacent to $v$. So, we may construct a strictly shorter path from $x$ to $y$ with vertices
  $(w_0,\, w_1,\, \dots,\, w_t,\, v,\, w_{t+1},\, \dots,\, w_r)$,
  which contradicts the minimality of $P$. So we have proved that $P$ contains at most one vertex from $H$. Moreover, the same argument shows that, if $P$ contains a vertex from $H$, then it may be assumed to be $v$. Thus, there exists a shortest path in $G$ with the same length as $P$. Hence $G$ is an isometric subgraph of $G[v \rightarrow H]$. So by Proposition~\ref{prop: isometric subgraph rank bound}, we have $\prank(G) \le \prank(G[v \rightarrow H])$.

  Assume $H$ has diameter at most two. We prove that $\prank(G[v \rightarrow H]) \le \prank(G) + \prank(H)$ by constructing a tree embedding. Let $\iota\colon G \rightarrow \Gamma_1 \times \dots \times \Gamma_{\prank(G)}$ be a tree embedding that attains the rank of $G$. We extend $\iota$ to a map $\hat \iota$ on $G[v \rightarrow H]$ by
  \[
    \hat \iota\colon G[v \rightarrow H] \rightarrow \Gamma_1 \times \dots \times \Gamma_{\prank(G)},
    \quad
    \hat \iota(w) = \begin{cases}
      \iota(w) & \text{if } w \in V(G), \\
      \iota(v) & \text{if } w \in V(H),\, w \neq v.
    \end{cases}
  \]
  By the construction of $G[v \rightarrow H]$, it follows that $\hat \iota$ is $1$-Lipschitz and since the restriction of $\hat \iota$ to $G$ is identical to $\iota$, the only distances that are not preserved by $\hat \iota$ are those between vertices of $H$.

  Let $\kappa\colon H \rightarrow \Lambda_{1} \times \dots \times \Lambda_{\prank(H)}$ be a tree embedding that attains the rank of $H$. Without loss of generality, we will assume that minimal tree embeddings are pruned to have the smallest possible trees. Since $H$ has diameter at most two, it follows that each tree has diameter at most two. So, for each $i \in [\prank(H)]$, there is a point $p_i$ in the tree $\Lambda_i$ that has distance at most $1$ to all leaves of $\Lambda_i$. Define $p = (p_1, \dots, p_{\prank(H)})$. We extend $\kappa$ to a map $\hat \kappa$ on $G[v \rightarrow H]$ by
  \[
    \hat \kappa\colon G[v \rightarrow H] \rightarrow  \Lambda_{1} \times \dots \times \Lambda_{\prank(H)},
    \quad
    \hat \kappa(w) = \begin{cases}
      \kappa(w) & \text{if } w \in V(H),\\
      p & \text{if } w \in V(G),\, w \neq v.
    \end{cases}
  \]
  For each $w \in V(H)$ and $w' \in V(G)$, we have $d(\hat \kappa(w), \hat \kappa(w')) \le d(\kappa(w), p) \le 1$. Each vertex of $G$, except possibly $v$, is mapped to the same point in the product of trees. So, we have that $\hat \kappa$ is a $1$-Lipschitz extension of $\kappa$. Thus
  \[
    \hat \iota \times \hat \kappa\colon G[v \rightarrow H] \rightarrow \Gamma_1 \times \dots \times \Gamma_{\prank(G)} \times \Lambda_1 \times \dots \times \Lambda_{\prank(H)}, \quad
    w \mapsto (\hat \iota(w), \hat \kappa(w))
  \]
  is a tree embedding of $G[v \rightarrow H]$. So $\prank(G[v \rightarrow H]) \le \prank(G) + \prank(H)$.
\end{proof}

The final extension we consider is a special case of a \textit{$2$-vertex-sum}. Given two graphs $G$ and $H$ on disjoint vertex sets, a $2$-vertex-sum is a graph obtained by identifying two distinct vertices of $G$ with two in $H$.

\begin{proposition}\label{prop:joiningPath}
  Let $G$ be a graph and $vw \in E(G)$ an edge. Fix an integer $n \ge 3$ and let $H$ be the graph obtained from $G$ by adding $n-2$ new vertices 
  joining a path $(v \eqqcolon v_1,\, \dots,\, v_n \coloneqq w)$ of length $n-1$ between $v$ and $w$, creating an induced $n$-cycle $C$. We have that
  \[
    \max\{\prank(G), \prank(C)\}\le \prank(H) \leq \prank(G)+\prank(C).
  \]
\end{proposition}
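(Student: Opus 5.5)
The lower bound will follow from \cref{prop: isometric subgraph rank bound} once we check that both $G$ and $C$ are isometric subgraphs of $H$.

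For $G$: any path in $H$ between vertices of $G$ that uses the new vertices $v_2,\dots,v_{n-1}$ must traverse the whole new path from $v$ to $w$. This detour has length $n-1\ge 2$. Replacing it by the edge $vw$ shortens the walk, so distances in $G$ are preserved.

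For $C$: the vertices of $C$ separate into the new internal vertices and $v,w$. A shortest path in $H$ between two vertices of $C$ either stays on $C$ or leaves $C$ through $G$. If it leaves through $G$, it must exit at one of $v,w$ and re-enter at one of $v,w$, because the internal vertices have no neighbours outside $C$. Any such excursion from $v$ to $w$ through $G$ has length at least $1=d(v,w)$, so it can be replaced by the edge $vw$, which lies in $C$. Hence $d_C=d_H$ on $V(C)$, and the lower bound $\max\{\prank(G),\prank(C)\}\le\prank(H)$ follows.

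For the upper bound, take tree embeddings $\iota\colon G\to\Gamma_1\times\dots\times\Gamma_{\prank(G)}$ and $\kappa\colon C\to\Lambda_1\times\dots\times\Lambda_{\prank(C)}$ attaining the ranks.

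We extend $\iota$ to $\hat\iota$ on $H$ as follows. Subdivide the geodesic segment in each tree $\Gamma_j$ between $\iota_j(v)$ and $\iota_j(w)$, which has length at most $1$. Send the path $v_1,\dots,v_n$ along it: send $v_i$ to $\iota(v)$ when $i\le n/2$ and to $\iota(w)$ otherwise. Then consecutive vertices map to points at distance at most $1$. Since $\hat\iota$ restricts to $\iota$ on $G$ and $H$ is connected with unit edges, the $1$-Lipschitz property on edges gives the $1$-Lipschitz property on $H$.

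We extend $\kappa$ to $\hat\kappa$ on $H$ by a retraction $r\colon V(H)\to V(C)$: send each $u\in V(G)$ to whichever of $v,w$ is closer in $G$ (ties broken towards $v$), and fix $C$ pointwise. Then set $\hat\kappa=\kappa\circ r$. We must check that $r$ is $1$-Lipschitz on edges. For an edge $xy$ of $G$, the values $d(x,v)$ and $d(y,v)$ differ by at most $1$, and likewise for $w$. So $r(x),r(y)\in\{v,w\}$, which are adjacent, and the edge maps to distance at most $1$. Edges at the internal vertices lie in $C$. Hence $\hat\kappa$ is $1$-Lipschitz.

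It remains to show that $\hat\iota\times\hat\kappa$ preserves every distance. Pairs in $G$ are handled by $\hat\iota$, and pairs in $C$ by $\hat\kappa$. The main obstacle is a pair $x\in V(G)\setminus\{v,w\}$ and an internal vertex $v_i$, where
\[
d_H(x,v_i)=\min\{d_G(x,v)+(i-1),\ d_G(x,w)+(n-i)\}.
\]
Here I would try to use the $\hat\iota$ coordinate as well as $\hat\kappa$. Say $r(x)=v$, so $d_G(x,v)\le d_G(x,w)$. In the $\Lambda$ coordinates the distance between the images is $d_C(v,v_i)$, which is one of the two terms in the minimum up to the offset $d_G(x,v)$. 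In the $\Gamma$ coordinates, $\hat\iota(v_i)=\iota(v)$ or $\iota(w)$, which recovers $d_G(x,v)$ or $d_G(x,w)$.

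These two coordinates may not add, since the norm is the supremum. If they fall short, I would modify the construction. The first fix is to make the $\Lambda$-map non-constant on $G$: send $u$ to a point on a new branch attached at $\kappa(r(u))$, at distance $d_G(u,r(u))$, so that $\Lambda$-distances become $d_G(x,r(x))+d_C(r(x),v_i)$. Wedging branches onto trees keeps them trees. This matches $d_H$ whenever the minimum passes through $r(x)$. The second fix, for the remaining tie cases, is to spread the path vertices in $\hat\iota$ more cleverly.

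Verifying the $1$-Lipschitz property of this modified $\hat\kappa$, and the exactness in the mixed case, is the step I expect to need the most care.
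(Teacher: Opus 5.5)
Your lower bound is correct and matches the paper's argument. Your $\hat\iota$ is also a valid $1$-Lipschitz extension; the paper simply sends every new vertex to $\iota(v)$. The gap is in the upper bound, at the mixed pairs $x\in V(G)\setminus\{v,w\}$ with $v_s$ internal. You leave these unresolved, and neither of your constructions handles them.

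With the retraction, every $\Lambda$-coordinate gives at most $d_C(r(x),v_s)\le\lfloor n/2\rfloor$, and every $\Gamma$-coordinate gives at most $\max\{d_G(x,v),d_G(x,w)\}$. For example, take $G$ to be the path $x_3x_2x_1vw$ and $n=6$. Then $d_H(x_3,v_4)=6$, but every coordinate of $\hat\iota\times\hat\kappa$ gives at most $4$.

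Your proposed repair breaks the $1$-Lipschitz property outright; it is not a matter of care. The repair hangs $u$ at distance $d_G(u,r(u))$ on a branch at $\kappa_j(r(u))$.
\begin{itemize}
\item With one branch per vertex, two adjacent vertices retracting to $v$ land at distance $d_G(x,v)+d_G(y,v)$.
\item With one ray at $\kappa_j(v)$ and one at $\kappa_j(w)$, consider $G=C_{10}$ and the edge $xy$ opposite $vw$, so $r(x)=v\neq w=r(y)$. This edge is stretched to length at least $8$ in any coordinate with $\kappa_j(v)\neq\kappa_j(w)$.
\end{itemize}
Starting from an arbitrary $\kappa$, you have no control over where these vertices can go. Your second ``fix'', for $\hat\iota$, is not specified at all.

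The missing idea is to \emph{choose} the embedding of $C$. By Proposition~\ref{prop: cycle has tree embedding number n/2}, take $\kappa$ to be the product of the path maps based at the $m=\lceil n/2\rceil$ consecutive vertices $v_2,\dots,v_{m+1}$. Extend each one to $H$ as the path map based at the same vertex, but computed with $d_H$. Such a map is automatically $1$-Lipschitz into an interval, so no Lipschitz bookkeeping is needed. It restricts to $\kappa_j$ on $C$ because $C$ is isometric in $H$.

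For a mixed pair, $|d_G(x,v)-d_G(x,w)|\le 1$, so the function
\[
t\mapsto d_H(x,v_t)=\min\{d_G(x,v)+t-1,\ d_G(x,w)+n-t\}
\]
is tent-shaped. Its maximisers are one index or two adjacent indices, all in $\{\lfloor n/2\rfloor,\dots,m+1\}$, so they are base points when $n\ge 4$. If $s$ is at most the leftmost maximiser $t_1$, then $v_s$ lies on the geodesic $x\to\dots\to v\to v_2\to\dots\to v_{t_1}$. Otherwise it lies on the analogous geodesic through $w$ to the rightmost maximiser. In either case the path map based at that maximiser realises $d_H(x,v_s)$. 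For $n=3$, use the path map at $v_2$.

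This is the paper's route. It crushes the new vertices in the $\Gamma$-coordinates and uses path maps in $H$ for the $C$-coordinates. It realises mixed distances by extending a geodesic from $x$ through $v_s$ to a far base point.
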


\begin{figure}
\begin{center}
\begin{tikzpicture}[
  >={Stealth[length=2mm]},
  vtx/.style   = {circle, fill=black, inner sep=1.6pt},
  oldv/.style  = {circle, fill=gray!55, inner sep=1.4pt},
  newv/.style  = {circle, draw=blue!70!black, fill=white, thick, inner sep=1.6pt},
  oldE/.style  = {gray!55},
  cyc/.style   = {-, thick, blue!70!black},
  base/.pic = {
    \draw[dashed, gray] (0,0) ellipse (1.8 and 1.35);
    \node[oldv] (-a) at (-1.0, 0.55) {};
    \node[oldv] (-b) at (-0.9,-0.60) {};
    \node[oldv] (-c) at ( 0.0,-0.10) {};
    \node[vtx, label={above left:$v$}] (-v) at (0.9, 0.6) {};
    \node[vtx, label={below left:$w$}] (-w) at (0.9,-0.6) {};
    \draw[oldE] (-a) -- (-b) -- (-c) -- (-a);
    \draw[oldE] (-c) -- (-v);
    \draw[oldE] (-c) -- (-w);
    \draw[oldE] (-a) -- (-v);
  }
]

% ---------- Left panel: original graph G with edge w -> v ----------
\pic (L) at (0,0) {base};
\draw[-, thick] (L-w) -- (L-v);
\node at (0,-2.1) {$G$};

% ---------- Transition arrow ----------
\node at (2.7,0) {\Large$\Longrightarrow$};

% ---------- Right panel: G with the new path attached ----------
\pic (R) at (5.2,0) {base};

\begin{scope}[shift={(5.2,0)}]
  \node[newv, label={above:$v_2$}]        (u1)  at (2.0, 1.35) {};
  \node[newv, label={above right:$v_3$}]  (u2)  at (3.0, 0.95) {};
  \node                                   (dd)  at (3.4, 0.0)  {};
  \node[newv, label={below right:$v_{n-2}$}] (u3) at (3.0,-0.95) {};
  \node[newv, label={below:$u_{n-1}$}]    (u4)  at (2.0,-1.35) {};
\end{scope}

\draw[cyc] (R-v) -- (u1);
\draw[cyc] (u1)  -- (u2);
\draw[thick, blue!70!black, dotted] (u2) -- (dd);
\draw[thick, blue!70!black, dotted, ->] (dd) -- (u3);
\draw[cyc] (u3)  -- (u4);
\draw[cyc] (u4)  -- (R-w);
\draw[cyc] (R-w) -- (R-v);

\node at (6.4,-2.1) {$H$};

\end{tikzpicture}
\end{center}
\caption{Adding a cycle to $G$ as in Proposition \ref{prop:joiningPath}}
\label{fig: add a cycle to G}
\end{figure}

\begin{proof}
  The first inequality follows from the observation that $G$ and $C$ are isometric subgraphs of $H$. It is straightforward to see that any shortest path between two vertices of $G$ must be completely contained in $G$ and similarly for any shortest path between two vertices of $C$. So, by Proposition~\ref{prop: isometric subgraph rank bound}, we have that $\prank(G) \le \prank(H)$ and $\prank(C) \le \prank(H)$.

  For the other inequality, let
  \[
    \iota \coloneqq \iota_1 \times\dots \times\iota_\ell\colon G\rightarrow \Gamma_1 \times \dots \times \Gamma_\ell
    \quad \text{and} \quad
    \kappa \coloneqq \kappa_1 \times \dots \times \kappa_m\colon C \rightarrow \Lambda_1 \times \dots \times \Lambda_m
  \]
  be tree embeddings that achieves the ranks of $G$ and $C$ respectively.

  We proceed by constructing a $1$-Lipschitz extension $\hat \iota_i \colon H \rightarrow \Gamma_i$ of $\iota_i$ for each $i \in [\ell]$ and a $1$-Lipschitz extension $\hat \kappa_j \colon C \rightarrow \Lambda'_j$ of $\kappa_j$ for each $j \in [m]$. Note that we will modify the tree $\Lambda_j$ to form $\Lambda'_j$ but keep the tree $\Gamma_j$ unchanged.

  Fix $i \in [\ell]$ and define the extension $\hat \iota_i$ of $\iota_i$ as follows
  \[
    \hat \iota_i\colon H \rightarrow \Gamma_i, \quad
    \hat \iota_i(x) = \begin{cases}
      \iota_i(x) & \text{if } x \in V(G),\\
      \iota_i(v) & \text{if } x \in V(C),\, x \notin \{v, w\}.
    \end{cases}
  \]
  In words, the map $\hat \iota_i$ maps vertices of $G$ by $\iota_i$ and crushes everything else to the image of the vertex $v$. Since $\kappa_j$ is $1$-Lipschitz, it follows that $\hat \kappa_j$ is also $1$-Lipschitz.

  % \color{red}
  % We'll need the following things:
  % \begin{itemize}
  %   \item If $n = 1$, then this is a special case where $m = 1$ and we replace $\kappa_1$ with the \textit{path map} from the new vertex in $C$ (Define \textit{path map} earlier in the section and use it in proof of Proposition~\ref{prop: phylogenetic rank less than n-1})

  %   \item If $n \ge 2$, then there is a path with $\lfloor n/2 \rfloor$ vertices, call them $U$, that includes the two or three vertices that are furthest away from $v$ and $w$ in $C$.

  %   \item Assume that $\kappa$ is given by the product of path maps starting at vertices of $U$.

  %   \item Extend each of these path maps to $H$ in the natural way to obtain $\hat \kappa_j$. And we are ready to take cases on all the minimal paths in $H$.

  %   \begin{itemize}
  %     \item Every shortest path in $G$ is realised by some extension $\hat \iota_i$

  %     \item Every shortest path in $C$ is realised by some extension $\hat \kappa_j$

  %     \item Every shortest path $P$ from a vertex in $C$ to a vertex outside of $C$ can be decomposed into two shortest paths $P = P_C \cup P_G$ that leaves $C$ though $v$ or $w$. WLOG, assume $P$ \textit{`leaves through'} $v$. Then, this path's distance is attained by (the unique map if $n$ is even, or one of the two maps if $n$ is odd) $\hat \kappa_j$ that is the path map from a furthest point of $v$ in $C$.
  %   \end{itemize}

  % \end{itemize}

  % \color{black}

  Let us now extend the maps $\kappa_j$ to $H$. We first deal with the special case $n = 3$. Since $C$ is the complete graph $K_3$, by Example~\ref{example: Kn has rank 1}, we have $m = 1$. We define $\hat \kappa \colon H \rightarrow \Lambda'$ to be the path map based at the vertex $u \in V(C)$ with $u \notin \{v, w\}$, and show that the product map
  \[
    \hat \iota_1 \times \dots \times \hat \iota_\ell \times \hat \kappa\colon H \rightarrow \Gamma_1 \times \dots \times \Gamma_\ell \times \Lambda'
  \]
  is a tree embedding. Let $x, y \in V(H)$ be any vertices. If $x,y \in V(G)$, then, since $G$ is an isometric subgraph of $H$, there is an index $i \in [\ell]$ such that $d_H(x,y) = d_{\Gamma_i}(\iota_i(x), \iota_i(y)) = d_{\Gamma_i}(\hat \iota_i(x), \hat \iota_i(y))$. On the other hand, if at least one of $x, y$ does not lie in $V(G)$, then we may assume without loss of generality that $x = u$. By the construction of the path map $\hat \kappa$, it immediately follows that $d_H(x,y) = d_{\Lambda'}(\hat \kappa(x), \hat \kappa(y))$. So, in each case we showed that the distance between $x,y$ is achieved by some coordinate of the product. Since the extensions $\hat \iota_i$ and the map $\hat \kappa$ are $1$-Lipschitz, we have the map is a tree embedding. So $\prank(H) \le \prank(G) + 1 = \prank(G) + \prank(C)$.

  From now on, let us assume $n \ge 4$. For each $j \in [m]$, we construct an extension of $\kappa_j$ to $H$. Recall that the vertices of $C$ are labelled cyclically $v = v_1, v_2, \dots, v_n = w$. Define $k = \lfloor n/2 \rfloor$. By Proposition~\ref{prop: cycle has tree embedding number n/2}, we may assume that $\kappa_j$ is the path map based at vertex $v_{j+1}$. In particular, all path maps based vertices with distance $k$ from $v$ or $w$ are included among the coordinates of $\kappa$.

  We define $\hat \kappa_j \colon H \rightarrow \Lambda'_j$ to be the path map based at $v_{j+1}$ in $H$. Since $C$ is an isometric subgraph of $H$, it follows that the restriction of $\hat \kappa_j$ to $C$ coincides with $\kappa_j$. Since $\hat \kappa_j$ is a path map, we have that $\Lambda'_j$ is a tree metric and $\hat \kappa_j$ is $1$-Lipschitz.

  We will now prove that the product of maps
  \[
    \hat \iota_1 \times \dots \times \hat \iota_\ell \times \hat \kappa_1 \times \dots \times \hat \kappa_m\colon H \rightarrow \Gamma_1 \times \dots \times \Gamma_\ell \times \Lambda'_1 \times \dots \times \Lambda'_m
  \]
  is a tree embedding. We proceed by fixing $x,y \in V(H)$ and taking cases.

  \smallskip
  \noindent \textbf{Case 1.} Assume $x, y \in V(G)$. Since $\iota$ is a tree embedding of $G$, there exists an index $i \in [\ell]$ such that $d_G(x,y) = d_{\Gamma_i}(\iota_i(x), \iota_i(y))$. Since $G$ is a isometric subgraph of $H$ and the restriction of $\hat \iota_i$ to $G$ is equal to $\iota_i$, it follows that $d_H(x,y) = d_{\Gamma_i}(\hat \iota_i(x), \hat \iota_i(y))$.

  \smallskip
  \noindent \textbf{Case 2.} Assume $x,y \in V(C)$. Since $\kappa$ is a tree embedding of $C$, there exists $j \in [m]$ such that $d_G(x,y) = d_{\Lambda_j}(\kappa_j(x), \kappa_j(y))$. Since $C$ is a isometric subgraph of $H$ and the restriction of $\hat \kappa_j$ to $C$ is equal to $\kappa_j$, it follows that $d_H(x,y) = d_{\Lambda'_j}(\hat \kappa_j(x), \hat \kappa_j(y))$.

  \smallskip
  \noindent \textbf{Case 3.} Assume $x \in V(G)$ and $y \notin V(G)$. Consider a shortest path $P$ from $x$ to $y$. Since $V(G) \cap V(C) = \{v, w\} \subseteq V(H)$, the path $P$ must pass through at least one of $v$ or $w$. We take further cases based on whether $P$ passes through $v$, $w$, or both.

  \smallskip
  \noindent \textbf{Case 3(i).}
  Assume that $P$ passes through $v$ and not $w$. So we have
  \[
    P \colon x \rightarrow \dots \rightarrow v \rightarrow \dots \rightarrow y.
  \]
  We extend $P$ to a path $P'$ from $x$ to $v_{k+1}$ as follows
  \[
    P' \colon x \rightarrow \dots \rightarrow y = v_s \rightarrow v_{s+1} \rightarrow \dots \rightarrow v_{k+1}.
  \]
  Since $k \le n/2$, the subpath of $P'$ given by $v \rightarrow \dots \rightarrow v_{k+1}$ is a minimum length path. Since $w$ does not appear in $P$, it follows that $P'$ is a minimum length path. Note that, by Proposition~\ref{prop: cycle has tree embedding number n/2}, we have $m = k$. Since $\hat \kappa_k$ is the path map based at $v_{k+1}$, it follows that $d_H(x,v_{k+1}) = d_{\Lambda'_k}(\hat \kappa_k(x), \hat \kappa_k(v_{k+1}))$. Since $P$ is a subpath of $P'$, and $P'$ is mapped isometrically into $\Lambda'_k$ by $\hat \kappa_k$, it follows that $d_H(x,y) = d_{\Lambda'_k}(\hat \kappa_k(x), \hat \kappa_k(y))$.

  \smallskip
  \noindent \textbf{Case 3(ii).}
  Suppose that $P$ passes through $v$ after $w$, i.e., we have:
  \[
    P \colon x \rightarrow \dots \rightarrow w \rightarrow v \rightarrow \dots \rightarrow y.
  \]
  Since $P$ is a minimum length path, we must have that $y$ is closer to $v$ than $w$, thus $y \neq v_{k+1}$.
  We extend $P$ to a path $P'$ from $x$ to $v_k$ as follows
  \[
    P' \colon x \rightarrow \dots \rightarrow y = v_s \rightarrow v_{s+1} \rightarrow \dots \rightarrow v_k.
  \]
  Since $k \le n/2$, the subpath of $P'$ given by $w \rightarrow v \rightarrow \dots \rightarrow v_k$ is a minimum length path. Note that, by Proposition~\ref{prop: cycle has tree embedding number n/2}, we have $m = k$. Since $\hat \kappa_{k-1}$ is the path map based at $v_k$, it follows that $d_H(x,v_k) = d_{\Lambda'_{k-1}}(\hat \kappa_{k-1}(x), \hat \kappa_{k-1}(v_k))$. Since $P$ is a subpath of $P'$ and $P$ is mapped isometrically into $\Lambda'_{k-1}$ by $\hat \kappa_k$, it follows that $d_H(x,y) = d_{\Lambda'_{k-1}}(\hat \kappa_{k-1}(x), \hat \kappa_{k-1}(y))$.

  \smallskip
  \noindent \textbf{Case 3(iii).} Assume that $P$ basses through $w$ and not $V$ or through $w$ after $v$. Then, by same argument from my case 3(i) or 3(ii) with $v$ and $w$ swapped and the path $P$ going around the other side of the cycle $C$, we deduce that $d_H(x,y) = d_{\Lambda'_j}(\hat \kappa_j(x), \hat \kappa_j(y))$ for some $j \in [k]$.

  \smallskip

  In each case, we have shown that the distance between $x,y$ is attained by some coordinate of the product. So the map is a tree embedding. Hence $\prank(H) \le \prank(G) + \prank(C)$. This concludes the proof. \qedhere

\end{proof}

\section{Graphs with known phylogenetic ranks}\label{sec:examples}

In this section, we study the ranks of special families of graphs and characterise the graphs with rank one. In Proposition~\ref{prop: cycle has tree embedding number n/2} and Theorem~\ref{thm: examples with rank n/2}, we identify families of graphs whose rank is half the number of vertices including the $n$-cycle and its complement. In Theorem~\ref{thm: phylogenetic rank one}, we classify the rank-one graphs with an forbidden induced subgraph classification.

We recall the \textit{four point condition}, which is classifies tree metrics.

\begin{proposition}[{\cite[Theorem~2]{buneman1974note}, \cite[Theorem~2.35]{AlgStatsForBio}}]\label{prop: four point condition}
A finite metric space $X$ is a tree metric if and only if it satisfies the \textbf{four point condition}: for any distinct $s,t,u,v \in X$ the maximum value among:
\[
d(s,t) + d(u,v),\quad
d(s,u) + d(t,v),\quad
d(s,v) + d(u,t)
\]
is attained at least twice. 
% \yue{I'm putting it here, since I don't think it is needed in Section 2.}
\end{proposition}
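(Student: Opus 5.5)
We prove the two directions separately. Throughout, a \emph{tree metric} means a finite metric space that embeds isometrically into a metric tree $\Gamma$ in the sense of \cref{con:main}. We allow points to map to interior points of edges and allow degree-two vertices, so we may always subdivide.

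\emph{Necessity.} Let $s,t,u,v$ be points of a metric tree $\Gamma$. Consider the subtree $T$ spanned by the four points, i.e.\ the union of the geodesics between them. Then $T$ is a tree with at most four leaves, and there are two cases.
\begin{itemize}
  \item[(a)] All four geodesics meet at a single point $c$. Every pairwise distance is then a sum of the legs $\ell_s,\ell_t,\ell_u,\ell_v$ from $c$. Each of the three pairings has the same sum $\ell_s+\ell_t+\ell_u+\ell_v$, so the maximum is attained three times.
  \item[(b)] Some split, say $\{s,t\}\mid\{u,v\}$, has a central path of length $L>0$ separating the two pairs. Then $d(s,u)+d(t,v)=d(s,v)+d(t,u)=d(s,t)+d(u,v)+2L$. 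So the maximum is attained twice (the two non-split pairings), and the split pairing is strictly smaller.
\end{itemize}
This is a direct computation of path lengths in the spanning tree. Degenerate configurations, where some points coincide with branch points, are limits of these two cases and are handled identically.

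\emph{Sufficiency.} We induct on $|X|$. For $|X|\le 3$ we build a tripod explicitly. Its legs have lengths given by the Gromov products $(a|b)_c=\tfrac12\bigl(d(c,a)+d(c,b)-d(a,b)\bigr)\ge 0$, which are nonnegative by the triangle inequality.

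For the inductive step, write $X=Y\cup\{x\}$ and let $\iota\colon Y\to\Gamma$ be an isometric embedding into a metric tree, which exists by induction. Choose $a,b\in Y$ minimising $\delta:=(a|b)_x$, allowing $a=b$ (in which case $\delta=0$ and the attachment is at $\iota(a)$). Let $p$ be the point on the geodesic $[\iota(a),\iota(b)]$ at distance $d(x,a)-\delta$ from $\iota(a)$; this equals $(x|b)_a\le d(a,b)$, so $p$ lies on the geodesic. Attach a new pendant edge of length $\delta$ at $p$ and place $x$ at its end, subdividing an edge of $\Gamma$ at $p$ if necessary. By construction $x$ has the correct distance to $a$ and to $b$.

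The main obstacle is showing $d_\Gamma(\iota(c),x)=d(c,x)$ for every other $c\in Y$. Since $d_\Gamma(\iota(c),x)=d_\Gamma(\iota(c),p)+\delta$, we must compute where the geodesic from $\iota(c)$ meets $[\iota(a),\iota(b)]$. The meeting point is at distance $(b|c)_a$ from $\iota(a)$, so we split into cases according to whether $p$ lies on the $a$-side or the $b$-side of it.
\begin{itemize}
  \item Suppose the meeting point lies between $p$ and $\iota(b)$. Then $d_\Gamma(\iota(c),x)=d(c,a)-d(x,a)+2\delta$, and we need this to equal $d(c,x)$. Apply the four point condition to $\{x,a,b,c\}$. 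Minimality of $\delta$ gives $(a|c)_x\ge\delta$ and $(b|c)_x\ge\delta$. These inequalities say precisely which of the three pairing sums is smallest, so the condition that the maximum is attained twice forces the required equality.
  \item The case where the meeting point lies on the $a$-side is symmetric, with the roles of $a$ and $b$ exchanged.
\end{itemize}
I expect the delicate part to be the bookkeeping: translating the minimality of $\delta$ into ``the split $\{x,c\}\mid\{a,b\}$ is not the unique maximum'' in each case, and handling ties where the meeting point equals $p$. I would first write out all three pairing sums for $\{x,a,b,c\}$ in terms of the Gromov products and compare them directly. Once this check is done, the extended map is an isometric embedding of $X$ into a metric tree, which completes the induction.
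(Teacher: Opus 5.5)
\textbf{Verdict.} The paper gives no proof of this proposition; it is quoted from Buneman and Pachter--Sturmfels, so there is no route to compare with. Your one-point insertion argument is a standard proof of sufficiency and it does close. Two statements in it are wrong as written, though, and one of them sits exactly at the step you flag as the main obstacle.

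\textbf{The case $a=b$.} The parenthetical is false: $(a|a)_x=d(x,a)$, not $0$, and placing $x$ at $\iota(a)$ would give $d(x,a)=0$. Simply insist on $a\neq b$. By the triangle inequality $(a|b)_x\le d(x,a)$, so the minimum over pairs with $a\neq b$ is already the overall minimum, and $|Y|\ge 3$ in the inductive step anyway. This also guarantees that $x,a,b,c$ are four distinct points, so the four point condition genuinely applies to them.

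\textbf{The key step.} Put $S_1=d(x,c)+d(a,b)$, $S_2=d(x,a)+d(b,c)$ and $S_3=d(x,b)+d(a,c)$. Directly from the definitions,
\[
(a|c)_x-(a|b)_x=\tfrac12(S_1-S_3),\qquad (b|c)_x-(a|b)_x=\tfrac12(S_1-S_2),\qquad (b|c)_a-(x|b)_a=\tfrac12(S_3-S_2).
\]
\begin{itemize}
\item Minimality of $\delta$ therefore says that $S_1$ is the \emph{largest} of the three sums. It does not say which is smallest.
\item The four point condition on its own then only gives $S_1=\max(S_2,S_3)$.
\item In your first case, the required identity $d(c,a)-d(x,a)+2\delta=d(c,x)$ is exactly $S_1=S_3$. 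To get it you must use the case hypothesis: the meeting point lies on the $b$-side of $p$ precisely when $S_3\ge S_2$. Hence $S_1\ge S_3\ge S_2$, and the maximum being attained twice forces $S_1=S_3$.
\item In the other case you have $S_1\ge S_2\ge S_3$, which forces $S_1=S_2$, the identity needed there.
\item When the meeting point equals $p$, i.e.\ $S_2=S_3$, the two distance formulas agree.
\end{itemize}

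With this bookkeeping filled in, the induction is complete. Your necessity argument is fine: the degenerate configurations are the same length computation with some legs of length zero.
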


We will use the four point condition throughout for showing a lower bound on the rank on graphs.

\begin{proposition}\label{prop: cycle has tree embedding number n/2}
    Fix $n \ge 4$. The $n$-cycle $C_n$ has phylogenetic rank $k \coloneqq \lceil n/2 \rceil$. Moreover, let $x_1, \dots, x_k$ be a sequence of consecutive vertices in $C_n$ and $p_i \colon C_n \rightarrow \Gamma_i$ be the path map based at $x_i$ for each $i \in [k]$. Then a minimal tree embedding is given by the product $p_1 \times \dots \times p_k \colon C_n \rightarrow \Gamma_1 \times \dots \times \Gamma_k$.
\end{proposition}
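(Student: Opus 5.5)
The plan has two parts: an upper bound from the explicit product of path maps, and a matching lower bound from the four point condition (Proposition~\ref{prop: four point condition}). Throughout, label the cycle $v_0,\dots,v_{n-1}$ cyclically, write $d$ for $d_{C_n}$, and set $k=\lceil n/2\rceil$.

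\textbf{Upper bound.} Each path map $p_i$ is $1$-Lipschitz, so the product $p_1\times\dots\times p_k$ is $1$-Lipschitz. It then suffices to show that for every pair $a,b$ with $d(a,b)=\delta\ge 1$ some base point $x_i$ is ``good'', meaning $|d(x_i,a)-d(x_i,b)|=\delta$. Equivalently, $a$ lies on a geodesic from $x_i$ to $b$, or $b$ lies on a geodesic from $x_i$ to $a$.
\begin{itemize}
\item The good vertices form two arcs. The first starts at $a$ and runs away from $b$, and consists of the $t=0,\dots,\lfloor n/2\rfloor-\delta$ vertices at distance $t$ from $a$; these satisfy $t+\delta\le\lfloor n/2\rfloor$, so $d(x,b)=t+\delta$. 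The second arc is the symmetric one behind $b$.
\item The bad vertices therefore form two arcs: the $\delta-1$ interior vertices of the short $a$--$b$ arc, and a far arc with $n-2\lfloor n/2\rfloor+\delta-1$ vertices.
\item Since $\delta\le\lfloor n/2\rfloor$, both bad arcs have at most $\lceil n/2\rceil-1<k$ vertices.
\end{itemize}
Hence any $k$ consecutive vertices $x_1,\dots,x_k$ contain a good vertex, and the product is an isometric embedding. This gives $\prank(C_n)\le k$.

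\textbf{Lower bound, key lemma.} Let $\iota_j\colon C_n\to\Gamma$ be one coordinate of an arbitrary tree embedding. It is $1$-Lipschitz, and its image satisfies the four point condition. Call a pair $\{a,a'\}$ \emph{diametral} if $d(a,a')=\lfloor n/2\rfloor$, and say $\iota_j$ \emph{realises} it if $d_\Gamma(\iota_j(a),\iota_j(a'))=\lfloor n/2\rfloor$.

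The lemma is that two vertex-disjoint diametral pairs cannot both be realised by the same tree. To prove it, note first that two disjoint diametral chords must interleave, in cyclic order $a,b,a',b'$. Let $s,t,u,w\ge 1$ be the gaps between consecutive points in this order, so $s+t=t+u=\lfloor n/2\rfloor$.
\begin{itemize}
\item The $1$-Lipschitz property bounds two of the four point sums: $d_\Gamma(a,b)+d_\Gamma(a',b')\le s+u$ and $d_\Gamma(b,a')+d_\Gamma(b',a)\le t+w$.
\item Both of these bounds are strictly less than $2\lfloor n/2\rfloor$. For $n=2m$ they equal $2s$ and $2t$. For $n=2m-1$ they equal $2s$ and $2m-1-2s$, using $s\ge 1$ and $t\ge1$.
\item The third sum, $d_\Gamma(a,a')+d_\Gamma(b,b')$, equals $2\lfloor n/2\rfloor$ if both pairs are realised. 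The maximum would then be attained only once, contradicting Proposition~\ref{prop: four point condition}.
\end{itemize}
This four point computation, in particular getting the interleaving and the gap arithmetic right for odd $n$, is the step I expect to need the most care.

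\textbf{Counting.} In any tree embedding every diametral pair must be realised by some coordinate. By the lemma, the pairs realised by one coordinate are pairwise intersecting.
\begin{itemize}
\item For $n=2m$ the $m$ antipodal pairs are pairwise disjoint, so at least $m=k$ trees are needed.
\item For $n=2m-1$ there are $n$ diametral pairs, and they form the edges of a single $n$-cycle on $V$: each vertex lies in exactly two of them, and the step $m-1$ is coprime to $n$.
\item Pairwise intersecting edges of a graph form a star or a triangle. A triangle is impossible in an $n$-cycle with $n\ge5$, and a star there has at most two edges.
\item So each coordinate realises at most $2$ diametral pairs, forcing at least $\lceil n/2\rceil=k$ trees.
\end{itemize}
Combined with the upper bound, this gives $\prank(C_n)=k$ and shows that the embedding $p_1\times\dots\times p_k$ attains the rank.
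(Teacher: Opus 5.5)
Your proof is correct. For the upper bound and for even $n$ it follows the paper.

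\textbf{Upper bound.} The paper extends a geodesic from $x$ through $y$ to length $\lfloor n/2\rfloor$, ending at some $z$. It then notes that $k$ consecutive vertices cannot avoid both $x$ and $z$. This is a leaner version of your count of good arcs, since it uses only two good base points.

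\textbf{Even $n$.} The paper pigeonholes the $k$ antipodal pairs into fewer than $k$ trees and applies the four point condition, exactly as in your key lemma.

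\textbf{Odd $n$.} This is where your argument genuinely differs: the paper dismisses this case with ``follows similarly'', and it is not a routine adaptation.
\begin{itemize}
\item For $n=2m+1$, any family of pairwise disjoint diametral pairs has at most $m=k-1$ members.
\item The analogous family $\{x,x+k\}$, $x\in[k]$, contains $\{1,m+2\}$ and $\{m+1,1\}$. These share a vertex, so the four point argument says nothing about them.
\item Pigeonhole on such a family therefore only gives $k-1$.
\end{itemize}
Your observation supplies the missing ingredient. All $n$ diametral pairs form a $2$-regular, triangle-free graph, so one tree realises at most two of them, and hence $2r\ge n$. On this point your write-up is more complete than the paper's.

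\textbf{One detail to state explicitly.} For odd $n$, the normalisation $s+t=t+u=\lfloor n/2\rfloor$ is a choice of labelling, not automatic. It is always available, for three reasons:
\begin{itemize}
\item exactly one of $s+t,\,u+w$ equals $\lfloor n/2\rfloor$, and exactly one of $t+u,\,w+s$ does;
\item any two such sums, one from each pair, share a gap;
\item restarting the cyclic order at the appropriate point achieves the normalisation. This may swap the roles of the two pairs, which is harmless since your lemma is symmetric in them.
\end{itemize}
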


% \begin{proof}
%     \textcolor{blue}{The proof of this is a lot easier if we allow ourselves to use the $T$-shaped incompatibility result.}

%     \textcolor{red}{Add an example to section~5 and reference that.}
    
%     \textcolor{blue}{Actually, this is only in the case of $n$ even. For $n$ odd we need something stronger (copy previous proof).}
    
% \end{proof}

\begin{proof}
Let $V(C_n) = [n]$ and assume that the edges are $\{1,2\}$, $\{2,3\}$, \dots, $\{n-1,n\}$,$\{1,n\}$.
First, we show that the map
\[
p \coloneqq p_1 \times \dots \times p_k \colon C_n \rightarrow \Gamma_1 \times \dots \times \Gamma_k
\]
is a tree embedding, where $p_i \colon C_n \rightarrow \Gamma_i$ is the path map based at vertex $i \in [k]$. Since $p_i$ is a path map, we have that the $p$ is $1$-Lipschitz and each $\Gamma_i$ is a tree. So it remains to show that for any $x, y \in V(C_n)$, we have that $d_{C_n}(x,y) = d_{\Gamma_i}(p_i(x), p_i(y))$ for some $i \in [k]$.

Fix $x,y \in V(C_n)$ and let $P$ be the shortest path from $x$ to $y$. So we have one of
\[
P \colon x \rightarrow x+1 \rightarrow \dots \rightarrow y
\quad \text{or} \quad
P \colon x \rightarrow x-1 \rightarrow \dots \rightarrow y.
\]
In each case, we extend $P$ to a path $P'$ of length $\lfloor n/2 \rfloor$, which is a shortest path from $x$ to some vertex $z \in V(C_n)$. So the path $P'$ is of the form
\[
P' \colon x \rightarrow \dots \rightarrow y \rightarrow \dots \rightarrow z.
\]
Observe that the induced subgraph of $C_n$ obtained by removing $x$ and $z$ consists of two paths of length strictly less than $k = \lceil n/2 \rceil$. It follows that exactly one of $x$ or $z$ lies in $[k]$. If $x \in [k]$, then the path $P'$ is isometrically embedded into $\Gamma_x$ by $p_x$, so we have $d_{C_n}(x,y) = d_{\Gamma_x}(p_x(x), p_x(y))$. Similarly if $z \in [k]$ then  $P'$ is isometrically embedded into $\Gamma_z$ by $p_z$, so we have $d_{C_n}(x,y) = d_{\Gamma_z}(p_z(x), p_z(y))$. So we have shown that $p$ is a tree embedding and so $\prank(C_n) \le k$.

Next, we will show that $\prank(C_n) = k$ by contradiction. Assume that we have a tree embedding $\iota = \iota_1 \times \iota_m \colon C_n \rightarrow \Gamma_1 \times \dots \times \Gamma_m$ for some $m < k$.

Assume that $n$ is even. The proof for $n$ odd follows similarly. For each $x \in [k]$ note that the distance, in $C_n$,  between the vertices $x$ and $x+k$ is $d_{C_n}(x, x+k) = k$. So, for each $x \in [k]$ there is some coordinate $j = j_x \in [m]$ of the product space such that $d_{\Gamma_j}(\iota_j(x), \iota_j(x+k)) = k$. Since $m < k$, by the pigeonhole principal, there exists distinct $x, x' \in [k]$ such that $j_x = j_{x'} \eqqcolon j$. Since $x \neq x'$ and $x \neq x'+m$, it follows that
\[
d_{C_n}(x, x') < k,\, d_{C_n}(x+k, x'+k) < k,\, 
d_{C_n}(x, x'+k) < k,\, d_{C_n}(x', x+k) < k. 
\]
Since $d_{C_n}(x, x+k) = d_{C_n}(x',x'+k) = d_{\Gamma_j}(\iota_j(x), \iota_j(x+k)) = d_{\Gamma_j}(\iota_j(x'),\iota_j(x'+k)) = k$, and by the four point condition on $\Gamma_j$, we have that either
\begin{align*}
    & d_{\Gamma_j}(\iota_j(x),\iota_j(x')) + d_{\Gamma_j}(\iota_j(x+k),\iota_j(x'+k)) = 2k \\
    \text{ or } &
    d_{\Gamma_j}(\iota_j(x),\iota_j(x'+k)) + d_{\Gamma_j}(\iota_j(x'),\iota_j(x+k)) = 2k.
\end{align*}
But, in each case, this contradicts the lengths of the edges in $C_n$. So we have shown that there are no tree embeddings of $C_n$ into strictly fewer than $k$ trees. Hence, the rank of $C_n$ is exactly $k$.
\end{proof}

\begin{theorem}\label{thm: examples with rank n/2}
    For $n \ge 6$ even, the following graphs have phylogenetic rank $n/2$:
    \begin{enumerate}
        \item $K_n \setminus C_n$,
        \item $K_n$ minus a perfect matching,
        \item $Q_k$ vertex-edge graph of the $k$-dimensional hypercube where $n = 2^k$,
        \item $K_n$ minus two even vertex-disjoint cycles each of length at least $6$ that cover the vertices of $G$,
        % \item $K_n$ minus two edge-disjoint copies of $C_n$ with $n \ge 10$,
        % \ollie{I can't make the graph $K_n$ minus two edge-disjoint copies of $C_n$ work. If $n \ge 10$ then the graph has diameter $2$ but we may need more conditions}
        \item $J_{2k,k}$ Johnson graph (edge graph of the hypersimplex $\Delta_{2k,k}$) where $n = \binom{2k}{k}$.
    \end{enumerate}
\end{theorem}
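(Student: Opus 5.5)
The plan is to use the pattern of \cref{prop: cycle has tree embedding number n/2}. The lower bound comes from $n/2$ ``antipodal'' pairs plus the four point condition (\cref{prop: four point condition}). The upper bound comes from a product of $n/2$ path maps, one based at a vertex of each such pair.

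\textbf{Lower bound.} Let $D$ be the diameter of $G$. In each case I will exhibit $n/2$ pairwise disjoint pairs $\{a_i,b_i\}$ with $d_G(a_i,b_i)=D$. These must be chosen so that for any two of them, $\{a,b\}$ and $\{c,d\}$, each of the two cross pairings $(ac,bd)$ and $(ad,bc)$ contains at least one pair at distance $<D$.
\begin{itemize}
\item In cases (2), (3) and (5), every vertex has a unique vertex at distance $D$. This is the matched vertex for (2) with $D=2$, the antipode for $Q_k$ with $D=k$, and the complement for $J_{2k,k}$ with $D=k$. So all cross distances are $<D$.
\item In cases (1) and (4), $D=2$ and the distance-$2$ graph is $C_n$, respectively two cycles of length $\ge 6$. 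I take a perfect matching of these cycles. If both pairs of some cross pairing were at distance $2$, they would form a $4$-cycle in a cycle of length $\ge 6$, which is impossible.
\end{itemize}
Now suppose $\iota$ is a tree embedding into $m<n/2$ trees. Each pair $\{a_i,b_i\}$ has its distance $D$ attained in some coordinate, so by pigeonhole two pairs $\{a,b\}$ and $\{c,d\}$ are attained in the same tree $\Gamma_j$. Then $d_{\Gamma_j}(a,b)+d_{\Gamma_j}(c,d)=2D$. Since $\iota_j$ is $1$-Lipschitz, each cross pairing sums to less than $2D$ in $\Gamma_j$. So the maximum in the four point condition is attained only once, a contradiction.

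\textbf{Upper bound.} Fix a set $S$ of $n/2$ vertices and use $\prod_{v\in S}p_v$. This map is $1$-Lipschitz, and each $\Gamma_v$ is a tree. A pair $x,y$ is realized by $p_v$ exactly when $d(v,y)=d(v,x)+d(x,y)$, or the same with $x,y$ swapped. Every pair with an endpoint in $S$ is realized automatically, so only pairs with both endpoints outside $S$ need checking.
\begin{itemize}
\item \textbf{Case (2).} Let $S$ contain one vertex of each matched pair. For $x,y\notin S$, the matched partner $x'$ of $x$ lies in $S$, and $d(x',x)=2=d(x',y)+d(y,x)$.
\item \textbf{Cases (1) and (4).} Let $S$ consist of alternate vertices along each cycle of the distance-$2$ graph. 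Every cycle edge then meets $S$, so all distance-$2$ pairs are realized. Now take $x,y\notin S$ adjacent in $G$. The two cycle-neighbours $y\pm1$ of $y$ lie in $S$. Choose $v=y\pm1$ with $v$ adjacent to $x$ in $G$; this is possible because cycle length $\ge 6$ means $x$ cannot be cycle-adjacent to both. Then $p_v$ sends $v\mapsto 0$, $x\mapsto1$, $y\mapsto 2$, which realizes the pair.
\item \textbf{Cases (3) and (5).} Let $S$ contain one vertex of each antipodal pair $\{v,v'\}$. Since $d(v',z)=D-d(v,z)$ in both graphs, $p_{v'}$ realizes the same pairs as $p_v$. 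It therefore suffices to find, for each pair $x,y$, some vertex $v$ with $d(v,y)=d(v,x)+d(x,y)$.
  \begin{itemize}
  \item In $Q_k$, take $v$ agreeing with $x$ on all coordinates where $x,y$ differ.
  \item In $J_{2k,k}$, take $v\supseteq x\setminus y$ with $v\cap(y\setminus x)=\emptyset$. Such a $v$ exists since $|x\setminus y|\le k$ and $|[2k]\setminus(y\setminus x)|\ge k$. Then $|y\setminus v|-|x\setminus v|=|y\setminus x|=d(x,y)$.
  \end{itemize}
\end{itemize}

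The step I expect to be the main obstacle is the diameter-$2$ cases (1) and (4) in the upper bound. There, pairs at distance $1$ must also be realized, and a naive single ``star'' coordinate would cost one tree too many. The neighbour-choice argument above is where the hypothesis that every cycle has length $\ge 6$ is really used.
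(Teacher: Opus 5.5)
Your proof is correct and follows essentially the paper's route. The upper bound is a product of $n/2$ path maps, and the lower bound uses $n/2$ disjoint diametral pairs whose matched sum strictly exceeds both cross sums, combined with pigeonhole and the four point condition. The differences are local: in case (4) you use ordinary path maps with the same neighbour choice as in case (1), where the paper introduces augmented path maps; in cases (3) and (5) you use the antipodal identity $d(v',z)=D-d(v,z)$ (which already lets you take $v=x$) instead of the paper's neighbour-extension argument with a specific transversal.
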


\begin{proof}
    For each graph $G$ among the listed families, we apply the following sequence of arguments. First, we exhibit an embedding into $n/2$ trees and conclude $\prank(G) \le n/2$. Then, we construct a partition of the vertices into pairs $P \subseteq \{vw \mid v,w \in V(G)\}$ such that
    for all $vw, xy \in P$ with $vw \neq xy$ the set $v,w,x,y$ does not satisfy the four point condition.
    So, each $vw \in P$ attains its distance in a distinct coordinate of a minimal embedding of $G$. Thus, any minimal embedding has rank at least $|P| = n/2$, thus $\prank(G) = n/2$.

    So, for each graph, we will provide a minimal embedding and the description of the partition $P$ and a proof that it satisfies the criterion.

    \smallskip
    \noindent \textbf{(1)} {Let $G = K_n \setminus C_n$.} Assume that the edges of $C_n$ are $12,\, 23,\, 34, \dots, 1n$. Let
    \[
    p \coloneqq p_1 \times p_3 \times \dots \times p_{n-1} \colon G \rightarrow \Gamma_1 \times \Gamma_3 \times \dots \times \Gamma_{n-1},
    \]
    where $p_i \colon G \rightarrow \Gamma_i$ is the path map based at vertex $i \in [n]$. The map $p$ is $1$-Lipschitz because it is a product of path maps. To see that $p$ is a tree embedding, let $v, w \in [n]$ be vertices. If one of $v, w$ is odd, say $v$, then we have $d_G(v,w) = d_{\Gamma_v}(p_v(v), p_v(w))$. If both $v,w$ are even, then since $n \ge 6$, there exists a vertex $x \in [n]$ such that $xv \in E(C_n)$ and $xw \notin E(C_n)$. Then we have $d_G(x,v) = 2$ and $d_G(x,w) = 1$ so $1 = d_G(v,w) = d_{\Gamma_x}(p_x(v), p_x(w))$. Thus $p$ is a tree embedding of $G$.

    Define $P = \{\{1, 2\}, \{3,4\}, \{5,6\}, \dots, \{n-1, n\}\}$ a partition of the non-edges of $G$. Let $vw, xy \in P$ with $vw \neq xy$. Since $n \ge 6$, there is at most one non-edge among $vx, vy, wx, wy$, so we have:
    \[
    d_G(v,w) + d_G(x,y) = 4, \quad
    d_G(v,x) + d_G(w,y) \le 3, \quad
    d_G(v,y) + d_G(w,y) \le 3.
    \]
    Thus $v,w,x,y$ does not satisfy the four point condition. This concludes the case. 
    
    \smallskip
    \noindent \textbf{(2)} {Let $G = K_n$ minus a perfect matching.}
    Let $\{1,2\},\, \{3,4\},\, \dots, \{n-1,n\}$ be the perfect matching, which we may assume without loss of generality. Let
    \[
    p \coloneqq p_1 \times p_3 \times \dots \times p_{n-1} \colon G \rightarrow \Gamma_1 \times \Gamma_3 \times \dots \times \Gamma_{n-1},
    \]
    where $p_i \colon G \rightarrow \Gamma_i$ is the path map based at vertex $i \in [n]$. The map $p$ is $1$-Lipschitz because it is a product of path maps. To see that $p$ is a tree embedding, let $v, w \in [n]$ be vertices. If one of $v, w$ is odd, say $v$, then we have $d_G(v,w) = d_{\Gamma_v}(p_v(v), p_v(w))$. If both $v,w$ are even, then let $x = v-1$, which is odd. Then we have $d_G(x,v) = 2$ and $d_G(x,w) = 1$ so $1 = d_G(v,w) = d_{\Gamma_x}(p_x(v), p_x(w))$. Thus $p$ is a tree embedding of $G$.

    Define $P = \{\{1, 2\}, \dots, \{n-1, n\}\}$ to be the perfect matching. Let $vw, xy \in P$ with $vw \neq xy$. Notice that $vx, vy, wx, wy$ are all edges of $G$, so we have:
    \[
    d_G(v,w) + d_G(x,y) = 4, \quad
    d_G(v,x) + d_G(w,y) = 2, \quad
    d_G(v,y) + d_G(w,y) = 2.
    \]
    Thus $v,w,x,y$ does not satisfy the four point condition. This concludes the case.

    \smallskip
    \noindent \textbf{(3)} {Let $G = Q_k$ be the $k$-dimensional hypercube with $n = 2^k$ vertices.}
    Let the vertices be $V = \{0,1\}^k$ zero-one vectors of length $k$ and the edges be $E = \{vw \mid ||v - w||_1 = 1\}$ pairs of sequences that differ in exactly one position. Let $V' = \{0,1\}^{k-1}$ be the zero-one sequences of length $k-1$. Define the map
    \[
    p \coloneqq \prod_{v \in V'} p_{(v,0)} \colon G \rightarrow \prod_{v \in V'} \Gamma_{(v,0)}
    \]
    where $p_x \colon G \rightarrow \Gamma_x$ is the path map based at $x \in V$ where we identify the sequences $((v_1, \dots, v_{k-1}), v_k) \equiv (v_1, \dots, v_k)$. Since $p_x$ is a path map, we have that their product $p$ is $1$-Lipschitz. To see that $p$ is a tree embedding fix $v, w \in V$. If the last coordinate is zero, say $v_k = 0$, then we have $d_G(v,w) = d_{\Gamma_v}(p_v(v), p_v(w))$, similarly for $w_k = 0$. Otherwise assume $v_k = w_k = 1$. Then define $x = (v_1, \dots, v_{k-1}, 0)$. It follows that $d_G(x,v) = 1$ and $d_G(x,w) = d_G(v,w) + 1$, so we have $d_G(v,w) = d_{\Gamma_x}(p_x(v), p_x(w))$. Thus, we have shown that $p$ is a tree embedding.

    Let $P = \{v \overline v \mid v \in V\}$ be a partition of $V$ into pairs where $\overline v$ is the coordinate-wise opposite of $v$ with coordinates $\overline v_i \coloneqq 1 - v_i$ for each $i \in [k]$. Let $v\overline v, w \overline w \in P$ with $v \overline v \neq w \overline w$. Since $v \neq w$ and $v \neq \overline w$, it follows that $v_i = w_i$ and $v_j = \overline w_j$ for some $i, j \in [k]$. So we have
    \[
    d(v,\overline v) + d(w,\overline w) = 2k, \quad
    d(v, w) + d(\overline v, \overline w) \le 2k-2, \quad
    d(v, \overline w) + d(\overline v, w) \le 2k-2.
    \]
    Thus $v, \overline v, w, \overline w$ does not satisfy the four point condition. This concludes the case.
    
    \smallskip
    \noindent \textbf{(4)} {Let $G = K_n$ minus two even vertex-disjoint cycles each of length at least $6$ that cover the vertices $G$.}
    Without loss of generality, let the $C_1$ be the cycle with edges $\{1,2\}, \{2,3\}, \dots, \{k-1,k\}, \{1,k\}$ and $C_2$ be the cycle with edges $\{k+1,k+2\}, \dots, \{n-1,n\}, \{k+1,n\}$, where $k \in [n]$ is even. Let
    \[
    p \coloneqq p_1 \times p_3 \times \dots \times p_{n-1} \colon G \rightarrow \Gamma_1 \times \Gamma_3 \times \dots \times \Gamma_{n-1},
    \]
    where $p_i \colon G \rightarrow \Gamma_i$ is the augmented path map based at vertex $i \in [n]$ which we define as follows.
    The space $\Gamma_i$ is a metric tree with vertices $\overline 1, \overline 2, \overline 3, \overline 4, \overline 5$ and edges $\overline 1 \overline 2, \overline 2 \overline 3, \overline 3 \overline 4, \overline 3 \overline 5$ whose lengths are defined to be 
    \[
    \ell(\overline 1 \overline 2) = 1
    \quad \text{and} \quad 
    \ell(\overline 2 \overline 3) = \ell(\overline 3 \overline 4) = \ell(\overline 3 \overline 5) = 1/2.
    \]
    Let $s, t$ be the neighbours of vertex $i$ in the cycles $C_1 \cup C_2$. Define the map
    \[
    p_i \colon G \rightarrow \Gamma_i, \quad
    p_i(x) = \begin{cases}
        \overline 1 & \text{if } x = i,\\
        \overline 4 & \text{if } x = s,\\
        \overline 5 & \text{if } x = t,\\
        \overline 2 & \text{otherwise}.
    \end{cases}
    \]
    The distance between $i$ and each vertex that is neither $s$ nor $n$ is one, so $p_i$ realises every distance involving $i$ and $p_i$ is $1$-Lipschitz. To see that $p$ is a tree embedding, let $v, w \in [n]$ be vertices. If one of $v, w$ is odd, say $v$, then we have $d_G(v,w) = d_{\Gamma_v}(p_v(v), p_v(w))$. If both $v,w$ are even, then $d_G(v,w) = 1$ and there exists an odd vertex $x \in [n]$ such that $xv \in E(C_1) \cup E(C_2)$. Then, by construction $p_x(v)$ is a leaf of $\Gamma_x$ and is at distance from the image of all other vertices of $G$. So we have $d_G(v,w) = 1 = d_{\Gamma_x}(p_x(v), p_x(w))$. Thus $p$ is a tree embedding of $G$.

    Define $P = \{\{1, 2\}, \{3,4\}, \{5,6\}, \dots, \{n-1, n\}\}$ a partition of the non-edges of $G$. Let $vw, xy \in P$ with $vw \neq xy$. Since each cycle has length at least six, there is at most one non-edge among $vx, vy, wx, wy$, so we have:
    \[
    d_G(v,w) + d_G(x,y) = 4, \quad
    d_G(v,x) + d_G(w,y) \le 3, \quad
    d_G(v,y) + d_G(w,y) \le 3.
    \]
    Thus $v,w,x,y$ does not satisfy the four point condition. This concludes the case.

    \smallskip
    \noindent \textbf{(5)} {Let $G = J_{2k,k}$ be the Johnson graph with $n = \binom{2k}{k}$ vertices.}
    Let the vertices of $G$ be $V = \binom{[2k]}{k}$ the $k$-subsets of $[2k]$ and the edges be $E = \{vw \mid  |v \cap w| = k-1\}$ pairs of subsets that intersect on $k-1$ elements. Let $V' = \{v \in V \mid 1 \in v\}$ be the vertices containing $1$. As a consequence we have $d_G(v,w) = k - |v \cap w|$. Define the map
    \[
    p \coloneqq \prod_{v \in V'} p_v \colon G \rightarrow \prod_{v \in V'} \Gamma_v
    \]
    where $p_v \colon G \rightarrow \Gamma_v$ is the path map based at $v$. Since $p_v$ is a path map, the product $p$ is $1$-Lipschitz. To see that $p$ is a tree embedding, fix $v, w \in V$. If $v \in V'$, then $d_G(v,w) = d_{\Gamma_v}(p_v(v), p_v(w))$, similarly if $w \in V'$. Otherwise assume $v \notin V'$ and $w \notin V'$. Since $v, w$ are $k$-subsets of $\{2,3, \dots, 2k\}$, by the pigeonhole principle, there exists $i \in v \cap w$. Define $x = (v \setminus \{i\}) \cup \{1\}$. It follows that $d_G(x,v) = 1$ and $d_G(x,w) = d_G(v,w) + 1$, so we have $d_G(v,w) = d_{\Gamma_x}(p_x(v), p_x(w))$. Thus, we have shown that $p$ is a tree embedding.

    Let $P = \{v \overline v \mid v \in V\}$ be a partition of $V$ into pairs where $\overline v = [2k] \setminus v$ is the complement of $v$. Let $v\overline v, w \overline w \in P$ with $v \overline v \neq w \overline w$. Since $v \neq w$ and $v \neq \overline w$, it follows that $v \cap w \neq \emptyset$ and $v \cap \overline w \neq \emptyset$. So we have
    \[
    d(v,\overline v) + d(w,\overline w) = 2k, \quad
    d(v, w) + d(\overline v, \overline w) \le 2k-2, \quad
    d(v, \overline w) + d(\overline v, w) \le 2k-2.
    \]
    Thus $v, \overline v, w, \overline w$ does not satisfy the four point condition. This concludes the case.

    In each case we provided an embedding of $G$ into $n/2$ trees and constructed a partition of the vertices into pairs such that no pair of pairs satisfies the four point condition. By the initial argument of the proof, we have that $\prank(G) = n/2$ for each graph and we are done.
\end{proof}

In the theorem above, we showed that the family of graphs consisting of $K_n$ minus two even vertex-disjoint cycles of length at least $6$ such that the vertices of the cycles cover the graph have rank $n/2$. If we relax the condition on the lengths of the cycles and allow cycles of length four, then the next example show that the rank may drop.

\begin{example}\label{example: K8 minus 2x C4}
    Let $G$ be the graph $K_8$ minus two vertex-disjoint copies of $C_4$. Let $1,2,3,4$ and $5,6,7,8$ be the vertices of the cycles. Since $G$ contains an induced $4$-cycle $1,5,2,6,1$, by Theorem~\ref{thm: phylogenetic rank one}, the graph $G$ has rank at least two. In fact, we can construct a rank-two embedding $G \rightarrow \Gamma_1 \times \Gamma_2$ as shown in Figure~\ref{fig: K8 minus 2x C4 embed}. In particular, we see that pairs of vertices with distance two such that $12$ and $34$ have their distance realised in the same tree $\Gamma_1$. This is not possible if the removed cycles have length at least six.

    \begin{figure}
        \centering
        \begin{tikzpicture}[
            thick,
            leaf/.style     = {circle, fill, inner sep=1.6pt},
            internal/.style = {circle, inner sep=0pt, outer sep=0pt, minimum size=0pt},
            centre/.style   = {circle, fill, inner sep=1.6pt},
            lbl/.style      = {font=\small, inner sep=2.5pt}
          ]
         
          %% ======================= first tree =======================
          % --- vertices ---
          \node[internal] (u1) at (-1.25  , 0  ) {};   % left  internal vertex (unlabelled)
          \node[internal] (v1) at ( 1.25  , 0  ) {};   % right internal vertex (unlabelled)
          \node[centre]   (m1) at ( 0  , 0  ) {};   % midpoint of the internal edge
          \node[leaf] (a1) at (-2.2, 1.2) {};
          \node[leaf] (a2) at (-2.2,-1.2) {};
          \node[leaf] (a3) at ( 2.2, 1.2) {};
          \node[leaf] (a4) at ( 2.2,-1.2) {};
          % --- edges ---
          \draw (a1) -- (u1) -- (a2);
          \draw (a3) -- (v1) -- (a4);
          \draw (u1) -- (m1) -- (v1);
          % --- labels ---
          \node[lbl, above left ] at (a1) {$1$};
          \node[lbl, below left ] at (a2) {$3$};
          \node[lbl, above right] at (a3) {$2$};
          \node[lbl, below right] at (a4) {$4$};
          \node[lbl, above=5pt  ] at (m1) {$5,6,7,8$};
          \node[font=\small] at (0,-1.25)  {$\Gamma_1$};
         
          %% ======================= second tree ======================
          % --- vertices ---
          \node[internal] (u2) at ( 6.75  , 0  ) {};   % left  internal vertex (unlabelled)
          \node[internal] (v2) at ( 9.25  , 0  ) {};   % right internal vertex (unlabelled)
          \node[centre]   (m2) at ( 8  , 0  ) {};   % midpoint of the internal edge
          \node[leaf] (b1) at ( 5.8, 1.2) {};
          \node[leaf] (b2) at ( 5.8,-1.2) {};
          \node[leaf] (b3) at (10.2, 1.2) {};
          \node[leaf] (b4) at (10.2,-1.2) {};
          % --- edges ---
          \draw (b1) -- (u2) -- (b2);
          \draw (b3) -- (v2) -- (b4);
          \draw (u2) -- (m2) -- (v2);
          % --- labels ---
          \node[lbl, above left ] at (b1) {$5$};
          \node[lbl, below left ] at (b2) {$7$};
          \node[lbl, above right] at (b3) {$6$};
          \node[lbl, below right] at (b4) {$8$};
          \node[lbl, above=5pt  ] at (m2) {$1,2,3,4$};
          \node[font=\small] at (8,-1.25)  {$\Gamma_2$};
        \end{tikzpicture}\vspace{-2mm}
        \caption{Coordinates of the tree embedding in Example~\ref{example: K8 minus 2x C4}}
        \label{fig: K8 minus 2x C4 embed}
    \end{figure}
\end{example}

\begin{proposition}\label{prop:bipartiteGraph}
    The complete bipartite graph $K_{n,m}$ has phylogenetic rank two for $n,m \ge 2$.
\end{proposition}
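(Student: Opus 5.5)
The plan is to prove the two inequalities separately.

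For the lower bound I would argue that $K_{n,m}$ contains an induced $4$-cycle, which is also an isometric subgraph. Write the parts as $A=\{a_1,\dots,a_n\}$ and $B=\{b_1,\dots,b_m\}$. Since $n,m\ge 2$, the vertices $a_1,b_1,a_2,b_2$ span a $C_4$. Its distances (adjacent pairs at distance $1$, opposite pairs at distance $2$) agree with those in $K_{n,m}$, so it is isometric. By \cref{prop: isometric subgraph rank bound} and \cref{prop: cycle has tree embedding number n/2}, $\prank(K_{n,m})\ge \prank(C_4)=2$. Alternatively, the four point condition fails directly on $a_1,a_2,b_1,b_2$: the three sums are $4,2,2$.

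For the upper bound I would build two star-shaped trees, one collapsing each side. Let $\Gamma_1$ be a star whose centre $c_1$ has $n$ leaves $\ell_1,\dots,\ell_n$ at distance $1$, and set $\iota_1(a_i)=\ell_i$ and $\iota_1(b_j)=c_1$ for all $j$. Then:
\begin{itemize}
\item $d_{\Gamma_1}(\iota_1(a_i),\iota_1(a_{i'}))=2$ for $i\ne i'$;
\item $d_{\Gamma_1}(\iota_1(a_i),\iota_1(b_j))=1$;
\item $d_{\Gamma_1}(\iota_1(b_j),\iota_1(b_{j'}))=0$.
\end{itemize}
Symmetrically, let $\Gamma_2$ be a star with $m$ leaves at distance $1$ from its centre $c_2$, send the $b_j$ to distinct leaves and every $a_i$ to $c_2$.

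It remains to check that the product map is an isometry. The distances in $K_{n,m}$ are $1$ across the parts and $2$ within a part. Each coordinate map is $1$-Lipschitz, since every coordinate distance is at most the graph distance. For an $A$–$A$ pair the distance $2$ is attained in $\Gamma_1$, for a $B$–$B$ pair in $\Gamma_2$, and for an $A$–$B$ pair the distance $1$ is attained in either tree. Hence the supremum equals $d_{K_{n,m}}$, so $\prank(K_{n,m})\le 2$.

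There is no real obstacle. The only point needing care is confirming that the $4$-cycle is isometric, so that the lower bound transfers from \cref{prop: cycle has tree embedding number n/2}. This uses the hypothesis $n,m\ge 2$; without it, $K_{1,m}$ is a tree and has rank $1$.
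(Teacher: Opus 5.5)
Your proof is correct and follows the paper's: the upper bound uses the same pair of stars, each collapsing one side of the bipartition to the centre and sending the other side to distinct leaves at distance $1$. The only difference is in the lower bound. The paper deduces it from the induced $4$-cycle via the rank-one classification \cref{thm: phylogenetic rank one}, whereas your argument (the $C_4$ is isometric, or directly that the four point condition fails on $a_1,a_2,b_1,b_2$ with sums $4,2,2$) is a more self-contained version of the same observation.
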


\begin{proof}
    Let us explicitly label the graph $K_{n,m}$ with vertices $V = \{a_1, \dots, a_n, b_1, \dots, b_m\}$ and edges $E = \{a_i b_j \mid i \in [n],\, j \in [m]\}$. Since $2 \le n, m$, the graph $K_{n,m}$ has an induced four-cycle on vertex set $\{a_1, a_2, b_1, b_2\}$. So by Theorem~\ref{thm: phylogenetic rank one}, we have that $K_{n,m}$ does not have phylogenetic rank one. Hence $K_{n,m}$ has phylogenetic rank at least two.

    Let $\Gamma_1 = (V_1, E_1)$ be the metric tree with vertices $V_1 = \{a, b_1, \dots, b_m\}$ and unit-length edges $E_1 = \{ab_j \mid j \in [m]\}$, and $\Gamma_2 = (V_2, E_2)$ be the tree with vertices $V_2 = \{a_1, \dots, a_n, b\}$ and unit-length edges $E_2 = \{a_ib \mid i \in [n]\}$. We define a graph embedding by
    \[
    \iota\colon K_{n,m} \rightarrow \Gamma_1 \times \Gamma_2,
    \quad 
    \iota(a_i) = (a, a_i) \text{ and }
    \iota(b_j) = (b_j, b) \text{ for all }
    i \in [n],\, j \in [m].
    \]
    It is straightforward to show that $\iota$ is a $1$-Lipshitz map. To show that the map defines an embedding, note that the diameter of $K_{n,m}$ is two. All length two paths are of the form $(a_i, b_j, a_k)$, which are mapped isometrically into $\Gamma_2$, or $(b_i, a_j, b_k)$, which are mapped isometrically into $\Gamma_1$. This concludes the proof.
\end{proof}

We will now classify the rank one graphs in Theorem~\ref{thm: phylogenetic rank one}, which intuitively says: all phylogenetic rank one graphs are ``trees of cliques'' (or more concretely, they are $1$-sums of cliques); and secondly the only obstructions to phylogenetic rank one are cycles of length four or more, and $K_4 \setminus e$, see Proposition~\ref{prop: cycle has tree embedding number n/2} and the following example. 

\begin{example}\label{example: K4 minus edge}Let $G = K_4 \setminus e$ be the complete graph $K_4$ with some edge removed. Explicitly, we take $V(G) = [4]$ and $E(G) = \{12, 13, 23, 14, 24\}$.
To see that $G$ does not satisfy the four point condition, observe that
\[
3 = d(1,2) + d(3,4) > d(1,3) + d(2,4) = d(1,4) + d(2,3) = 2. 
\]
Therefore, the metric graph $G$ must have phylogenetic rank at least two. If $p_1\colon G \rightarrow \Gamma_1$ and $p_3\colon G \rightarrow \Gamma_3$ are the path maps based at $1$ and $3$ respectively, then it is straightforward to show that $p_1 \times p_3\colon G \rightarrow \Gamma_1 \times \Gamma_3$ is a tree embedding of $G$. So we have that $\prank(G) = 2$.
\end{example}

We now recall the necessary terminology for Theorem~\ref{thm: phylogenetic rank one}. Let $G$ be a graph and $C$ a cycle of $G$. A \textit{chord} of $C$ is an edge of $G$ that connects two non-consecutive vertices of $C$. We say that a graph $G$ is \textit{chordal} if every cycle of $G$ of length at least four has a chord. Let $H$ be a graph. We say that $G$ is \textit{$H$-free} if no induced subgraph of $G$ is isomorphic to $H$.

Given two graphs $G$ and $H$, a \textit{$1$-sum} of $G$ and $H$ is a graph obtained from their disjoint union with one vertex in each of $G$ and $H$ identified. We say that $G$ is a \textit{$1$-sum of cliques} if it can be obtained from a clique by performing a sequence of $1$-sums with cliques. For instance, a tree is $1$-sum of cliques as it can be obtained from a single vertex by performing a sequence of $1$-sums with $K_2$. Note that if $G$ is a $1$-sum of cliques, then the set of maximal cliques of $G$ are exactly cliques required to form $G$ with a sequence of $1$-sums.

\begin{theorem}\label{thm: phylogenetic rank one}
    Let $G$ be a graph. Then the following are equivalent:
    \begin{enumerate}
        \item $G$ has phylogenetic rank one,
        \item $G$ is free of $K_4 \setminus e$ and the cycle graphs $C_i$ for all $i \ge 4$,
        \item $G$ is chordal and $(K_4 \setminus e)$-free,
        \item $G$ is a $1$-sum of cliques.
    \end{enumerate}
\end{theorem}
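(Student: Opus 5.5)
I will prove the cycle of implications (1)$\Rightarrow$(2)$\Rightarrow$(3)$\Rightarrow$(4)$\Rightarrow$(1).

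\textbf{(1)$\Rightarrow$(2).} The key observation is that an induced copy of $C_i$ ($i\ge 4$) or of $K_4\setminus e$ is not automatically isometric. So Proposition~\ref{prop: isometric subgraph rank bound} cannot be applied directly, and I will work with the four point condition instead. If $\prank(G)=1$, then $d_G$ is itself a tree metric, so every four vertices satisfy Proposition~\ref{prop: four point condition}.
\begin{itemize}
\item For an induced $K_4\setminus e$ on $\{1,2,3,4\}$ with non-edge $34$: the distances are $d(3,4)=2$ and all others equal $1$, exactly as in Example~\ref{example: K4 minus edge}. The sums are $3,2,2$, so the maximum is attained once, which violates the condition.
\item For an induced cycle, take a \emph{shortest} induced cycle $C$ of length $i\ge4$ in $G$. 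I then aim to show that $G$ contains a bad quadruple.
\end{itemize}
Shortest-ness helps because any two vertices of $C$ at cycle-distance $2$ have $G$-distance exactly $2$: they are non-adjacent, since $C$ is induced.

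For $i=4$ the cycle gives $d=2,2$ on the diagonals and $1$ on the sides, so the sums are $4,2,2$ and the condition fails. For $i\ge5$, I would argue that a shortest induced cycle of length $\ge 5$ is isometric: a shortcut path in $G$ would combine with an arc of $C$ into a shorter cycle, and a shortest such cycle, after removing chords, yields a shorter induced cycle of length $\ge4$. I expect this step to be the main technical obstacle, because the chords of the closed walk must be handled carefully. Once $C$ is isometric, the antipodal quadruples from the proof of Proposition~\ref{prop: cycle has tree embedding number n/2} violate the four point condition.

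A cleaner alternative avoids isometry altogether. Choose four vertices $a,b,c,e$ on $C$ in cyclic order, roughly equally spaced so that consecutive ones are at distance $2$ along $C$ where possible. Then use only the facts that non-consecutive cycle vertices are non-adjacent and that $d_G\le d_C$. I will try this first, falling back on the isometric-cycle argument if the inequalities do not close.

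\textbf{(2)$\Rightarrow$(3).} This is immediate, since chordal means free of induced $C_i$ for $i\ge4$.

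\textbf{(3)$\Rightarrow$(4).} I will show that in a chordal, $(K_4\setminus e)$-free graph every block (maximal $2$-connected subgraph) is a clique; a connected graph whose blocks are cliques is a $1$-sum of cliques, by induction on the block-cut tree. The argument runs as follows.
\begin{enumerate}
\item Let $B$ be a $2$-connected block that is not a clique.
\item Any two vertices of $B$ lie on a common cycle, so chordality gives triangles everywhere.
\item Pick a non-edge $xy$ in $B$ at distance $2$ with common neighbour $z$.
\item By $2$-connectivity, $B\setminus z$ contains an $x$--$y$ path. Choose a shortest such path; together with $z$ it forms a cycle.
\item Chordality forces chords of this cycle from $z$, so the path neighbour $u$ of $x$ is adjacent to $z$.
\item Then $\{x,u,z\}$ together with the next vertex of the path (or $y$) forms an induced $K_4\setminus e$ or diamond configuration, giving a contradiction.
\end{enumerate}
A standard alternative is the fact that chordal graphs in which every two maximal cliques share at most one vertex are block graphs, and $(K_4\setminus e)$-freeness gives exactly this clique-intersection property. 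I would cite or reprove this.

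\textbf{(4)$\Rightarrow$(1).} Each clique has rank one by Example~\ref{example: Kn has rank 1}. A $1$-sum is a graph with a cut vertex, so Proposition~\ref{prop: separable graph rank} and induction on the number of cliques give $\prank(G)=1$.
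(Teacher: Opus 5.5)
\textbf{Gap in (1)$\Rightarrow$(2).} The gap is in the case of induced cycles of length $i\ge 5$, and neither of your two routes closes it as stated.

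Your claim that a shortest induced cycle of length $\ge 5$ is isometric is false in general. Join a hub to every vertex of a $6$-cycle. The rim is the only, hence the shortest, induced cycle of length $\ge 4$. Yet the hub puts antipodal rim vertices at distance $2$. The cycles formed by this shortcut and either arc are fully triangulated by chords at the hub, so discarding chords leaves only triangles.

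For the isometry route you must use $(K_4\setminus e)$-freeness. You may assume it after your first bullet, but you never invoke it. This is what the paper does: it proves Lemma~\ref{lemma: isometric large cycles}, that a $(K_4\setminus e)$-free graph with an induced $C_n$, $n\ge 4$, contains an isometric $C_m$ with $4\le m\le n$. The proof is a case analysis of chords between the shortcut path and the arc, each case yielding a shorter induced cycle or an induced $K_4\setminus e$. The paper then concludes with Propositions~\ref{prop: isometric subgraph rank bound} and~\ref{prop: cycle has tree embedding number n/2}.

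Your ``cleaner alternative'' also fails. The facts you allow yourself give only the lower bound $d_G\ge 2$ for a non-adjacent pair, which is too weak for long cycles. For example, take an induced $8$-cycle $0,1,\dots,7$ and add one new vertex adjacent to $0$ and $4$, and another adjacent to $2$ and $6$. The graph is triangle-free, so even $(K_4\setminus e)$-free. But the equally spaced vertices $0,2,4,6$ are pairwise at distance $2$, so all three sums equal $4$ and the four point condition holds on them.

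\textbf{How to repair it.} Your four-point strategy works if you choose the quadruple adaptively.
\begin{enumerate}
\item If $C$ is isometric, you are done by Propositions~\ref{prop: isometric subgraph rank bound} and~\ref{prop: cycle has tree embedding number n/2}.
\item Otherwise, let $k$ be the least value of $d_C(x,y)$ over pairs of $C$ with $d_G(x,y)<d_C(x,y)$. Since $C$ is induced, $k\ge 3$.
\item Let $x=u_0,u_1,\dots,u_k=y$ be the short arc. Minimality of $k$ gives $d_G(u_0,u_{k-1})=d_G(u_1,u_k)=k-1$ and $d_G(u_1,u_{k-1})=k-2$.
\item On $\{u_0,u_1,u_{k-1},u_k\}$ the three sums are therefore $2$, $2k-2$, and at most $2k-3$. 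The maximum is attained once, so the four point condition fails.
\end{enumerate}
This needs neither $(K_4\setminus e)$-freeness nor the lemma, and would make your (1)$\Rightarrow$(2) shorter than the paper's.

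\textbf{Other implications.} These are fine. Your block argument for (3)$\Rightarrow$(4) is a valid alternative to the paper's induction along a perfect elimination ordering. In its last step you should also note that $z$ is adjacent to the vertex after $u$ on the path. This follows from the same chordality argument applied to the cycle through $z$, $u$ and the rest of the path. Your (4)$\Rightarrow$(1) via Proposition~\ref{prop: separable graph rank} amounts to the paper's wedge-of-trees argument.
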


Before proving the result, we require a small lemma.

\begin{lemma}\label{lemma: isometric large cycles}
    Let $G$ be a $(K_4 \setminus e)$-free graph. If $G$ contains an induced $n$-cycle $C$ for some $n \ge 4$, then $G$ contains an isometric $m$-cycle $C'$ for some $4 \le m \le n$.
\end{lemma}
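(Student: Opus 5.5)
We argue by strong induction on $n$, the length of the induced cycle; equivalently, among all induced cycles of length at least $4$ in $G$ we pick one of minimal length and show it is isometric. Write $C = (c_0, c_1, \dots, c_{n-1})$. If $C$ is isometric there is nothing to prove. Otherwise there are two vertices $c_a, c_b$ of $C$ with $d_G(c_a,c_b) < d_C(c_a,c_b)$. Among all such pairs we choose one minimising $d_G(c_a,c_b)$, and we take a shortest path $Q$ in $G$ from $c_a$ to $c_b$. By the minimality of $d_G(c_a,c_b)$ we may assume that the internal vertices of $Q$ avoid $C$. Indeed, an internal vertex $c_e$ of $Q$ would split $Q$ into two shorter geodesics, and at least one of the pairs $(c_a,c_e)$ or $(c_e,c_b)$ would then still be a shortcut pair with smaller $G$-distance. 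Since $C$ is induced, $c_a$ and $c_b$ are not adjacent, so $Q$ has length $\ell \ge 2$.

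Next we form two closed walks. Let $R_1$ and $R_2$ be the two arcs of $C$ between $c_a$ and $c_b$. Each $R_i$ has length at least $d_C(c_a,c_b) > \ell$, and the two lengths sum to $n$. We build the cycles $Q \cup R_1$ and $Q \cup R_2$; since $Q$ is internally disjoint from $C$, these are genuine cycles. Each has length strictly less than $n$ but at least $4$, because $|R_i| \ge \ell + 1 \ge 3$. However, they need not be induced, since there may be chords between the interior of $Q$ and the interior of $R_i$. To handle this, we take a shortest path from $c_a$ to $c_b$ through a suitable subgraph, or more simply pass to a minimal induced cycle inside the vertex set of $Q \cup R_1$ that contains an edge of $Q$. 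We then argue that it still has length at least $4$ and less than $n$, and apply induction.

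The main obstacle is the case where every resulting induced cycle is a triangle. This is exactly where $(K_4 \setminus e)$-freeness must be used. Consider the smallest configuration: $\ell = 2$, with $Q = c_a\, q\, c_b$. Because chords only run from $q$ to $C$, the vertex $q$ together with its neighbours on $C$ cuts $C$ into arcs. Each arc between consecutive neighbours of $q$ on $C$, closed up through $q$, is an induced cycle. If some arc has length at least $3$, this gives an induced cycle of length between $4$ and $n-1$, and we are done by induction. Otherwise every arc has length at most $2$. Arcs of length $1$ mean that $q$ is adjacent to two consecutive vertices $c_i, c_{i+1}$. If two consecutive arcs both have length $1$, say $q \sim c_{i-1}, c_i, c_{i+1}$, then $\{q, c_{i-1}, c_i, c_{i+1}\}$ induces $K_4 \setminus e$, because $c_{i-1}c_{i+1}$ is not an edge since $C$ is induced and $n \ge 4$. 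This contradicts the hypothesis. Arcs of length $2$ give induced $4$-cycles through $q$; such a cycle has length $4 \le n$, and if $n > 4$ we finish by induction. Finally, if $n = 4$, a shortcut would force $d_G < 2$ between antipodal vertices, which is impossible, so the case $n = 4$ is trivially isometric.

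For general $\ell$ we would try the same approach: take the vertex $q$ on $Q$ adjacent to $c_a$ and examine its neighbours on $C$ in the same way. Alternatively, we observe that the inductive step only requires finding some shorter induced cycle of length at least $4$, and the arc-decomposition argument applied to the whole path $Q$ yields one. In either case, the single configuration that blocks the argument is a vertex adjacent to three consecutive cycle vertices, and this is excluded by $(K_4\setminus e)$-freeness.
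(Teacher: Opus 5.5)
Your setup is sound, and choosing the shortcut pair with $d_G(c_a,c_b)$ minimal, so that $Q$ is internally disjoint from $C$, is cleaner than the paper, which leaves this point implicit. Your treatment of the case $\ell=2$ is also correct. The gap is the case $\ell\ge 3$, which is where the real work lies, and there you only assert the conclusion.

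Your claim that a minimal induced cycle in $V(Q\cup R_1)$ containing an edge of $Q$ ``still has length at least $4$'' is false as stated. If $q_1$ is adjacent to $c_{a+1}\in R_1$, the triangle $c_a q_1 c_{a+1}$ is such a cycle. The arc decomposition at the single vertex $q_1$ can also yield nothing. If the neighbours of $q_1$ on $C$ are just $c_a$, or just $c_a$ and $c_{a+1}$, you get either no arc or a triangle together with a cycle of length $n+1$. You then have to follow $Q$ further, and chords from different internal vertices $q_1,\dots,q_{\ell-1}$ interact. For example, a cycle vertex adjacent to the three consecutive path vertices $c_a,q_1,q_2$ gives another induced $K_4\setminus e$, which the paper has to treat separately. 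In your setting that configuration is in fact excluded by minimality: $q_2\sim c_{a+1}$ would make $(c_{a+1},c_b)$ a shortcut pair with $G$-distance at most $\ell-1$. But you never say this. Your closing claim that the only obstruction is a vertex adjacent to three consecutive cycle vertices is therefore unsupported.

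The paper fills exactly this step with its Claim. It writes the shortest path as $v_0,\dots,v_s$ and the arc as $w_0,\dots,w_t$, and takes the chord $v_iw_j$ with $i$ minimal, then $j$ minimal. From this it reads off an induced cycle of length at least $4$ through the initial segments (or, when $i=j=1$, through a further chord or around the far end). Otherwise it exhibits an induced $K_4\setminus e$ on $\{v_0,v_1,w_1,w_2\}$ or on $\{v_0,v_1,v_2,w_1\}$.

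Alternatively, you can finish your own setup in one stroke:
\begin{itemize}
\item $G[V(Q\cup R_1)]$ is $2$-connected, since it contains the Hamiltonian cycle $Q\cup R_1$, and it is not complete, since $c_a\not\sim c_b$.
\item If it contained no induced cycle of length at least $4$, it would be chordal and $(K_4\setminus e)$-free. By (3)$\Rightarrow$(4) of Theorem~\ref{thm: phylogenetic rank one}, whose proof does not use this lemma, it would be a $1$-sum of cliques.
\item A $2$-connected $1$-sum of cliques is a clique, which is a contradiction.
\end{itemize}
Hence $V(Q\cup R_1)$ contains an induced cycle whose length lies between $4$ and $\ell+|R_1|<n$, and induction applies. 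Without one of these arguments, your proof covers only $\ell=2$.
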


\begin{proof}
    We prove the result by induction on $n$. For the base case, assume $n = 4$. We show that $C$ is an isometric subgraph. Suppose by contradiction that $d_C(x,y) \neq d_G(x,y)$ for some $x,y \in V(C)$. Since $C$ is a subgraph of $G$, we must have $d_G(x,y) < d_C(x,y)$. Since $C$ has diameter $2$, we have that $d_C(x,y) \in \{1,2\}$. So we must have $d_G(x,y) = 1$ and $d_C(x,y) = 2$. So $xy \in E(G)$ is an edge of $G$ and $C$ is not an induced cycle, which is a contradiction. Therefore $C$ is an isometric subgraph.

    For the induction step, suppose $n \ge 5$. If $C$ is an isometric subgraph, then we are done, so assume that $C$ is not an isometric subgraph of $G$. Then there exist vertices $x, y \in V(C)$ such that $d_C(x,y) \neq d_G(x,y)$. Since $C$ is a subgraph of $G$, it follows that $d_G(x,y) < d_C(x,y)$. By definition of the metric, there is an induced path $P$ in $G$ from $x$ to $y$ of length $d_G(x,y)$. Since $C$ is an induced subgraph of $G$, it follows that $x,y$ are not adjacent in $G$, so $P$ has length at least two. Let $Q$ be longer of the two induced paths in $C$ from $x$ to $y$. Since $n \ge 5$, it follows that the length of $Q$ is at least $3$.

    Let $H$ be the induced subgraph of $G$ with vertices $V(P) \cup V(Q)$. If $H$ is an induced cycle, i.e., $H = P \cup Q$, then notice that its length $n'$ satisfies $4 \le n' < n$. The inequality $n' < n$ follows from the assumption that the length of $P$ is strictly smaller than $d_C(x,y)$. The inequality $4 \le n'$ follows from the fact that $Q$ is at least three. So by induction, $G$ contains an isometric $m$-cycle for some $4 \le m \le n'$.

    So, we may assume that $H$ is not an induced cycle. Let us label the vertices along the paths as follows
    \[
    P \colon x \eqqcolon v_0 \rightarrow v_1 \rightarrow \dots \rightarrow v_s \coloneqq y
    \quad \text{and} \quad
    Q \colon x \eqqcolon w_0 \rightarrow w_1 \rightarrow \dots \rightarrow w_t \coloneqq y.
    \]
    Since $P$ and $Q$ are induced paths of $G$, it follows that there are no edges $v_iv_j$ or $w_iw_j$ in $H$. In particular, the neighbourhood of $x$ in $H$ is exactly $\{v_1, w_1\}$. Similarly, the neighbourhood of $y$ in $H$ is $\{v_{s-1}, w_{t-1}\}$.
    Since the length of $Q$ is at least $3$, we have $t \ge 3$ and since the length of $P$ is at least $2$, we have $s \ge 2$.
    Since $H$ is not an induced cycle, there exists an edge $v_iw_j$ for some $i \in [s-1]$ and $j \in [t-1]$. We assume that $i$ is the minimum such value and $j$ is taken to be the minimum for $w_j$ in the neighbourhood of $v_i$. 
    
    \begin{claim}
        $H$ contains an induced cycle of length at least four.
    \end{claim}

    \begin{proof}
    Let us assume that $j > 1$. By the minimality of $i$ and $j$, there is an induced cycle of length at least four: $
    w_0, w_1, \dots, w_j, v_i, v_{i-1}, \dots, v_0 = w_0
    $.
    So from now on, we may assume $j = 1$.

    Let us assume that $i > 1$. By the minimality of $i$, there is an induced cycle of length at least four: $
    v_0, v_1, \dots, v_i, w_1, w_0 = v_0
    $.
    So from now on, we may assume $i = 1$.

    Assume that $v_1$ has another neighbour $w_{j'}$ for some $j' \in [t-1]$. Take $j'$ to be the minimum such value with $j' > 1$. We must have that $j' > 2$ because if $j' = 2$ then the vertices $v_0, v_1, w_1, w_2$ give rise to an induced copy of $K_4 \setminus e$, which contradicts our initial assumption that $G$ is $(K_4 \setminus e)$-free. Since $j' > 2$, there is an induced cycle of length at least four:
    $
    v_i, w_j, w_{j+1}, \dots, w_{j'}, v_i
    $. So from now on, we may assume that the only neighbours of $v_i$ are $v_{i-1}, v_{i+1}, w_j = w_1$. So from now on we may assume that the neighbours of $v_1$ are $v_0, v_2, w_1$.

    Assume that there are no other edges $v_{i'}w_{j'}$ for $i' \in [s-1]$ and $j' \in [t-1]$. Then, since $t \ge 3$, we have a cycle of length at least four: $v_1, v_2, \dots, v_s = w_t, w_{t-1}, \dots, w_1, v_1$. So from now on, we may assume there is another edge $v_{i'}w_{j'}$ with $i' \in [s-1]$ and $j' \in [t-1]$.

    If $i' = 2$, then we must have $j' > 1$ otherwise the set of vertices $v_0, v_1, v_2, w_1$ would give rise to an induced copy of $K_4 \setminus e$, which contradicts the initial assumption that $G$ is $(K_4 \setminus e)$-free. Thus $j' > 1$ and, by minimality of $i'$ and $j'$, there is an induced cycle of length at least four: $v_1, v_2, \dots, v_{i'}, w_{j'}, w_{j'-1}, \dots, w_1, v_1$. So from now on, we may assume that $i' > 2$.

    Finally, by the minimality of $i'$, we have an induced cycle of length at least four: $v_1, v_2, \dots, v_{i'}, w_{j'}, w_{j'-1}, \dots, w_j, v_i$. Note that in this case it is possible to have $j = j'$. This concludes the proof of the claim.
    \end{proof}

    So $H$ contains an induced cycle of length at least four, say on $n''$ vertices with $4 \le n'' < n' < n$. So by induction, we have that $G$ contains an isometric $m$-cycle for some $4 \le m \le n'' < n$. This concludes the proof of the result. 
\end{proof}

We are now ready to prove the classification of rank one graphs.

\begin{proof}[Proof of Theorem~\ref{thm: phylogenetic rank one}] $(1) \implies (2)$.
    We prove the contrapositive. Assume that $G$ either contains an induced copy of $K_4 \setminus e$ or a cycle $C_n$ for $n\ge 4$. 

    Suppose that $G$ contains an induced copy $H$ of $K_4 \setminus e$. We will show that $H$ is an isometric subgraph of $G$. If $H$ is not an isometric subgraph of $G$ then there exist vertices $x,y \in V(H)$ with $d_H(x,y) \neq d_G(x,y)$. Since the diameter of $H$ is two, we must have $d_H(x,y) = 2 > 1 = d_G(x,y)$. But this means $x,y$ are adjacent in $G$ so $H$ is not an induced subgraph, which is a contradiction. Thus $H$ is an isometric subgraph. But then, by Example~\ref{example: K4 minus edge}, we have that $K_4 \setminus e$ has rank two, so by Proposition~\ref{prop: isometric subgraph rank bound} it follows that $\prank(G) \ge \prank(H) \ge 2$, so $G$ does not have rank one. So, from now on, we may assume that $G$ is $(K_4\setminus e)$-free.

    Suppose that $G$ contains an induced $n$-cycle with $n \ge 4$. Then by Lemma~\ref{lemma: isometric large cycles}, there is an induced $m$-cycle $C$ with $m \ge 4$ such that $C$ an isometric subgraph of $G$. By Proposition~\ref{prop: cycle has tree embedding number n/2} and Proposition~\ref{prop: cycle has tree embedding number n/2}, it follows that $\prank(G) \ge \prank(C) \ge 2$. So $G$ does not hand rank one. This concludes the proof of this implication.

    \medskip \noindent
    $(2) \implies (3)$. This implication follows immediately from the definition.

    \medskip \noindent $(3) \implies (4)$.
    Assume that $G$ is chordal and contains no copies of $K_4 \setminus e$. We will use the well-known characterisation of chordal graphs in terms of \textit{perfect elimination orderings}, which says that $G$ is chordal if and only if there is an ordering of the vertices $v_1 < v_2 < \dots < v_n$ of $G$ such that for each $i \in [n]$ the vertices greater than $v_i$ in its neighbourhood form a clique, explicitly the set $\{v_j \in V(G) \mid v_i < v_j,\, v_iv_j \in E(G)\}$ is a clique of $G$. For ease of notation let us write $N_G(v) = \{w \in V(G) \mid vw \in E(G)\}$ for the neighbourhood of $v$ in $G$, and note that $v \notin N_G(v)$.

    To show that $G$ is a $1$-sum of cliques, we proceed by induction on $n$ the number of vertices of $G$. The base case is the graph with one vertex, which is trivially a $1$-sum of cliques. For the induction step, let us assume that we have a perfect elimination order $v_1 < \dots < v_{n-1} < v_{n}$ and that there is a $1$-sum decomposition of $G[V']$ into cliques where $V' = \{v_2, \dots, v_n\}$. We now check that $G$ has a $1$-sum decomposition into cliques by taking cases on the size of neighbourhood $N_G(v)$ of $v$.

    \smallskip \noindent 
    \textbf{Case 1.} Assume $|N_G(v)| = 1$, then $G$ is a $1$-sum of $G[V']$ and $K_2$, and we are done.

    \smallskip \noindent
    \textbf{Case 2.} Assume that $|N_G(v)| \ge 2$. Then, by the perfect elimination ordering, we have that $N_G(v)$ is a clique of $G[V']$. Next we show that $N_G(v)$ must be a clique of the $1$-sum decomposition of $G[V']$, i.e., a maximal clique of $G[V']$. Since $N_G(v)$ is a clique, it must be contained in a maximal clique of $G[V']$. If $N_G(v)$ is not maximal, then take $x$ be any vertex of that maximal clique that is not in $N_G(v)$ and let $y,z \in N_G(v)$ be any two distinct vertices. Then the set $\{v,x,y,z\}$ forms an induced $K_4 \setminus e$, a contradiction. Thus $N_G(v)$ is a maximal clique of $G$. So, by adding $v$ to the clique $N_G(v)$ in the $1$-sum decomposition of $G[V']$, we obtain a $1$-sum decomposition of $G$ into cliques.

    \medskip \noindent
    $(4) \implies (1)$. Assume that $G$ is a $1$-sum of cliques. We will show that $G$ embeds into a metric tree by induction on the number of maximal cliques. If $G$ has one clique, i.e., $G = K_n$, then it embeds into the star graph, see Example~\ref{example: Kn has rank 1}. For the induction step, assume that $G$ is a $1$-sum of at least two cliques. Let $G'$ be a maximal clique of $G$. Then $G$ is the $1$-sum of $G'$ and a graph $H$ with one fewer maximal cliques than $G$. By induction we have that $H$ embeds into a tree $\Gamma$ and $G'$ embeds into a tree $\Lambda$. It is easy to see that a $1$-sum that $G$ embeds into a $1$-sum of $\Gamma$ and $\Lambda$, which is a tree. Thus $G$ has phylogenetic rank one. This completes the proof.
\end{proof}

%%% Local Variables:
%%% mode: LaTeX
%%% TeX-master: "graph_embedding_paper"
%%% End:

\section{A heuristic algorithm}\label{sec:heuristicAlgorithm}

In this section, we describe our greedy algorithm for constructing an embedding $\iota\colon G\rightarrow \Gamma_1 \times \dots \times \Gamma_r$.  It starts with an empty partial map $G\dashrightarrow \emptyset$.  At each step of the iteration, we extend the current partial map $\iota G\dashrightarrow \Gamma_1 \times \cdots \times \Gamma_k$, either by adding a new metric tree $\Gamma_{k+1}$ or by extending one $\Gamma_i$, until it eventually becomes an isometry.

To phrase our algorithm, we require the following con to realize one unrealized distance $d_G(v_i,v_j)$,  in the following sense:

\begin{definition}
  \label{def:greedyExtension}
  Let $G \overset{\iota}{\dashrightarrow} \Gamma$ be a partial $1$-Lipschitz map, and let $v_i,v_j\in V$ be two vertices of $G$.  An \emph{($1$-Lipschitz) extension realizing distance $d_G(v_i,v_j)$} is a partial map $\iota'\colon G\dashrightarrow \Gamma'$, where
  \begin{enumerate}
    \item $\Gamma'\supseteq\Gamma$ is an extension of $\Gamma$,
    \item $\iota'|_{\mathrm{domain}(\iota)} = \iota$,
    \item $v_i,v_j\in\mathrm{domain}(\iota')$, and
    \item $d_{\Gamma'}(\iota'(v_i),\iota'(v_j))=d_G(v_i,v_j)$.
  \end{enumerate}
\end{definition}

This extension can happen in one of four ways, outlined in Algorithms \ref{alg:extension1},  \ref{alg:extension2}, \ref{alg:extension3}, and \ref{alg:extension4}.  All algorithms contain steps where choices have to be made, which we address in \cref{rem:extensionChoices}.

\begin{algorithm}[\texttt{extension\_type\_1}]\label{alg:extension1}\
  \begin{algorithmic}[1]
    \REQUIRE{$(G\overset{\iota}{\dashrightarrow}\Gamma, \{v_i,v_j\}\in \binom{V}{2})$, where
      \begin{enumerate}
        \item $G\overset{\iota}{\dashrightarrow}\Gamma$ is a partial $1$-Lipschitz map into a metric tree $\Gamma$,
        \item $v_i\notin \domain(\iota)$,
        \item $v_j\in\domain(\iota)$.
      \end{enumerate}}
    \ENSURE{$(p,G\overset{\iota'}{\dashrightarrow}\Gamma')$, where
      \begin{enumerate}
        \item $p\in\{\texttt{true},\texttt{false}\}$ indicates whether the extension was successful,
        \item if $p=\texttt{true}$, then $G\overset{\iota'}{\dashrightarrow}\Gamma'$ an extension realizing distance $d_G(v_i,v_j)$.
        \item if $p=\texttt{false}$, then $G\overset{\iota'}{\dashrightarrow}\Gamma'=G\overset{\iota}{\dashrightarrow}\Gamma$.
      \end{enumerate}}
    \FOR{$\{a,b\}\in E(\Gamma)$}
    \STATE Consider the following linear system on the variables $x_a,x_b,x_c$:
    {
      \allowdisplaybreaks
      \setlength{\jot}{1pt}
      \begin{align}
        \notag & x_a\geq 0, x_b\geq 0, x_c\geq 0, x_a+x_c = d_{\Gamma}(a,b), \\
        \notag & d_\Gamma(\iota(v_j),a)) + x_a + x_c = d_G(v_i,v_j) \text{ if } d_\Gamma(\iota(v_j),a)<d_\Gamma(\iota(v_j),b)\\
        \notag & d_\Gamma(\iota(v_j),b)) + x_b + x_c = d_G(v_i,v_j) \text{ if } d_\Gamma(\iota(v_j),a)>d_\Gamma(\iota(v_j),b)\\
        \label{eq:systemForType1} & d_\Gamma(\iota(v),a)) + x_a + x_c \leq d_G(v_i,v)\\
        \notag & \hspace{25mm}\text{for } v\in\domain(\iota) \text{ with } d_\Gamma(\iota(v),a)<d_\Gamma(\iota(v),b)\\
        \notag & d_\Gamma(\iota(v),b)) + x_b + x_c \leq d_G(v_i,v)\\
        \notag & \hspace{25mm}\text{for } v\in\domain(\iota) \text{ with } d_\Gamma(\iota(v),a)>d_\Gamma(\iota(v),b)
      \end{align}%
    }\vspace{-1em}
    \IF{System \eqref{eq:systemForType1} has a solution $(x_a,x_b,x_c)\in\RR^3_{\geq 0}$}
    \STATE Let $\Gamma'$ be $\Gamma$ but with the following added (see \cref{fig:extensions} (1)):
    \begin{enumerate}
      \item \hspace{-10mm} a vertex $o$ on edge $\{a,b\}$ at distance $x_a$ from $a$ and $x_b$ from $b$,
      \item \hspace{-10mm} a vertex $c$, and
      \item \hspace{-10mm} an edge $\{c,o\}$ of length $x_c$.
    \end{enumerate}
    \STATE Let $G\overset{\iota'}{\dashrightarrow}\Gamma'$ be $G\overset{\iota}{\dashrightarrow}\Gamma$ but with $v_i$ mapped to $c$.
    \RETURN{$(\texttt{true}, G\overset{\iota'}{\dashrightarrow}\Gamma')$}
    \ENDIF
    \ENDFOR
    \RETURN{$(\texttt{false}, G\overset{\iota}{\dashrightarrow}\Gamma)$}
  \end{algorithmic}
\end{algorithm}

\begin{figure}[t]
  \centering
  \begin{tikzpicture}
    \node (type1) at (0,0)
    {
      \begin{tikzpicture}[x={(1.5,0)},y={(0,1.5)}]
        \useasboundingbox (-1,0) rectangle (1,1);
        \coordinate (o) at (0,0);
        \coordinate (a) at (-1,0);
        \coordinate (b) at (1,0);
        \coordinate (c) at (0,1);
        \coordinate (vj) at (-0.5,0.75);
        \draw[very thick]
        (a) -- node[below] {$x_a$} (o)
        (b) -- node[below] {$x_b$} (o);
        \draw[very thick, blue!50!black]
        (c) -- node[black,left,pos=0.6] {$x_c$} (o);
        \draw[dashed, very thick]
        (vj) to[bend right=45] (a)
        (vj) to[bend left=30] (b);
        \fill
        (o) circle (2.5pt) node[below] {$o$}
        (a) circle (2.5pt) node[below] {$a$}
        (b) circle (2.5pt) node[below] {$b$}
        (vj) circle (2.5pt) node[above] {$\iota(v_j)$};
        \fill[blue!50!black]
        (c) circle (2.5pt) node[right] {$c=\iota(v_i)$};
      \end{tikzpicture}
    };
    \node[below,yshift=-4mm] at (type1.south) {(1) \texttt{extension\_type\_1}};
    \node (type2) at (7,0)
    {%
      \begin{tikzpicture}[x={(1.5,0)},y={(0,1.5)}]
        \useasboundingbox (-2.25,0) rectangle (2.25,1);
        \coordinate (o1) at (-1.25,0);
        \coordinate (a1) at ($(o1)+(-1,0)$);
        \coordinate (b1) at ($(o1)+(1,0)$);
        \coordinate (c1) at ($(o1)+(0,1)$);
        \coordinate (o2) at (1.25,0);
        \coordinate (a2) at ($(o2)+(-1,0)$);
        \coordinate (b2) at ($(o2)+(1,0)$);
        \coordinate (c2) at ($(o2)+(0,1)$);
        \draw[very thick]
        (a1) -- node[below] {$x_{a,1}$} (o1)
        (b1) -- node[below] {$x_{b,1}$} (o1)
        (a2) -- node[below] {$x_{a,2}$} (o2)
        (b2) -- node[below] {$x_{b,2}$} (o2);
        \draw[very thick, blue!50!black]
        (c1) -- node[black,left] {$x_{c,1}$} (o1)
        (c2) -- node[black,left] {$x_{c,2}$} (o2);
        \fill
        (o1) circle (2.5pt) node[below] {$o_1$}
        (a1) circle (2.5pt) node[below] {$a_1$}
        (b1) circle (2.5pt) node[below] {$b_1$}
        (o2) circle (2.5pt) node[below] {$o_2$}
        (a2) circle (2.5pt) node[below] {$a_2$}
        (b2) circle (2.5pt) node[below] {$b_2$};
        \fill[blue!50!black]
        (c1) circle (2.5pt) node[right] {$c_1=\iota(v_i)$}
        (c2) circle(2.5pt) node[right] {$c_2=\iota(v_j)$};
      \end{tikzpicture}
    };
    \node[below,yshift=-4mm] at (type2.south) {(2) \texttt{extension\_type\_2}};
    \node (type3) at (0,-3.5)
    {%
      \begin{tikzpicture}[x={(1.5,0)},y={(0,1.5)}]
        \useasboundingbox (-1.5,0) rectangle (1.5,1);
        \coordinate (o1) at (-0.5,0);
        \coordinate (a) at ($(o1)+(-1,0)$);
        \coordinate (c1) at ($(o1)+(0,1)$);
        \coordinate (o2) at (0.5,0);
        \coordinate (b) at ($(o2)+(1,0)$);
        \coordinate (c2) at ($(o2)+(0,1)$);
        \draw[very thick]
        (a) -- node[below] {$x_a$} (o1)
        (o1) -- node[below] {$x_o$} (o2)
        (o2) -- node[below] {$x_b$} (b);
        \draw[very thick, blue!50!black]
        (c1) -- node[black,left] {$x_{c,1}$} (o1)
        (c2) -- node[black,right] {$x_{c,2}$} (o2);
        \fill
        (o1) circle (2.5pt) node[below] {$o_1$}
        (o2) circle (2.5pt) node[below] {$o_2$}
        (a) circle (2.5pt) node[below] {$a$}
        (b) circle (2.5pt) node[below] {$b$};
        \fill[blue!50!black]
        (c1) circle (2.5pt) node[left] {$c_1=\iota(v_i)$}
        (c2) circle(2.5pt) node[right] {$c_2=\iota(v_j)$};
      \end{tikzpicture}
    };
    \node[below,yshift=-4mm] at (type3.south) {(3) \texttt{extension\_type\_3}};
    \node (type4) at (7,-3.5)
    {%
      \begin{tikzpicture}[x={(1.5,0)},y={(0,1.5)}]
        \useasboundingbox (-1,0) rectangle (1,1);
        \coordinate (o1) at (0,0);
        \coordinate (a) at ($(o1)+(-1,0)$);
        \coordinate (b) at ($(o1)+(1,0)$);
        \coordinate (o2) at ($(o1)+(0,1)$);
        \coordinate (c1) at ($(o2)+(-1,0)$);
        \coordinate (c2) at ($(o2)+(1,0)$);
        \draw[very thick]
        (a) -- node[below] {$x_a$} (o1)
        (b) -- node[below] {$x_b$}  (o1);
        \draw[very thick, blue!50!black]
        (o1) -- node[left] {$x_o$} (o2)
        (c1) -- node[above] {$x_{c,1}$} (o2)
        (c2) -- node[above] {$x_{c,2}$} (o2);
        \fill
        (o1) circle (2.5pt) node[below] {$o_1$}
        (a) circle (2.5pt) node[below] {$a$}
        (b) circle (2.5pt) node[below] {$b$};
        \fill[blue!50!black]
        (o2) circle(2.5pt) node[above] {$o_2$}
        (c1) circle(2.5pt) node[left] {$c_1=\iota(v_i)$}
        (c2) circle(2.5pt) node[right] {$c_2=\iota(v_j)$};
      \end{tikzpicture}
    };
    \node[below,yshift=-4mm] at (type4.south) {(4) \texttt{extension\_type\_4}};
  \end{tikzpicture}\vspace{-3mm}
  \caption{The four extensions.}
  \label{fig:extensions}
\end{figure}

\begin{example}
  \label{ex:extension1}
  Let $G$ be the path graph on $6$ vertices with additional edges $\{3,5\}$ and $\{2,6\}$, let $\Gamma$ be a path graph on vertices $\{d,c,b,a\}$, and let $G\overset{\iota}{\dashrightarrow}\Gamma$ be the partial map:
  \begin{equation*}
    \iota\colon\quad G\dashrightarrow\Gamma,\quad 1\mapsto d,\;\; 2\mapsto c,\;\; 3,6\mapsto b,\;\; 5\mapsto a.
  \end{equation*}
  Consider \cref{alg:extension1} with $\{v_i,v_j\}=\{1,4\}$ and $\{a,b\}$ as defined.  Then System~\eqref{eq:systemForType1} equals:
  \begin{equation*}
    \left\{
      \begin{array}{l}
          x_a,x_b,x_c \geq 0, \\
          x_a + x_b = 1 \ (\text{Condition given by the length of the edge } \{\iota(3),\iota(5)\}),\\
          2 + x_a + x_c = 3 \ (\text{Condition to realize the distance between } 4 \text{ and } 1),\\
          x_a + x_c \leq 1, (\text{Condition given by distance between  } 4 \text{ and } 3),\\
          1 + x_a + x_c \leq 1 \ (\text{Condition given by distance between } 1 \text{ and } 2), \\
          x_a + x_c \leq 2 \ (\text{Condition given by distance between } 4\text{ and } 6), \text{ and }\\
          x_b + x_c \leq 1\ (\text{Condition given by distance between } 4 \text{ and } 2).
        \end{array}
    \right.
  \end{equation*}
  Solutions of this system are of the form $(x_a,x_b,x_c) = (t,1-t,1-t)$ for $t\in [0.5,1]$, see \cref{fig: house with garden first map}.  They all realize distance $d_G(1,4)$.  However, the solution $(x_a,x_b,x_c) = (0.5,0.5,0.5)$ also realizes distances $d_G(4,2), d_G(4,3), d_G(4,5)$, and it is the preferred solution in our implementation, see \cref{rem:extensionChoices} (1).
\end{example}

\begin{figure}[t]
  \centering
  \begin{tikzpicture}
    \node (house) at (-2,0)
    {
      \begin{tikzpicture}[x={(1.7,0)}, y={(0,1.7)}]
        \useasboundingbox (0,0) rectangle (1.5,1.75);
        \coordinate (v1) at (2,0);
        \coordinate (v2) at (1,0);
        \coordinate (v3) at (1,1);
        \coordinate (v4) at (0.5,1.5);
        \coordinate (v5) at (0,1);
        \coordinate (v6) at (0,0);
        \draw[thick]
        (v1) -- (v2) -- (v3) -- (v4) -- (v5) -- (v6)
        (v2) -- (v6)
        (v3) -- (v5);
        \fill
        (v1) circle (2pt)
        (v2) circle (2pt)
        (v3) circle (2pt)
        (v4) circle (2pt)
        (v5) circle (2pt)
        (v6) circle (2pt);
        \node[below] at (v1) {$1$};
        \node[below] at (v2) {$2$};
        \node[right] at (v3) {$3$};
        \node[above] at (v4) {$4$};
        \node[left] at (v5) {$5$};
        \node[below] at (v6) {$6$};
      \end{tikzpicture}
    };
    \node (iota) at (6,2)
    {
      \begin{tikzpicture}
        \useasboundingbox (-0.5,-2) rectangle (6,0);
        \coordinate (iota1) at (0,-1);
        \coordinate (iota2) at (2,-1);
        \coordinate (iota3) at (4,-1);
        \coordinate (iota6) at (4,-1);
        \coordinate (iota5) at (6,-1);
        \draw[very thick]
        (iota1) -- (iota2)
        (iota2) -- (iota3)
        (iota3) -- (iota5);
        \fill
        (iota1) circle(2.5pt) node[below] {$\iota(1)$}
        (iota2) circle(2.5pt) node[below] {$\iota(2)$}
        (iota3) circle(2.5pt) node[below] {$\iota(3)$} node[below,yshift=-5.5mm] {$\iota(6)$}
        (iota5) circle(2.5pt) node[below] {$\iota(5)$};
      \end{tikzpicture}
    };
    \draw[->, dashed, thick] (house) -- (iota) node[midway, above] {$\iota$};
    \node (iotaPrime) at (6,-2)
    {
      \begin{tikzpicture}
        \useasboundingbox (-0.5,-2) rectangle (6,0);
        \coordinate (iota1) at (0,-1);
        \coordinate (iota2) at (2,-1);
        \coordinate (iota3) at (4,-1);
        \coordinate (iota6) at (4,-1);
        \coordinate (iota5) at (6,-1);
        \coordinate (iota4) at (5,0);
        \coordinate (junction) at (5,-1);
        \draw[very thick]
        (iota1) -- (iota2)
        (iota2) -- (iota3)
        (iota3) --  (junction)
        (junction) -- (iota5);
        \draw[very thick, red!50!black, dashed]
        (iota4) -- (iota5);
        \draw[decorate,decoration={brace,amplitude=6pt},xshift=2mm,yshift=2mm,red!50!black]
        ($(iota4)+(0.1,0.1)$) -- ($(iota5)+(0.1,0.1)$) node[midway] (foo) {};
        \node[above,text width=22mm,align=center,font=\footnotesize,xshift=4mm,yshift=8mm,red!50!black] (fooText) at (foo) {possible positions for $\iota'(4)$};
        \draw[->] (fooText.south) to[bend left=20] ($(foo)+(0.2,0.2)$);
        \draw[very thick, blue!50!black]
        (junction) -- (iota4);
        \fill
        (iota1) circle(2.5pt) node[below] {$\iota'(1)$}
        (iota2) circle(2.5pt) node[below] {$\iota'(2)$}
        (iota3) circle(2.5pt) node[below] {$\iota'(3)$}  node[below,yshift=-5.5mm] {$\iota'(6)$}
        (iota5) circle(2.5pt) node[below] {$\iota'(5)$};
        \fill[blue!50!black]
        (junction) circle(2.5pt)
        (iota4) circle(2.5pt) node[left] {$\iota'(4)$};
      \end{tikzpicture}
    };
    \draw[->, dashed, thick] (house) -- (iotaPrime) node[midway, above] {$\iota'$};
    \draw[->, dotted] (iota) -- (iotaPrime) node[midway,left,font=\footnotesize] {\texttt{extension\_type\_1}};
  \end{tikzpicture}\vspace{-3mm}
  \caption{A $1$-lipschitz map with an extension of the first type.}
  \label{fig: house with garden first map}
\end{figure}

\begin{algorithm}[\texttt{extension\_type\_2}]\label{alg:extension2}\
  \begin{algorithmic}[1]
    \REQUIRE{$\left(G\overset{\iota}{\dashrightarrow}\Gamma, \{v_i,v_j\}\in \binom{V}{2}\right)$, where
      \begin{enumerate}
        \item $G\overset{\iota}{\dashrightarrow}\Gamma$ is a partial $1$-Lipschitz map into a metric tree $\Gamma$,
        \item $v_i\notin \domain(\iota)$,
        \item $v_j\notin \domain(\iota)$.
      \end{enumerate}}
    \ENSURE{$(p,G\overset{\iota'}{\dashrightarrow}\Gamma')$, where
      \begin{enumerate}
        \item $p\in\{\texttt{true},\texttt{false}\}$ indicates whether the extension was successful,
        \item if $p=\texttt{true}$, then $G\overset{\iota'}{\dashrightarrow}\Gamma'$ an extension realizing distance $d_G(v_i,v_j)$.
        \item if $p=\texttt{false}$, then $G\overset{\iota'}{\dashrightarrow}\Gamma'=G\overset{\iota}{\dashrightarrow}\Gamma$.
      \end{enumerate}
    }
    \FOR{$(\{a_1,b_1\},\{a_2,b_2\}) \in E(\Gamma)\times E(\Gamma)$ with $\{a_1,b_1\} \neq \{a_2,b_2\}$}
    \STATE Let $d = \min{\{d_\Gamma(a_1,a_2),d_\Gamma(b_1,b_2),d_\Gamma(b_1,a_2),d_\Gamma(a_1,b_2)\}}$.
    \STATE Relabel so that $\{a_1,b_1\}$ and $\{a_2,b_2\}$ so that 
    $d$ is $d_\Gamma(b_1,a_2)$ or $d_\Gamma(b_2,a_1)$.
    \STATE Consider the following linear system on the variables $x_{a,k},x_{b,k},x_{c,k}$, $k \in \{1,2\}$: \vspace{-1em}
    {
      \allowdisplaybreaks
      \setlength{\jot}{1pt}
      \begin{align}
        \notag & x_{a,k}\geq 0, x_{b,k}\geq 0, x_{c,k}\geq 0, x_{a,k}+x_{c,k} = d_{\Gamma}(a_k,b_k),\text{ for } k \in \{1,2\} \\
        \notag & d_\Gamma(a_2,b_1) + x_{b,1} + x_{a,2} + x_{c,1} + x_{c,2} = d_G(v_i,v_j)\text{ if } d = d_\Gamma(a_2,b_1) \neq 0\\
        \notag & d_\Gamma(b_2,a_1) + x_{b,2} + x_{a,1} + x_{c,1} + x_{c,2} = d_G(v_i,v_j) \text{ if } d = d_\Gamma(b_2,a_1) \neq 0\\
        \notag & d_\Gamma(\iota(v),a_1)) + x_{a,1} + x_{c,1} \leq d_G(v_i,v)\\
        \notag & \hspace{25mm}\text{for } v\in\domain(\iota) \text{ with } d_\Gamma(\iota(v),a_1)<d_\Gamma(\iota(v),b_1)\\
        \label{eq:systemForType2} & d_\Gamma(\iota(v),b_1)) + x_{b,1} + x_{c,1} \leq d_G(v_i,v)\\
        \notag & \hspace{25mm}\text{for } v\in\domain(\iota) \text{ with } d_\Gamma(\iota(v),a_1)>d_\Gamma(\iota(v),b_1)\\
        \notag & d_\Gamma(\iota(v),a_2)) + x_{a,2} + x_{c,2} \leq d_G(v_j,v)\\
        \notag & \hspace{25mm}\text{for } v\in\domain(\iota) \text{ with } d_\Gamma(\iota(v),a_2)<d_\Gamma(\iota(v),b_2)\\
        \notag & d_\Gamma(\iota(v),b_2)) + x_{b,2} + x_{c,2} \leq d_G(v_j,v)\\
        \notag & \hspace{25mm}\text{for } v\in\domain(\iota) \text{ with } d_\Gamma(\iota(v),a_2)>d_\Gamma(\iota(v),b_2)
      \end{align}
    }\vspace{-1em}
    \IF{System \eqref{eq:systemForType2} has a solution $(x_{a,1},x_{b,1},x_{c,1},x_{a,2},x_{b,2},x_{c,2})\in\RR^6_{\geq 0}$}
    \STATE Let $\Gamma'$ be $\Gamma$ but with but with the following added (see \cref{fig:extensions} (2)):
    \begin{enumerate}
      \item \hspace{-10mm} a vertex $o_k$ on edge $\{a_k,b_k\}$ at distance $x_{a,k}$ from $a_k$ and $x_{b,k}$ from $b_k$ for $k \in \{1,2\}$,
      \item \hspace{-10mm} a vertex $c_k$ for $k \in \{1,2\}$
      \item \hspace{-10mm} an edge $\{c_k,o_k\}$ of length $x_{c,k}$ for $k \in \{1,2\}$.
    \end{enumerate}
    \STATE Let $G\overset{\iota'}{\dashrightarrow}\Gamma'$ be $G\overset{\iota}{\dashrightarrow}\Gamma$ with $v_i$ mapped to $c_1$ and $v_j$ to $c_2$.
    \RETURN{$(\texttt{true}, G\overset{\iota'}{\dashrightarrow}\Gamma')$}
    \ENDIF
    \ENDFOR
    \RETURN{$(\texttt{false}, G\overset{\iota}{\dashrightarrow}\Gamma)$}
  \end{algorithmic}
\end{algorithm}

\begin{algorithm}[\texttt{extension\_type\_3}]\label{alg:extension3}\
  \begin{algorithmic}[1]
    \REQUIRE{$\left(G\overset{\iota}{\dashrightarrow}\Gamma, \{v_i,v_j\}\in \binom{V}{2}\right)$, where
      \begin{enumerate}
        \item $G\overset{\iota}{\dashrightarrow}\Gamma$ is a partial $1$-Lipschitz map into a metric tree $\Gamma$,
        \item $v_i\notin \domain(\iota)$,
        \item $v_j\notin \domain(\iota)$.
      \end{enumerate}}
    \ENSURE{$(p,G\overset{\iota'}{\dashrightarrow}\Gamma')$, where
      \begin{enumerate}
        \item $p\in\{\texttt{true},\texttt{false}\}$ indicates whether the extension was successful,
        \item if $p=\texttt{true}$, then $G\overset{\iota'}{\dashrightarrow}\Gamma'$ an extension realizing distance $d_G(v_i,v_j)$.
        \item if $p=\texttt{false}$, then $G\overset{\iota'}{\dashrightarrow}\Gamma'=G\overset{\iota}{\dashrightarrow}\Gamma$.
      \end{enumerate}
    }
    \FOR{$\{a,b\} \in E(\Gamma)$}
    \STATE Consider the following linear system on the variables $x_{a},x_{b},x_{o},x_{c,1},x_{c,2}$:
    {
      \allowdisplaybreaks
      \setlength{\jot}{1pt}
      \begin{align}
        \notag & x_{a}\geq 0, x_{b}\geq 0, x_{o},x_{c,1},x_{c,2}\geq 0, x_{a}+x_{b} = d_{\Gamma}(a,b) \\
        \notag & x_{c,1} + x_{c,2} = d_G(v_i,v_j) \\
        \notag & d_\Gamma(a,\iota(v)) + x_a + x_o + x_{c,1} = d_G(v,v_i) \\
        \notag & \hspace{25mm}\text{ for } v \in \domain(\iota) \text{ with } d_\Gamma(\iota(v),a) < d_\Gamma(\iota(v),b)\\
        \label{eq:systemForType3} & d_\Gamma(a,\iota(v)) + x_a + x_o + x_{c,2} = d_G(v,v_j) \\
        \notag & \hspace{25mm}\text{ for } v \in \domain(\iota)\text{ with }d_\Gamma(\iota(v),a) < d_\Gamma(\iota(v),b)\\
        \notag & d_\Gamma(b,\iota(v)) + x_b + x_o + x_{c,1} = d_G(v,v_i) \\
        \notag & \hspace{25mm}\text{ for } v \in \domain(\iota) \text{ with }d_\Gamma(\iota(v),a) > d_\Gamma(\iota(v),b)\\
        \notag & d_\Gamma(b,\iota(v)) + x_b + x_o + x_{c,2} = d_G(v,v_j) \\
        \notag & \hspace{25mm}\text{ for } v \in \domain(\iota)\text{ with }d_\Gamma(\iota(v),a) > d_\Gamma(\iota(v),b)
      \end{align}%
    }\vspace{-1em}
    \IF{System \eqref{eq:systemForType3} has a solution $(x_{a},x_{b},x_{o},x_{c,1},x_{c,2})\in\RR^5_{\geq 0}$}
    \STATE Let $\Gamma'$ be $\Gamma$ but with the following added (see \cref{fig:extensions} (3)):
    \begin{enumerate}
      \item \hspace{-10mm} a vertex $o_1$ on edge $\{a,b\}$ at distance $x_a$ from $a$ and $x_b$ from $b$,
      \item \hspace{-10mm} three vertices $o_2$ and $c_1$ and $c_2$,
      \item \hspace{-10mm} an edge $\{o_1,o_2\}$ of length $x_o$,
      \item \hspace{-10mm} an edge $\{c_1,o_2\}$ of length $x_{c,1}$, and
      \item \hspace{-10mm} an edge $\{c_2,o_2\}$ of length $x_{c,2}$.
    \end{enumerate}
    \STATE Let $G\overset{\iota'}{\dashrightarrow}\Gamma'$ be $G\overset{\iota}{\dashrightarrow}\Gamma$ with $v_i$ mapped to $c_1$ and $v_j$ to $c_2$.
    \RETURN{$(\texttt{true}, G\overset{\iota'}{\dashrightarrow}\Gamma')$}
    \ENDIF
    \ENDFOR
    \RETURN{$(\texttt{false}, G\overset{\iota}{\dashrightarrow}\Gamma)$}
  \end{algorithmic}
\end{algorithm}

And the final algorithm which describes the extension obtained by adding a $T$ shape at one of the edges.

\begin{algorithm}[\texttt{extension\_type\_4}]\label{alg:extension4}\
  \begin{algorithmic}[1]
    \REQUIRE{$\left(G\overset{\iota}{\dashrightarrow}\Gamma, \{v_i,v_j\}\in \binom{V}{2}\right)$, where
      \begin{enumerate}
        \item $G\overset{\iota}{\dashrightarrow}\Gamma$ is a partial $1$-Lipschitz map into a metric tree $\Gamma$,
        \item $v_i\notin \domain(\iota)$,
        \item $v_j\notin \domain(\iota)$.
      \end{enumerate}}
    \ENSURE{$(p,G\overset{\iota'}{\dashrightarrow}\Gamma')$, where
      \begin{enumerate}
        \item $p\in\{\texttt{true},\texttt{false}\}$ indicates whether the extension was successful,
        \item if $p=\texttt{true}$, then $G\overset{\iota'}{\dashrightarrow}\Gamma'$ an extension realizing distance $d_G(v_i,v_j)$.
        \item if $p=\texttt{false}$, then $G\overset{\iota'}{\dashrightarrow}\Gamma'=G\overset{\iota}{\dashrightarrow}\Gamma$.
      \end{enumerate}
    }
    \FOR{$\{a,b\} \in E(\Gamma)$}
    \STATE Consider the following linear system on the variables $x_{a},x_{b},x_{o},x_{c,1},x_{c,2}$:
    {
      \allowdisplaybreaks
      \setlength{\jot}{1pt}
      \begin{align}
        \notag & x_{a}\geq 0, x_{b}\geq 0, x_{o},x_{c,1},x_{c,2}\geq 0, x_{a}+x_{o}+x_{b} = d_{\Gamma}(a,b) \\
        \notag & x_{c,1} + x_{o} + x_{c,2} = d_G(v_i,v_j) \\
        \notag & d_\Gamma(a,\iota(v)) + x_a + x_{c,1} = d_G(v,v_i) \\
        \notag & \hspace{25mm}\text{ for } v \in \domain(\iota) \text{ with } d_\Gamma(\iota(v),a) < d_\Gamma(\iota(v),b)\\
        \notag & d_\Gamma(a,\iota(v)) + x_a + x_o + x_{c,2} = d_G(v,v_j) \\
        \label{eq:systemForType4} & \hspace{25mm}\text{ for } v \in \domain(\iota)\text{ with }d_\Gamma(\iota(v),a) < d_\Gamma(\iota(v),b)\\
        \notag & d_\Gamma(b,\iota(v)) + x_b + x_o + x_{c,1} = d_G(v,v_i) \\
        \notag & \hspace{25mm}\text{ for } v \in \domain(\iota) \text{ with }d_\Gamma(\iota(v),a) > d_\Gamma(\iota(v),b)\\
        \notag & d_\Gamma(b,\iota(v)) + x_b + x_{c,2} = d_G(v,v_j) \\
        \notag & \hspace{25mm}\text{ for } v \in \domain(\iota)\text{ with }d_\Gamma(\iota(v),a) > d_\Gamma(\iota(v),b)
      \end{align}%
    }\vspace{-1em}
    \IF{System \eqref{eq:systemForType4} has a solution $(x_{a},x_{b},x_{o},x_{c,1},x_{c,2})\in\RR^5_{\geq 0}$}
    \STATE Let $\Gamma'$ be $\Gamma$ but with the following added (see \cref{fig:extensions} (4)):
    \begin{enumerate}
      \item \hspace{-10mm} two vertices $o_1$ and $o_2$ at the edge $\{a,b\}$ distance $x_o$ apart, with $o_1$ at distance $x_a$ from $a$ and $o_2$ at distance $x_b$ from $b$.
      \item \hspace{-10mm} two vertices $c_1$ and $c_2$.
      \item \hspace{-10mm} two edges $\{c_1,o_1\}$ and $\{c_2,o_2\}$ of length $x_{c,1}$ and $x_{c,2}$, respectively.
    \end{enumerate}
    \STATE Let $G\overset{\iota'}{\dashrightarrow}\Gamma'$ be $G\overset{\iota}{\dashrightarrow}\Gamma$ and $v_i$ mapped to $c_1$ and $v_j$ to $c_2$.
    \RETURN{$(\texttt{true}, G\overset{\iota'}{\dashrightarrow}\Gamma')$}
    \ENDIF
    \ENDFOR
    \RETURN{$(\texttt{false}, G\overset{\iota}{\dashrightarrow}\Gamma)$}
  \end{algorithmic}
\end{algorithm}

We can now state the greedy algorithm for constructing an isometry $G\overset{\iota}{\rightarrow}\Gamma\coloneqq \Gamma_1\times\dots\times\Gamma_r$.  In addition to Algorithms \ref{alg:extension1} to \ref{alg:extension4}, the greedy algorithm further relies on the following functions:
\begin{description}
  \item[$\texttt{pick\_unrealized\_distance}(G\overset{\iota}{\rightarrow}\Gamma)$] Return a pair of vertices $\{v,w\}\in \binom{[n]}{2}$ with $d_G(v,w)>d_\Gamma(\iota(v),\iota(w))$.
  \item[$\texttt{initialize\_new\_coordinate}(G,\{v,w\})$] Return a partial map $G\overset{\iota_{r+1}}{\dashrightarrow}\Gamma_{r+1}$ with $v,w\in\mathrm{domain}(\iota_{r+1})$ and $d_G(v,w)=d_\Gamma(\iota(v),\iota(w))$.
  \item[$\texttt{extension\_final}(G\overset{\iota}{\dashrightarrow}\Gamma)$] Return a $1$-Lipschitz map $G\rightarrow\Gamma$ extending $\iota$.
\end{description}

\begin{algorithm}[\texttt{greedyAlgorithm}]\label{alg:greedyAlgorithm}\
  \begin{algorithmic}[1]
    \REQUIRE{$G$, a simple connected graph with $n$ vertices.}
    \ENSURE{$G\overset{\iota}{\rightarrow}\Gamma_1\times\dots\times\Gamma_r$, an isometry into a product of metric trees.}
    \STATE{Initialize $\iota\colon G \dashrightarrow \Gamma_1\times\dots\times\Gamma_r$ with $r\coloneqq 0$, a partial map with empty domain and codomain.  From hereon, $G\overset{\iota_k}{\dashrightarrow}\Gamma_k$ denotes the $k$-th coordinate of $\iota$.}
    \WHILE{$G\overset{\iota}{\dashrightarrow}\Gamma_1\times\dots\times\Gamma_r$ no isometry}
    \STATE{$\{v,w\} \coloneqq \texttt{pick\_unrealised\_distance}(G\overset{\iota}{\dashrightarrow}\Gamma_1\times\dots\times\Gamma_r)$}
    \FOR{$k\in\{1,\dots r\}$ and $\texttt{p}\in\{1,2,3,4\}$}
    \STATE{$(\texttt{extensionSuccessful},G\overset{\iota_k}{\dashrightarrow}\Gamma_k) \coloneqq \texttt{extension\_type\_p}(G\overset{\iota_\ell}{\dashrightarrow}\Gamma_k, \{v,w\})$}\vspace{-\baselineskip}
    \IF{$\texttt{extensionSuccessful}=\texttt{true}$}
    \STATE Go to Step 2.
    \ENDIF
    \ENDFOR
    \STATE $G\overset{\iota_{r+1}}{\dashrightarrow}\Gamma_{r+1}\coloneqq\texttt{initialize\_new\_coordinate}(G,\{v,w\})$
    \STATE $r\coloneqq r+1$
    \ENDWHILE
    \RETURN{$\texttt{extension\_final}(G\overset{\iota}{\dashrightarrow}\Gamma_1\times\dots\times\Gamma_r)$}
  \end{algorithmic}
\end{algorithm}

We close this section with a remark on our implementation and a proof that, if \cref{alg:greedyAlgorithm} returns an embedding into a product of one or two trees, then said embedding is minimal.

\begin{remark}\label{rem:extensionChoices}
  Note that all algorithms have significant room for optimization, and the most sensible target for optimization is to minimize the number of \textbf{while} loop iterations in \cref{alg:greedyAlgorithm}.  To achieve that, we made the following design decisions in our implementation \cite{ACGJQR2026github}:
  \begin{enumerate}
    \item Algorithms \ref{alg:extension1}, \ref{alg:extension2}, \ref{alg:extension3}, and \ref{alg:extension4} require solving a linear system whose solution may not be unique.  We pick the solution that will lead to $G\overset{\iota_k}{\dashrightarrow} \Gamma_k$ realizing the maximal number of previously unrealized distances in the sense of \cref{def:greedyExtension}.  These solutions are found among the vertices of the feasible region.
    \item In \cref{alg:greedyAlgorithm} Step 3, there usually are many possible pairs of vertices.  We pick a pair $\{v,w\}\in \binom{[n]}{2}$ such that $d_G(v,w)$ is maximal among the unrealized distances.  This is because any $G\overset{\iota_k}{\dashrightarrow} \Gamma_k$ realizing $d_G(v,w)$ will also realize $d_G(v',w')$ for $v',w'$ on any distance realizing path connecting $v$ and $w$, see \cref{lem:geodesicsIsometry}.
  \end{enumerate}
\end{remark}

\begin{theorem}\label{thm: greedy alg correct for rank 1/2}
  Let $G\overset\iota\hookrightarrow \Gamma_1\times\dots\times\Gamma_r$ be the output of \cref{alg:greedyAlgorithm}.  Then we have
  \begin{equation*}
    r=1\qquad\Longleftrightarrow\qquad \phylogeneticrank(G)=1.\phantom{.}
  \end{equation*}
  In particular, if $r=2$ then $\phylogeneticrank(G)=2$.
\end{theorem}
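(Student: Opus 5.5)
The equivalence rests on one invariant for the first coordinate: \emph{whenever $G$ has phylogenetic rank one, the partial map $G\overset{\iota_1}{\dashrightarrow}\Gamma_1$ is at every stage an isometry onto its image, and $\Gamma_1$ is the union of geodesics between image points.}

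The direction ``$r=1\Rightarrow\phylogeneticrank(G)=1$'' is immediate. The output of \cref{alg:greedyAlgorithm} is an isometric embedding, so $\phylogeneticrank(G)\le r$. A graph with at least two vertices cannot embed into an empty product, so $\phylogeneticrank(G)\ge 1$; the one-vertex graph is a degenerate case to dispose of separately. The final sentence then follows formally: if $r=2$, then $\phylogeneticrank(G)\le 2$, and $\phylogeneticrank(G)\neq 1$ by the equivalence, so $\phylogeneticrank(G)=2$. The real content is the converse: if $\phylogeneticrank(G)=1$, the algorithm never calls \texttt{initialize\_new\_coordinate} a second time.

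So assume $\phylogeneticrank(G)=1$. Then $d_G$ is a tree metric (by \cref{thm: phylogenetic rank one} or directly), and hence so is its restriction to every vertex subset. After the first call of \texttt{initialize\_new\_coordinate}, which we take to be a path of length $d_G(v,w)$ with endpoints $v,w$, the invariant holds. Assume it holds before some iteration. Since distances inside $\domain(\iota_1)$ are already realized, the unrealized pair $\{v_i,v_j\}$ returned by \texttt{pick\_unrealised\_distance} has at least one vertex outside the domain.

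In the case $v_j\in\domain(\iota_1)$ and $v_i\notin\domain(\iota_1)$, let $W=\domain(\iota_1)$. The metric on $W\cup\{v_i\}$ is a tree metric, so its tree realization is obtained from the realization of $W$, which by the invariant is $\Gamma_1$ itself, by attaching a pendant edge of length $x_c\ge 0$ at a point $o$ of $\Gamma_1$. This point $o$ lies on some edge $\{a,b\}$, which gives $x_a,x_b$. I expect this to follow from the four point condition via the standard ``Gromov product'' computation for the attachment point, though I have not checked the details. This $(x_a,x_b,x_c)$ satisfies System~\eqref{eq:systemForType1} with every inequality tight, so \texttt{extension\_type\_1} succeeds on $\Gamma_1$.

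In the case where neither vertex lies in the domain, the realization of $W\cup\{v_i,v_j\}$ attaches to $\Gamma_1$ in one of the shapes of \cref{fig:extensions}~(2)--(4). These are: two pendant edges at distinct points; a single subtree attached at one point of $\Gamma_1$ and branching into two leaves; or two pendant edges on the same edge of $\Gamma_1$. Again I expect the corresponding system to be feasible with all constraints tight. One subtlety is whether the attachment points fall on distinct edges or coincide; I would handle coincident points by degenerate solutions (zero lengths) and by noting that attachment at a vertex of $\Gamma_1$ can be encoded on either incident edge.

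It remains to check that the solution actually chosen preserves the invariant. By \cref{rem:extensionChoices}~(1), the implementation picks a solution realizing the maximal number of previously unrealized distances. The isometric solution exhibited above realizes every distance between the new vertices and the domain, so the maximizer realizes all of them too, and is therefore itself an isometric extension. Only realized images become leaves, so $\Gamma_1$ stays spanned by the image. By induction the while loop terminates with $r=1$, and \texttt{extension\_final} only acts once all of $V$ is in the domain.

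The main obstacle is the two-new-vertices case: showing rigorously that the tree-metric extension always matches one of the configurations of Algorithms~\ref{alg:extension2}--\ref{alg:extension4}. I would first try to avoid it by arguing that, because the algorithm picks a maximal unrealized distance, one can always reach this situation via type~$1$ steps or reduce it to them. Failing that, I would do a case analysis on the relative position of the two attachment points in $\Gamma_1$.
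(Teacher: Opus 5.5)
Your route is the paper's. It also uses Buneman's uniqueness of the minimal tree realization and an induction showing that $\iota_1$ stays the restriction of that realization, with the greedy rule of \cref{rem:extensionChoices}~(1) selecting the isometric extension. The same uniqueness settles your unchecked type~$1$ claim: inside the realization of $W\cup\{v_i\}$, the subtree spanned by $W$ is a minimal realization of $W$, hence is $\Gamma_1$, and what remains is one hanging segment.

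The paper's proof only discusses placing a single new vertex, so you are right that pairs with both endpoints outside $\domain(\iota_1)$ need an argument. However, you leave this case open, and your proposed detour via the pick rule does not work. In the star $K_{1,4}$, after initializing on two leaves, the two remaining leaves form a maximal unrealized pair. The case analysis you fall back on is routine. Let $T$ realize $W\cup\{v_i,v_j\}$, identify $\Gamma_1$ with the span of $W$ in $T$, and let $q_i,q_j$ be the nearest points of $\Gamma_1$ to the images of $v_i,v_j$.
\begin{itemize}
\item If $q_i=q_j$, the two hanging geodesics share a segment from $q_i$ to a branch point. This is \texttt{extension\_type\_3}, allowing $x_o=0$ or one $x_{c,k}=0$.
\item If $q_i\neq q_j$ lie on a common edge of $\Gamma_1$, it is \texttt{extension\_type\_4}.
\item Otherwise it is \texttt{extension\_type\_2}, choosing incident edges suitably when $q_i$ or $q_j$ is a vertex.
\end{itemize}
Systems~\eqref{eq:systemForType3} and~\eqref{eq:systemForType4} impose equalities for every $v\in\domain(\iota)$, so all their solutions are isometric. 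Only types~$1$ and~$2$ need the greedy argument.

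That greedy step has a gap which the paper's proof shares. ``The maximizer realizes all of them too'' only follows if the maximization ranges over all edges, edge pairs and extension types. As written, Algorithms~\ref{alg:extension1}--\ref{alg:extension4} return at the first edge (pair) with a feasible system, \cref{alg:greedyAlgorithm} stops at the first successful type, and \cref{rem:extensionChoices}~(1) only chooses among solutions of a single system. Read literally, the invariant can fail, as the following example shows.
\begin{itemize}
\item Take $G$ with edges $uy,yz,zw,yv,zv$. This is a $1$-sum of cliques, so $\prank(G)=1$.
\item Initializing with $\{u,w\}$ (distance $3$) and then picking $\{u,z\}$ forces $\iota_1(z)$ at distance $2$ from $\iota_1(u)$.
\item Now $\{u,v\}$ is a maximal unrealized pair. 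Suppose the edge with $a=\iota_1(z)$, $b=\iota_1(w)$ is visited first. Then System~\eqref{eq:systemForType1}, with $x_a+x_b=d_\Gamma(a,b)$ as in \cref{ex:extension1}, is feasible there with unique solution $x_a=x_c=0$.
\item This sets $\iota_1(v)=\iota_1(z)$, which permanently loses $d_G(v,z)=1$ and $d_G(v,w)=2$, so $r\geq 2$.
\end{itemize}
You therefore need to state explicitly that the chosen extension is the best one over all candidate edges, edge pairs and types. The paper tacitly assumes this when it says the isometric extension is ``precisely the one given by the algorithm''. Under that reading your argument goes through.
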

\begin{proof}
  The $\Rightarrow$ implication of the first equivalence is clear. For the $\Leftarrow$ implication, suppose that $\phylogeneticrank(G) = 1$ and let $p\colon G \hookrightarrow \Gamma$ be an embedding where $\Gamma$ is as small as possible. By \cite{buneman1974note}, the tree $\Gamma$ is unique. In particular, the restriction $p|_{V'}$ to any subset of vertices $V' \subseteq V(G)$ is the unique such tree embedding. By induction on $|V'|$, it is straightforward to verify that Algorithm~\ref{alg:greedyAlgorithm}, at each step, recovers the restriction of $p$ to a subset of vertices. Explicitly, assume, during the algorithm, we have embedded $V'$ isometrically into a tree $\iota\colon V' \dashrightarrow \Gamma'$. Recall from Remark~\ref{rem:extensionChoices} that the algorithm chooses the position of a new vertex $v \in V(G) \setminus V'$ in a greedy way to maximise the number of distances in $G$ attained in the partial map. By the uniqueness of the embedding $p|_{V' \cup \{v\}}$, there exists a unique extension of $\iota$ that achieves all distances between $v$ and $w \in V'$. This extension is precisely the one given by the algorithm. So we have shown that algorithm recovers the embedding $p\colon G \rightarrow \Gamma$, so we have $r = 1$.

  In particular, if $r = 2$ then we immediately have $\phylogeneticrank(G) \le 2$. By the first part, if $\phylogeneticrank(G) = 1$, then $r = 1$, so it follows that $\phylogeneticrank(G) = 2$. This concludes the proof.
\end{proof}

%%% Local Variables:
%%% mode: LaTeX
%%% TeX-master: "graph_embedding_paper"
%%% End:

\section{An exact algorithm}\label{sec:exactAlgorithm}

In this section, we present an exact algorithm for computing all minimal embeddings.  It was used to compute the phylogenetic ranks of most graphs on 6 and 7 vertices, as well as selected counterexamples.

\subsection{The overall algorithm}

We begin by defining what we mean by a geodesic, and how $1$-Lipschitz maps being isometries on them depends only on their endpoints.

\begin{definition}
  \label{def:geodesic}
  A \emph{geodesic} is a finite sequence of vertices $\gamma=(v_0,\dots,v_k)\in V^{k+1}$ such that $\{v_{i-1},v_{i}\}\in E$ for $i=1,\dots,k$ and $d(v_0,v_k)=k$.
  We say that the vertices $v_0,\dots,v_k$ \emph{lie on} the geodesic $\gamma$ and that $\gamma$ \emph{contains} $v_0,\dots,v_k$.  We say that $v_0, v_k$ are the \emph{endpoints} of $\gamma$ and that $\gamma$ \emph{connects} $v_0, v_k$.

  A geodesic $\gamma\in V^{k+1}$ is \emph{maximal}, if there is no longer geodesic $\gamma'\in V^{k'+1}$, $k'>k$, such that $\gamma$ is a contiguous subsequence of $\gamma'$.  We use $\mathcal G$ to denote the set of geodesics of $G$, and $\mathcal G_{\max}$ to denote the set of maximal geodesics of $G$.
\end{definition}

\begin{lemma}
  \label{lem:geodesicsAlternativeDefinition}
  Let $\gamma=(v_0,\dots,v_k)\in \mathcal G$ be a geodesic. Then $d_G(v_{l_1},v_{l_2})=l_2-l_1$ for all $0\leq l_1< l_2\leq k$.
\end{lemma}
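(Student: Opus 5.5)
The plan is to sandwich $d_G(v_{l_1},v_{l_2})$ between two bounds, each equal to $l_2-l_1$: an upper bound from the walk along $\gamma$, and a lower bound from the triangle inequality applied to the whole geodesic. Fix $0\leq l_1<l_2\leq k$.

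For the upper bound, consider the contiguous subsequence $(v_{l_1},v_{l_1+1},\dots,v_{l_2})$. Consecutive entries are adjacent, so it is a walk of length $l_2-l_1$ from $v_{l_1}$ to $v_{l_2}$. By the definition of $d_G$ in \cref{con:main} as a minimum over such walks, $d_G(v_{l_1},v_{l_2})\leq l_2-l_1$. In exactly the same way, the initial segment gives $d_G(v_0,v_{l_1})\leq l_1$ and the final segment gives $d_G(v_{l_2},v_k)\leq k-l_2$.

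For the lower bound, apply the triangle inequality for the metric $d_G$ along $v_0,v_{l_1},v_{l_2},v_k$:
\[
k=d_G(v_0,v_k)\leq d_G(v_0,v_{l_1})+d_G(v_{l_1},v_{l_2})+d_G(v_{l_2},v_k)\leq l_1+d_G(v_{l_1},v_{l_2})+(k-l_2).
\]
Rearranging gives $d_G(v_{l_1},v_{l_2})\geq l_2-l_1$. Combined with the upper bound, this yields $d_G(v_{l_1},v_{l_2})=l_2-l_1$.

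There is no genuine obstacle here. The only point needing a moment's care is the lower bound: one must use the geodesic hypothesis $d(v_0,v_k)=k$ together with the upper bounds on the two outer segments, rather than trying to argue about the inner segment alone.
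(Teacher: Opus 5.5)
Your proposal is correct and takes essentially the same approach as the paper, which simply says the claim follows from $d_G(v_0,v_k)=k$, $d_G(v_{i-1},v_i)=1$, and the triangle inequality. You spell that argument out: the segment of $\gamma$ gives the upper bound, and the triangle inequality through $v_{l_1},v_{l_2}$ together with $d_G(v_0,v_k)=k$ gives the matching lower bound.
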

\begin{proof}
  Follows from a combination of $d_G(v_0,v_k)=k$, $d_G(v_{i-1},v_i)=1$ for $i=1,\dots,k$, and the triangle inequalities.
\end{proof}

\begin{lemma}
  \label{lem:geodesicsIsometry}
Let $\iota\colon G\rightarrow\Gamma$ be a $1$-Lipschitz map.  Then for any geodesic $\gamma=(v_0,\dots,v_k)\in \mathcal G$ we have
  \begin{equation*}
    \iota|_\gamma\text{ is an isometry}\quad\Longleftrightarrow\quad d_G(v_0,v_n) = d_{\Gamma}(\iota(v_0),\iota(v_n)).
  \end{equation*}
  In particular, if $\iota_1\times\dots\times\iota_r\colon G\rightarrow\Gamma_1\times\dots\times\Gamma_r$ is an isometric embedding.  Then for any  geodesic $\gamma$, there is some $\Gamma_i$ such that $\iota_i|_{\gamma}$ is an isometry.
\end{lemma}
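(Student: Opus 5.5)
The forward implication ($\Rightarrow$) is immediate. Since $\gamma$ contains $v_0$ and $v_k$, if $\iota|_\gamma$ is an isometry then in particular $d_\Gamma(\iota(v_0),\iota(v_k))=d_G(v_0,v_k)$. (The statement writes $v_n$; it means the last vertex $v_k$.)

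For the converse, assume $d_\Gamma(\iota(v_0),\iota(v_k))=k$ and fix $0\le l_1<l_2\le k$. By \cref{lem:geodesicsAlternativeDefinition}, $d_G(v_a,v_b)=|b-a|$ for all indices on $\gamma$. The triangle inequality in $\Gamma$ together with the $1$-Lipschitz property gives
\[
k = d_\Gamma(\iota(v_0),\iota(v_k)) \le d_\Gamma(\iota(v_0),\iota(v_{l_1})) + d_\Gamma(\iota(v_{l_1}),\iota(v_{l_2})) + d_\Gamma(\iota(v_{l_2}),\iota(v_k)) \le l_1 + (l_2-l_1) + (k-l_2) = k .
\]
So equality holds throughout. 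Each of the three terms is bounded above by its counterpart in $G$, so each must equal it. In particular $d_\Gamma(\iota(v_{l_1}),\iota(v_{l_2}))=l_2-l_1=d_G(v_{l_1},v_{l_2})$, which means $\iota|_\gamma$ is an isometry. The only point to watch is exactly this termwise squeezing: equality of a sum of termwise-dominated quantities forces equality of each term.

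For the \emph{in particular} statement, let $\iota_1\times\dots\times\iota_r$ be an isometric embedding and let $\gamma=(v_0,\dots,v_k)$ be a geodesic. Since the product carries the supremum metric and the index set is finite, the maximum defining $d(\iota(v_0),\iota(v_k))=k$ is attained by some coordinate $i$, so $d_{\Gamma_i}(\iota_i(v_0),\iota_i(v_k))=k$. Each $\iota_i$ is $1$-Lipschitz, because the projection from the supremum-metric product to a factor is $1$-Lipschitz and $\iota$ is an isometry. Applying the equivalence just proved to $\iota_i$ shows that $\iota_i|_\gamma$ is an isometry.
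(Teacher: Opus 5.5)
Your proof is correct and follows essentially the paper's approach. The paper squeezes $d_G(v_0,v_j)+d_G(v_j,v_k)$ between the triangle inequality and the $1$-Lipschitz bound and then reapplies the argument to sub-geodesics; you do the same squeeze with three terms in a single step, and you also make explicit that each coordinate map $\iota_i$ is $1$-Lipschitz, a point the paper leaves tacit.
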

\begin{proof}
  The forwards direction is trivial, if $\iota|_\gamma$ is an isometry, then it must preserve the full distance by definition. For the reverse direction, we pick a vertex $v_j$ in the geodesic. Then
  \begin{align*}
    &d_G(v_0,v_j)+d_G(v_j,v_n)\overset{(1)}{=}d_G(v_0,v_n) \overset{(2)}{=} d_{\Gamma}(\iota(v_0),\iota(v_n))\\
    &\qquad \overset{(3)}{\leq} d_{\Gamma}(\iota(v_0),\iota(v_j))+d_{\Gamma}(\iota(v_j),\iota(v_n)) \overset{(4)}{\leq} d_G(v_0,v_j) + d_G(v_j,v_n),
  \end{align*}
  where Equality (1) is because of \cref{lem:geodesicsAlternativeDefinition}, Equality (2) is the given equality, Inequality (3) is the triangle inequality, and Inequality (4) follows from $\iota$ being $1$-Lipschitz.

  Since the equation above begins and ends with $d_G(v_0,v_j) + d_G(v_j,v_n)$, all inequalities must be equalities. In particular, Equality (4) together with the $1$-Lipschitz property implies that
  $$d_G (v_0,v_j)= d_{\Gamma}(\iota(v_0),\iota(v_j)), \quad d_G (v_j,v_n)= d_{\Gamma}(\iota(v_j),\iota(v_n)). $$

  This argument shows that $\iota$ is an isometry on sub-paths which have an endpoint at one of the endpoints of the original geodesic. Since we now know that $\iota$ is an isometry on sub-paths of this type, we can use the same argument again in order to conclude that the distance is preserved for any sub-path within the geodesic - which means $\iota|_\gamma$ is an isometry.

  To prove the following statement: pick a geodesic $\gamma$, note that if the map is an isometry, there must be some $\iota_i$ which realises the full distance between its endpoints. By the above argument, $\iota_i|_\gamma$ must be an isometry.
\end{proof}

Next, we introduce laps, and show that they are closely related to maximal geodesics.

\begin{definition}
  \label{def:locallyAntipodalPair}
  A \emph{locally antipodal pair} or \emph{lap} is a pair of vertices $\{a,b\}\in\binom{V}{2}$ such that
    \begin{align*}
      & d_G(a,b)\geq d_G(a',b) \qquad\text{for all } a'\in \mathrm{Neighbours}(a), \quad \text{and}\\
      & d_G(a,b)\geq d_G(a,b') \qquad\text{for all } b'\in \mathrm{Neighbours}(b),
    \end{align*}
    where $\mathrm{Neighbours}(\cdot)$ denotes all adjacent vertices.  In words, $d(a,b)$ cannot be increased by replacing either $a$ or $b$ with one of its neighbours.  We denote the set of laps by $\mathcal A\subseteq\binom{V}{2}$.  We say $v\in V$ \emph{lies on} $\{a,b\}\in\mathcal A$ or $\{a,b\}$ \emph{contains} $v$, if $v$ lies on a geodesic connecting $a$ and $b$.
\end{definition}

\begin{lemma}
  \label{lem:lapsAndMaximalGeodesics}
  For any two vertices $a,b\in V$ we have:
  \begin{align*}
    &\{a,b\}\in \mathcal A\text{ is a lap}\quad\Longleftrightarrow\quad a,b \text{ are endpoints of a maximal geodesic}.
  \end{align*}
\end{lemma}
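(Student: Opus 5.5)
We prove both implications directly from the definitions, using \cref{lem:geodesicsAlternativeDefinition} to pass between geodesics and distance statements.

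($\Leftarrow$) Suppose $a,b$ are the endpoints of a maximal geodesic $\gamma=(a=v_0,\dots,v_k=b)$, so $d_G(a,b)=k$. We argue by contradiction. Suppose some neighbour $a'$ of $a$ satisfies $d_G(a',b)>d_G(a,b)$. Since $d_G(a',b)\le d_G(a,b)+1$ by the triangle inequality, we get $d_G(a',b)=k+1$. Then $(a',v_0,\dots,v_k)$ is a walk of length $k+1$ whose consecutive vertices are adjacent and whose endpoints are at distance $k+1$, so it is a geodesic. It strictly contains $\gamma$ as a contiguous subsequence, which contradicts maximality. The same argument rules out a neighbour $b'$ of $b$ with $d_G(a,b')>d_G(a,b)$. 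Hence $\{a,b\}\in\mathcal A$.

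($\Rightarrow$) Suppose $\{a,b\}$ is a lap and $k=d_G(a,b)$. Choose any shortest path $\gamma=(a=v_0,\dots,v_k=b)$; it is a geodesic. We claim $\gamma$ is maximal. Suppose not. Then some geodesic $\gamma'$ of length $k'>k$ contains $\gamma$ as a contiguous subsequence, so $\gamma'$ extends $\gamma$ on at least one side. Say $\gamma'$ contains $a'$ immediately before $a$, where $a'$ is a neighbour of $a$. By \cref{lem:geodesicsAlternativeDefinition} applied to $\gamma'$, $d_G(a',b)$ equals the difference of positions of $a'$ and $b$ in $\gamma'$, which is $k+1>d_G(a,b)$. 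This contradicts the lap condition at $a$. An extension on the $b$ side is handled symmetrically using the lap condition at $b$.

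The only subtle point is making sure the contiguous-subsequence notion of maximality matches extension at the endpoints: any proper contiguous supersequence must add a vertex immediately adjacent to $a$ or to $b$. Also, in ($\Leftarrow$) the prepended walk is a geodesic purely by the endpoint-distance definition (\cref{def:geodesic}), so no further check is needed. Beyond this the argument is routine.
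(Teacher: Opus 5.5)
Your proposal is correct and follows essentially the same route as the paper. Both directions go by contradiction. For maximal $\Rightarrow$ lap, the triangle inequality forces $d_G(a',b)=k+1$, so $a'$ can be prepended to the geodesic, contradicting maximality. For lap $\Rightarrow$ maximal, an extending geodesic yields a neighbour $a'$ with $d_G(a',b)=k+1$, contradicting the lap condition. Your use of \cref{lem:geodesicsAlternativeDefinition} to justify that last equality makes explicit a step the paper's proof only asserts.
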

\begin{proof}
  Let $\gamma=(a,v_1,\dots,v_{k-1},b)\in V^{k+1}$ be a geodesic connecting $a$ and $b$.  We now show that $\{a,b\}\in\mathcal A$ is a lap if and only if $\gamma$ is maximal..

  For the ``$\Rightarrow$'' implication, suppose that $\gamma$ is not maximal.  Then there must be some $a'\in\mathrm{Neighbours}(a)$ with $d_G(a',b)=d_G(a,b)+1$ or some $b'\in\mathrm{Neighbours}(b)$ with $d_G(a,b')=d_G(a,b)+1$.  In both cases $\{a,b\}$ is no lap.

  For the ``$\Leftarrow$'' implication, suppose $\{a,b\}$ is no lap.  Then without loss of generality there is some $a'\in\mathrm{Neighbours}(a)$ with $d_G(a',b)>d_G(a,b)$.  By the triangle inequality, we have $d(a',b)\leq d(a',a)+d(a,b)=d(a,b)+1=k+1$.  Combining both, we see that $d(a',b)=k+1$ and that $\gamma$ can be extended to $(a',a,v_1,\dots,v_{k-1},b)$.
\end{proof}

\begin{corollary}
  \label{cor:lapsAndMaximalGeodesics}
  For any two vertices $u,v\in V$ there is a lap $\{a,b\}\in\mathcal A$ containing them.
\end{corollary}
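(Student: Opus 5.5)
The plan is to extend a shortest path between $u$ and $v$ to a maximal geodesic and then invoke \cref{lem:lapsAndMaximalGeodesics}. First I would dispose of a degenerate case: if $u=v$, replace it by any pair $\{u,w\}$ with $w$ a neighbour of $u$ (or any other vertex, since $G$ is connected); this needs $|V|\ge 2$, which is implicit because laps are pairs in $\binom{V}{2}$. So assume $u\neq v$ and fix a shortest path $\gamma_0=(u=v_0,\dots,v_k=v)$ with $k=d_G(u,v)$. This is a geodesic in the sense of \cref{def:geodesic}.

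Next I would extend $\gamma_0$ to a maximal geodesic. Every geodesic $(w_0,\dots,w_\ell)$ satisfies $\ell=d_G(w_0,w_\ell)\le\operatorname{diam}(G)<\infty$, since $G$ is finite and connected. Therefore a chain of strict extensions $\gamma_0\subsetneq\gamma_1\subsetneq\cdots$, each a geodesic containing the previous one as a contiguous subsequence, must terminate. Concretely, among all geodesics that contain $\gamma_0$ as a contiguous subsequence, I choose one of maximal length, say $\gamma=(a,\dots,u,\dots,v,\dots,b)$. It is maximal: any longer geodesic containing $\gamma$ contiguously would also contain $\gamma_0$ contiguously, which contradicts the choice of $\gamma$. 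Since $\gamma$ contains at least the two distinct vertices $u$ and $v$, we have $a\neq b$.

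By \cref{lem:lapsAndMaximalGeodesics}, the pair $\{a,b\}$ is a lap. By construction, $u$ and $v$ lie on the geodesic $\gamma$ connecting $a$ and $b$, so $\{a,b\}$ contains both in the sense of \cref{def:locallyAntipodalPair}.

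There is no real obstacle here. The only point needing care is the termination of the extension process, and finiteness of the diameter settles it. A minor subtlety is the case where $u$ or $v$ is itself an endpoint of the maximal geodesic, for instance $u=a$; this is still fine, since endpoints lie on the geodesic by definition.
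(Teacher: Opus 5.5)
Your proof is correct and follows the same route the paper intends: the corollary is stated without proof as an immediate consequence of \cref{lem:lapsAndMaximalGeodesics}, namely extend a geodesic from $u$ to $v$ to a maximal one, and choosing a maximal-length geodesic containing $\gamma_0$ (lengths being bounded by the diameter) makes that extension step rigorous. Your construction also places $u$ and $v$ on a \emph{common} geodesic between $a$ and $b$, which is exactly the form needed later in the proof of \cref{lem:minimalCoveringsAndMinimalEmbeddings}.
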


\begin{example}
  \label{ex:laps}\
  \begin{enumerate}
    \item Let $G$ be the prism graph from \cref{example: prism}.  It has $6$ laps, which are illustrated in \cref{fig:lapsPrism}: $\{1,5\}$, $\{2,4\}$, $\{1,6\}$, $\{2,6\}$, $\{3,4\}$ and $\{3,5\}$.
    \item Let $G$ be the path graph on $6$ vertices with additional edges $\{3,5\}$ and $\{2,6\}$.  It has $4$ laps which are illustated in \cref{fig:lapsHouseAndGarden}: $\{1,4\}$, $\{1,5\}$, $\{3,6\}$ and $\{4,6\}$.
  \end{enumerate}
\end{example}

\begin{figure}
  \centering
  \begin{tikzpicture}
    \node (prism) at (0,-2.5)
    {%
      \begin{tikzpicture}
        \coordinate (v1) at (1,1.0);
        \coordinate (v2) at (1,-1,0);
        \coordinate (v3) at (2.2,0);
        \coordinate (v4) at (4.5,1.0);
        \coordinate (v5) at (4.5,-1.0);
        \coordinate (v6) at (3.3,0);
        \fill
        (v1) circle (2pt)
        (v2) circle (2pt)
        (v3) circle (2pt)
        (v4) circle (2pt)
        (v5) circle (2pt)
        (v6) circle (2pt);
        \draw[thick]
        (v1) -- (v2) -- (v3) -- (v1)
        (v4) -- (v5) -- (v6) -- (v4)
        (v1) -- (v4)
        (v2) -- (v5)
        (v3) -- (v6);
        \node[left] at (v1) {$1$};
        \node[left] at (v2) {$2$};
        \node[left,xshift=-1mm] at (v3) {$3$};
        \node[right] at (v4) {$4$};
        \node[right] at (v5) {$5$};
        \node[right,xshift=1mm] at (v6) {$6$};
      \end{tikzpicture}
    };
    \node (lap15) at (5,0)
    {%
      \begin{tikzpicture}% [x={(0.7,0)}, y={(0,0.7)}]
        \coordinate (v1) at (1,1.0);
        \coordinate (v2) at (1,-1,0);
        \coordinate (v3) at (2.2,0);
        \coordinate (v4) at (4.5,1.0);
        \coordinate (v5) at (4.5,-1.0);
        \coordinate (v6) at (3.3,0);
        \draw[orange,thick]
        (v1) -- (v4) -- (v5)
        (v1) -- (v2) -- (v5);
        \fill
        (v1) circle (2pt)
        (v5) circle (2pt);
        \fill[orange]
        (v2) circle (2pt)
        (v4) circle (2pt);
        \node[left] at (v1) {$1$};
        \node[left] at (v2) {$2$};
        \node[right] at (v4) {$4$};
        \node[right] at (v5) {$5$};
        \node at (2.75,0) {$\{1,5\}$};
      \end{tikzpicture}
    };
    \node (lap24) at (10,0)
    {%
      \begin{tikzpicture}% [x={(0.7,0)}, y={(0,0.7)}]
        \coordinate (v1) at (1,1.0);
        \coordinate (v2) at (1,-1,0);
        \coordinate (v3) at (2.2,0);
        \coordinate (v4) at (4.5,1.0);
        \coordinate (v5) at (4.5,-1.0);
        \coordinate (v6) at (3.3,0);
        \draw[orange,thick]
        (v1) -- (v4) -- (v5)
        (v1) -- (v2) -- (v5);
        \fill
        (v2) circle (2pt)
        (v4) circle (2pt);
        \fill[orange]
        (v1) circle (2pt)
        (v5) circle (2pt);
        \node[left] at (v1) {$1$};
        \node[left] at (v2) {$2$};
        \node[right] at (v4) {$4$};
        \node[right] at (v5) {$5$};
        \node at (2.75,0) {$\{2,4\}$};
      \end{tikzpicture}
    };
    \node (lap16) at (5,-3)
    {%
      \begin{tikzpicture}% [x={(0.7,0)}, y={(0,0.7)}]
        \coordinate (v1) at (1,1.0);
        \coordinate (v2) at (1,-1,0);
        \coordinate (v3) at (2.2,0);
        \coordinate (v4) at (4.5,1.0);
        \coordinate (v5) at (4.5,-1.0);
        \coordinate (v6) at (3.3,0);
        \draw[orange,thick]
        (v1) -- (v3) -- (v6)
        (v1) -- (v4) -- (v6);
        \fill
        (v1) circle (2pt)
        (v6) circle (2pt);
        \fill[orange]
        (v3) circle (2pt)
        (v4) circle (2pt);
        \node[left] at (v1) {$1$};
        \node[left,xshift=-2mm] at (v3) {$3$};
        \node[right] at (v4) {$4$};
        \node[right,xshift=2mm] at (v6) {$6$};
        \node at (2.75,0.5) {$\{1,6\}$};
      \end{tikzpicture}
    };
    \node (lap26) at (5,-5)
    {%
      \begin{tikzpicture}% [x={(0.7,0)}, y={(0,0.7)}]
        \coordinate (v1) at (1,1.0);
        \coordinate (v2) at (1,-1,0);
        \coordinate (v3) at (2.2,0);
        \coordinate (v4) at (4.5,1.0);
        \coordinate (v5) at (4.5,-1.0);
        \coordinate (v6) at (3.3,0);
        \draw[orange,thick]
        (v2) -- (v3) -- (v6)
        (v2) -- (v5) -- (v6);
        \fill
        (v2) circle (2pt)
        (v6) circle (2pt);
        \fill[orange]
        (v3) circle (2pt)
        (v5) circle (2pt);
        \node[left] at (v2) {$2$};
        \node[left,xshift=-2mm] at (v3) {$3$};
        \node[right] at (v5) {$5$};
        \node[right,xshift=2mm] at (v6) {$6$};
        \node at (2.75,-0.5) {$\{2,6\}$};
      \end{tikzpicture}
    };
    \node (lap34) at (10,-3)
    {%
      \begin{tikzpicture}% [x={(0.7,0)}, y={(0,0.7)}]
        \coordinate (v1) at (1,1.0);
        \coordinate (v2) at (1,-1,0);
        \coordinate (v3) at (2.2,0);
        \coordinate (v4) at (4.5,1.0);
        \coordinate (v5) at (4.5,-1.0);
        \coordinate (v6) at (3.3,0);
        \draw[orange,thick]
        (v1) -- (v3) -- (v6)
        (v1) -- (v4) -- (v6);
        \fill
        (v3) circle (2pt)
        (v4) circle (2pt);
        \fill[orange]
        (v1) circle (2pt)
        (v6) circle (2pt);
        \node[left] at (v1) {$1$};
        \node[left,xshift=-2mm] at (v3) {$3$};
        \node[right] at (v4) {$4$};
        \node[right,xshift=2mm] at (v6) {$6$};
        \node at (2.75,0.5) {$\{3,4\}$};
      \end{tikzpicture}
    };
    \node (lap35) at (10,-5)
    {%
      \begin{tikzpicture}% [x={(0.7,0)}, y={(0,0.7)}]
        \coordinate (v1) at (1,1.0);
        \coordinate (v2) at (1,-1,0);
        \coordinate (v3) at (2.2,0);
        \coordinate (v4) at (4.5,1.0);
        \coordinate (v5) at (4.5,-1.0);
        \coordinate (v6) at (3.3,0);
        \draw[orange,thick]
        (v2) -- (v3) -- (v6)
        (v2) -- (v5) -- (v6);
        \fill
        (v3) circle (2pt)
        (v5) circle (2pt);
        \fill[orange]
        (v2) circle (2pt)
        (v6) circle (2pt);
        \node[left] at (v2) {$2$};
        \node[left,xshift=-2mm] at (v3) {$3$};
        \node[right] at (v5) {$5$};
        \node[right,xshift=2mm] at (v6) {$6$};
        \node at (2.75,-0.5) {$\{3,5\}$};
      \end{tikzpicture}
    };
  \end{tikzpicture}\vspace{-3mm}
  \caption{The laps of the prism graph from \cref{ex:laps} (1).}
  \label{fig:lapsPrism}
\end{figure}

\begin{figure}[t]
  \centering
  \begin{tikzpicture}
    \node at (0,-2)
    {%
      \begin{tikzpicture}[x={(1.7,0)}, y={(0,1.7)}]
        \useasboundingbox (0,0) rectangle (2,1.75);
        \coordinate (v1) at (2,0);
        \coordinate (v2) at (1,0);
        \coordinate (v3) at (1,1);
        \coordinate (v4) at (0.5,1.5);
        \coordinate (v5) at (0,1);
        \coordinate (v6) at (0,0);
        \draw[thick]
        (v1) -- (v2) -- (v3) -- (v4) -- (v5) -- (v6)
        (v2) -- (v6)
        (v3) -- (v5);
        \fill
        (v1) circle (2pt)
        (v2) circle (2pt)
        (v3) circle (2pt)
        (v4) circle (2pt)
        (v5) circle (2pt)
        (v6) circle (2pt);
        \node[below] at (v1) {$1$};
        \node[below] at (v2) {$2$};
        \node[right] at (v3) {$3$};
        \node[above] at (v4) {$4$};
        \node[left] at (v5) {$5$};
        \node[below] at (v6) {$6$};
      \end{tikzpicture}
    };
    \node at (5,0)
    {%
      \begin{tikzpicture}[x={(1.7,0)}, y={(0,1.7)}]
        \useasboundingbox (0,0) rectangle (2,1.75);
        \coordinate (v1) at (2,0);
        \coordinate (v2) at (1,0);
        \coordinate (v3) at (1,1);
        \coordinate (v4) at (0.5,1.5);
        \coordinate (v5) at (0,1);
        \coordinate (v6) at (0,0);
        \draw[orange,thick]
        (v1) -- (v2) -- (v3) -- (v4);
        \fill
        (v1) circle (2pt)
        (v4) circle (2pt);
        \fill[orange]
        (v2) circle (2pt)
        (v3) circle (2pt);
        \node[below] at (v1) {$1$};
        \node[below] at (v2) {$2$};
        \node[right] at (v3) {$3$};
        \node[above] at (v4) {$4$};
        \node at (1.5,0.5) {$\{1,4\}$};
      \end{tikzpicture}
    };
    \node at (10,0)
    {%
      \begin{tikzpicture}[x={(1.7,0)}, y={(0,1.7)}]
        \useasboundingbox (0,0) rectangle (2,1.75);
        \coordinate (v1) at (2,0);
        \coordinate (v2) at (1,0);
        \coordinate (v3) at (1,1);
        \coordinate (v4) at (0.5,1.5);
        \coordinate (v5) at (0,1);
        \coordinate (v6) at (0,0);
        \draw[orange,thick]
        (v4) -- (v5) -- (v6);
        \fill
        (v4) circle (2pt)
        (v6) circle (2pt);
        \fill[orange]
        (v5) circle (2pt);
        \node[above] at (v4) {$4$};
        \node[left] at (v5) {$5$};
        \node[below] at (v6) {$6$};
        \node at (0.5,0.5) {$\{4,6\}$};
      \end{tikzpicture}
    };
    \node at (5,-4)
    {%
      \begin{tikzpicture}[x={(1.7,0)}, y={(0,1.7)}]
        \useasboundingbox (0,0) rectangle (2,1);
        \coordinate (v1) at (2,0);
        \coordinate (v2) at (1,0);
        \coordinate (v3) at (1,1);
        \coordinate (v4) at (0.5,1.5);
        \coordinate (v5) at (0,1);
        \coordinate (v6) at (0,0);
        \draw[orange,thick]
        (v1) -- (v2) -- (v3) -- (v5)
        (v1) -- (v2) -- (v6) -- (v5);
        \fill
        (v1) circle (2pt)
        (v5) circle (2pt);
        \fill[orange]
        (v2) circle (2pt)
        (v3) circle (2pt)
        (v6) circle (2pt);
        \node[below] at (v1) {$1$};
        \node[below] at (v2) {$2$};
        \node[right] at (v3) {$3$};
        \node[left] at (v5) {$5$};
        \node[below] at (v6) {$6$};
        \node at (0.5,0.5) {$\{1,5\}$};
      \end{tikzpicture}
    };
    \node at (10,-4)
    {%
      \begin{tikzpicture}[x={(1.7,0)}, y={(0,1.7)}]
        \useasboundingbox (0,0) rectangle (2,1);
        \coordinate (v1) at (2,0);
        \coordinate (v2) at (1,0);
        \coordinate (v3) at (1,1);
        \coordinate (v4) at (0.5,1.5);
        \coordinate (v5) at (0,1);
        \coordinate (v6) at (0,0);
        \draw[orange,thick]
        (v3) -- (v2) -- (v6)
        (v3) -- (v5) -- (v6);
        \fill
        (v3) circle (2pt)
        (v6) circle (2pt);
        \fill[orange]
        (v2) circle (2pt)
        (v5) circle (2pt);
        \node[below] at (v2) {$2$};
        \node[right] at (v3) {$3$};
        \node[left] at (v5) {$5$};
        \node[below] at (v6) {$6$};
        \node at (0.5,0.5) {$\{3,6\}$};
      \end{tikzpicture}
    };
  \end{tikzpicture}\vspace{2mm}
  \caption{The laps of the augmented path from \cref{ex:laps} (2).}
  \label{fig:lapsHouseAndGarden}
\end{figure}

Finally, we introduce claps, and show that sets thereof are closely related to isometric embeddings.

\begin{definition}
  \label{def:compatibility}
  A set of laps $C\subseteq \mathcal A$ is a \emph{compatible lap set}, or $C$ is a \emph{claps} for short, if there exists a metric tree $(\Gamma_C,d_{\Gamma_C})$ and a $1$-Lipschitz map $\iota_C\colon G\hookrightarrow\Gamma_B$ such that $d_G(a,b)=d_{\Gamma_C}(\iota_C(a),\iota_C(b))$ for all laps $\{a,b\}$ in $C$.

  A claps $C\subseteq \mathcal A$ is \textit{maximal}, if there is no claps $C'\subseteq\mathcal A$ such that $C\subsetneq C'$. We denote the set of claps by $\mathcal C$ and the set of maximal claps by $\mathcal C_{\max}$.
\end{definition}

% \ollie{Do we want to point out that the set of claps $\mathcal C$ is an abstract simplicial complex on $\mathcal A$, which we could call the \textit{claps complex}.}

\begin{lemma}
  \label{lem:minimalCoveringsAndMinimalEmbeddings}
  Let $C_1,\dots, C_r\in\mathcal C$ be claps covering $\mathcal A$, and let $\iota_i\colon G\rightarrow \Gamma_i$ be the $1$-Lipschitz maps such that for all laps $\{a,b\} \in C_i$ we have $d_{\Gamma_i}(\iota_i(a),\iota_i(b))=d_G(a,b)$. Then $\iota\coloneqq \iota_1\times\dots\times\iota_r\colon G\rightarrow\Gamma_1\times\dots\times\Gamma_r$ is an isometry.

  Moreover, if $C_1,\dots,C_r\in \mathcal C_{\max}$ are maximal claps, then we have
  \begin{align*}
    &C_1,\dots,C_r \text{ is a minimal covering of } \mathcal A \qquad \Longleftrightarrow \\
    &\qquad \iota_1\times\dots\times\iota_r \text{ is minimal isometric embedding of } G.
  \end{align*}
\end{lemma}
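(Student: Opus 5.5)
The first claim is a combination of Corollary~\ref{cor:lapsAndMaximalGeodesics}, Lemma~\ref{lem:lapsAndMaximalGeodesics} and Lemma~\ref{lem:geodesicsIsometry}. Each $\iota_i$ is $1$-Lipschitz, hence so is the product with the supremum metric. It therefore suffices to show that for any $u,v\in V$ some coordinate realises $d_G(u,v)$.

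By Corollary~\ref{cor:lapsAndMaximalGeodesics} there is a lap $\{a,b\}\in\mathcal A$ containing $u$ and $v$, i.e. a geodesic $\gamma$ from $a$ to $b$ through $u$ and $v$. Since the $C_i$ cover $\mathcal A$, we have $\{a,b\}\in C_i$ for some $i$, so $d_{\Gamma_i}(\iota_i(a),\iota_i(b))=d_G(a,b)$. Lemma~\ref{lem:geodesicsIsometry} then shows that $\iota_i|_\gamma$ is an isometry, and hence $d_{\Gamma_i}(\iota_i(u),\iota_i(v))=d_G(u,v)$. One small point needs care: ``$v$ lies on $\{a,b\}$'' only gives separate geodesics through $u$ and through $v$. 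I would therefore use the proof of Corollary~\ref{cor:lapsAndMaximalGeodesics} directly: extend the geodesic from $u$ to $v$ to a maximal geodesic, whose endpoints form a lap by Lemma~\ref{lem:lapsAndMaximalGeodesics}. This yields a single geodesic containing both vertices.

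For the equivalence, assume all $C_i$ are maximal claps. For ($\Rightarrow$), suppose $\iota$ is not minimal, so some coordinate $\ell$ can be dropped and the remaining product is still isometric. Every lap $\{a,b\}$ is then realised by some $\iota_j$ with $j\neq\ell$, so $\{a,b\}$ lies in the claps $C'_j\coloneqq\{\text{laps realised by }\iota_j\}$. This set contains $C_j$ and equals $C_j$ by maximality. Hence $\{C_j\}_{j\neq\ell}$ already covers $\mathcal A$, contradicting minimality of the covering.

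For ($\Leftarrow$), suppose the covering is not minimal, so some $C_\ell$ can be removed and the rest still cover $\mathcal A$. The first part applied to $\{C_j\}_{j\neq\ell}$ shows that the product without coordinate $\ell$ is still an isometry, so $\iota$ is not minimal.

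The main obstacle is the identification used in ($\Rightarrow$): that the set of laps realised by $\iota_j$ is exactly $C_j$. This requires maximality, and it needs the observation that the set of laps realised by any $1$-Lipschitz map to a tree is itself a claps, witnessed by the same map, which holds directly from the definition. I would also interpret ``minimal covering'' as irredundant, meaning no member can be dropped, to match the paper's notion of a minimal embedding.
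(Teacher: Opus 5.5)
Your proposal is correct and follows the paper's proof. The first part combines a lap through both vertices with \cref{lem:geodesicsIsometry}, the forward direction of the equivalence rests on the same maximality argument, and the backward direction is identical. For the forward direction you argue contrapositively, whereas the paper picks a lap in $C_1\setminus\bigcup_{i\geq 2}C_i$ and shows it would enlarge some $C_i$. Your explicit construction of a single maximal geodesic through both $u$ and $v$ is a worthwhile precision: the paper's proof invokes an unspecified geodesic $\gamma$, and the per-vertex definition of ``lies on'' does not by itself guarantee one.
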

\begin{proof}
  Let $v,w\in V(G)$.  Pick a lap $\{a,b\} \in\mathcal A$ containing $v,w$, which is possibly by \cref{cor:lapsAndMaximalGeodesics}, and suppose w.l.o.g. $\{a,b\}\in C_1$.  Since $d_{\Gamma_i}(\iota_i(a),\iota_i(b))=d_G(a,b)$, $\iota_1|_\gamma$ is an isometry by \cref{lem:geodesicsIsometry}, which in turn implies $d_{\Gamma_1}(\iota_1(v),\iota_1(w))=d_G(v,w)$. And because $\iota_2,\dots,\iota_r$ are $1$-Lipschitz, we have $d_{\Gamma_j}(\iota_j(v),\iota_j(w))\leq d_G(v,w)$ for $j>1$.  Combining both, we obtain
  \begin{align*}
    &d_{\Gamma_1\times\dots\times\Gamma_r}(\iota(v),\iota(w)) \\
    &\quad= \max(\underbrace{d_{\Gamma_1}(\iota_1(v),\iota_1(w))}_{=d_G(v,w)},\underbrace{d_{\Gamma_2}(\iota_2(v),\iota_2(w))}_{\leq d_G(v,w)},\dots,\underbrace{d_{\Gamma_r}(\iota_r(v),\iota_r(w))}_{\leq d_G(v,w)})
    =d_G(v,w),
  \end{align*}
  showing that $\iota$ is an isometry.

  From hereon, assume $C_1,\dots,C_r$ are maximal as sets of compatible laps.

  For the ``$\Rightarrow$'' implication of the equivalence, let $C_1,\dots,C_r$ be a minimal covering.  W.l.o.g., it suffices to show that $\iota_2\times\dots\times\iota_r\colon G\rightarrow\Gamma_2\times\dots\times\Gamma_r$ is not an isometry.
  Let $\{a,b\} \in C_1\setminus\bigcup_{i=2}^r C_i$. If $\iota_2\times\dots\times\iota_r$ were an isometry, then there must be some $i>1$ such that $d_{\Gamma_i}(\iota_i(a),\iota_i(b))=d_G(a,b)$.  This implies that $C_i\cup\{a,b\}$ is a clap and contradicts the maximality of $C_i$.

  For the ``$\Leftarrow$'' implication of the equivalance, assume that $C_2,\dots,C_r$ cover $\mathcal A$.  Then by the first part $\iota_2\times\dots\times\iota_r$ is already an isomorphism, contradicting the minimality of $\iota_1\times\dots\times\iota_r$.
\end{proof}

\begin{example}
  \label{ex:claps}\
  \begin{enumerate}
    \item Let $G$ be the prism graph with the $6$ laps from \cref{ex:laps} (1).  Let $\Gamma_2$ be a path with vertices $x,y,z$ and edges $\{x,y\}, \{y,z\}$ that are all length one., One can check that the set of laps $\{ \{1,5\}, \{1,6\}\}$, $\{\{2,4\},\{2,6\}\}$ and $\{\{3,4\},\{3,5\}\}$ are all compatible using the maps
    \begin{flalign*}
      {\hspace{15mm}
        \setlength{\arraycolsep}{10pt}
        \begin{array}{rrrrr}
          \iota_1\colon& G\rightarrow \Gamma_2,& 1\mapsto x,& 2,3,4\mapsto y,& 5,6\mapsto z,\\
          \iota_2\colon& G\rightarrow \Gamma_2,& 2\mapsto x,& 1,3,5\mapsto y,& 4,6\mapsto z,\\
          \iota_3\colon& G\rightarrow \Gamma_2,& 3\mapsto x,& 1,2,6\mapsto y,& 4,5\mapsto z.
        \end{array}
      }&&
    \end{flalign*}
    By \cref{lem:minimalCoveringsAndMinimalEmbeddings}, $\iota_1\times\iota_2\times\iota_3$ is an isometry.
    \item Let $G$ be the augmented path with the $4$ laps from \cref{ex:laps} (2).  As above, one can check that the set of laps $\{\{1,4\},\{1,5\}\}$ and $\{\{3,6\},\{4,6\}\}$ are compatible using the maps
    \begin{flalign*}
      {\hspace{15mm}
        \setlength{\arraycolsep}{10pt}
        \begin{array}{rrrrrr}
          \iota_1\colon& G\rightarrow \Gamma_3,& 1\mapsto x,& 2\mapsto y,& 3,6\mapsto z,& 4,5\mapsto w\\
          \iota_2\colon& G\rightarrow \Gamma_2,& 6\mapsto x,& 1,2,5\mapsto y,& 3,4\mapsto z,
        \end{array}
      }&&
    \end{flalign*}
    where $\Gamma_3$ is a path with vertices $x,y,z,w$ and $\Gamma_2$ is as above.     By \cref{lem:minimalCoveringsAndMinimalEmbeddings}, $\iota_1\times\iota_2$ is an isometry.
  \end{enumerate}
\end{example}

We now conclude this subsection with our main algorithm.

\begin{algorithm}\
  \label{alg:exactAlgorithm}
  \begin{algorithmic}[1]
    \REQUIRE{$G$, a simple connected metric graph}
    \ENSURE{all minimal embeddings}
    \STATE Construct the set of laps $\mathcal{A}$.
    \STATE Construct the set of maximal claps $\mathcal{C}_{\max}$.
    \STATE Find all minimal covers:
    \begin{equation*}
      \mathcal B\coloneqq \Big\{ \{C_1,\dots,C_r\} \subseteq \mathcal{C}_{\max} \text{ minimal} \bigmid \mathcal{A}=\textstyle\bigcup_{i=1}^r C_i \Big\}. \vspace{-1em}
    \end{equation*}
    \RETURN{$\{\iota_{C_1}\times\dots\times\iota_{C_r}\mid \{C_1,\dots,C_r\}\in \mathcal B\}$.}
  \end{algorithmic}
\end{algorithm}
\begin{proof}
  The correctness of the algorithm follows directly from \cref{lem:minimalCoveringsAndMinimalEmbeddings}.
\end{proof}

\begin{remark}
  \label{rem:exactAlgorithm}
  In \textsc{PhylogeneticRank.jl}, all three steps of \cref{alg:exactAlgorithm} are implemented as poset traversals:
  \begin{enumerate}
    \item For $\mathcal{A}$, we construct the poset of all geodesics $\mathcal G$, beginning with the edges at the bottom and ending with the maximal geodesics $\mathcal G_{\max}$ at the top.
    \item For $\mathcal{C}_{\max}$, one could construct the poset of all claps $\mathcal C$, beginning with the singleton claps at the bottom and ending with the maximal claps $\mathcal{C}_{\max}$ at the top.
    In practise however, it is much faster us to check incompatible sets of laps and large set of laps for two reasons:
    \begin{itemize}
      \item if $C$ is incompatible, it may contain a pair of incompatible laps that can be quickly identified with \cref{thm:compatiblePairs}, and
      \item the larger $C$ is, the more dimensions are eliminated in \cref{prop:compatibility} due to Condition (C-consistent).
    \end{itemize}
    Therefore, our code for computing $\mathcal{C}_{\max}$ traverses the poset of incompatible sets of laps $2^{\mathcal A}\setminus\mathcal C$, beginning with the complete set $\mathcal A$ at the bottom.
    \item For $\mathcal B$, we construct the poset of all subsets of $\mathcal C_{\max}$, beginning with singleton sets at the bottom and ending with covers at the top.
  \end{enumerate}
  Implementing all constituents of \cref{alg:exactAlgorithm} as poset traversals allows us to streamline the code.  The bottleneck of the algorithm is in Step 2.
\end{remark}

\subsection{Checks for Compatibility}

As mentioned in \cref{rem:exactAlgorithm}, the bottleneck of \cref{alg:exactAlgorithm} is checking sets of laps for compatibility in Step 2.  This is because we are using a brute-force approach using the tropical Grassmannian $\mathrm{TGr}(2,n)$ (see \cite[Section 4.3]{MaclaganSturmfels2015} and \cite[Section 10.6]{Joswig2021} for details on $\mathrm{TGr}(2,n)$):

\begin{proposition}
  \label{prop:compatibility}
  Let $C\subseteq\mathcal A$ be a set of laps.  Write $V=\{1,\dots,n\}$ and consider the following polytope $P_C\subseteq\RR^{\binom{n}{2}}$ consisting of all points $(d_{ij})_{ij\in\binom{n}{2}}$ satisfying the following linear constraints:
  \setlist[description]{font=\normalfont}
  \begin{itemize}[leftmargin=35mm,itemsep=1mm]
    \item[\normalfont (non-negative):] $d_{ij}\geq 0$ for all $i,j$,
    \item[\normalfont (1-Lipschitz):] $d_{ij}\leq d_G(i,j)$ for all $i,j$,
    \item[\normalfont (triangle-ineq):] $d_{ij}\leq d_{ik}+d_{kj}$ for all $i,j,k$,
    \item[\normalfont ($C$-consistent):] $d_{ij}= d_G(i,j)$ for all $i,j$ on some lap $\{a,b\}\in C$.
  \end{itemize}
  Then
  \begin{equation*}
    C\text{ is compatible}\qquad\iff\qquad P_C\cap\mathrm{TGr}(2,n)\neq\emptyset,
  \end{equation*}
  where $\mathrm{TGr}(2,n)\subseteq\RR^{\binom{n}{2}}$ is the tropical Grassmannian of tropical planes in $\RR^n$.
\end{proposition}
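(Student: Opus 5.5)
The plan is to identify $\mathrm{TGr}(2,n)$ with the space of tree metrics on $n$ labelled points. Following \cite{SpeyerSturmfels2004}, a point $(d_{ij})\in\RR^{\binom{n}{2}}$ lies in $\mathrm{TGr}(2,n)$ (up to the usual sign convention and the lineality space) exactly when it satisfies the four point condition of \cref{prop: four point condition}. One subtlety is that $\mathrm{TGr}(2,n)$ contains points that are not metrics: negative lengths on pendant edges, or failures of the triangle inequality. This is why the (non-negative) and (triangle-ineq) constraints are included in $P_C$. The first step is therefore to record the standard fact that a nonnegative, symmetric $(d_{ij})$ satisfying the triangle inequality is a tree pseudometric, i.e.\ realised by points in a metric tree, if and only if it lies in $\mathrm{TGr}(2,n)$. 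Points are allowed to coincide, so a pseudometric suffices. The proof will work entirely with this reformulation.

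For the direction ``$\Rightarrow$'', suppose $C$ is compatible, with witness $\iota_C\colon G\to\Gamma_C$. Set $d_{ij}\coloneqq d_{\Gamma_C}(\iota_C(i),\iota_C(j))$ and check each constraint.
\begin{itemize}
  \item Non-negativity and the triangle inequality hold because $d$ is a pullback of a metric.
  \item The 1-Lipschitz condition holds by hypothesis on $\iota_C$.
  \item For ($C$-consistent), note that $d_{ab}=d_G(a,b)$ for each lap $\{a,b\}\in C$. By \cref{lem:geodesicsIsometry}, $\iota_C$ restricted to every geodesic from $a$ to $b$ is an isometry, so $d_{ij}=d_G(i,j)$ for all $i,j$ lying on $\{a,b\}$.
\end{itemize}
Finally, $d$ is a tree pseudometric, so it satisfies the four point condition and lies in $\mathrm{TGr}(2,n)$.

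For the direction ``$\Leftarrow$'', take $d\in P_C\cap\mathrm{TGr}(2,n)$. By the first step, $d$ is a tree pseudometric. Identify points at distance zero, and apply \cref{prop: four point condition} (Buneman) to realise the resulting metric space in a metric tree $\Gamma_C$. Define $\iota_C(i)$ to be the image of $i$. The (1-Lipschitz) constraints make $\iota_C$ $1$-Lipschitz. The ($C$-consistent) constraints applied to $i=a$, $j=b$ give $d_{\Gamma_C}(\iota_C(a),\iota_C(b))=d_G(a,b)$ for every lap in $C$, so $C$ is compatible.

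The main obstacle is the first step: pinning down the precise relationship between $\mathrm{TGr}(2,n)$ and tree pseudometrics. The tropical Grassmannian is a fan with a lineality space, and it includes vectors with negative pendant lengths. I would handle this by citing \cite[Section 4.3]{MaclaganSturmfels2015}, which shows that $\mathrm{TGr}(2,n)$ equals the set of $d$ with the max attained twice in every four-term Plücker relation. I would then verify directly that, under the added non-negativity and triangle constraints, the four point condition with degenerate (coincident) points still yields a weighted tree realisation with nonnegative edge lengths. Buneman's construction, with zero-length edges contracted, should handle this.
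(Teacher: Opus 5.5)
Your proposal is correct and follows the same route as the paper. The paper's proof consists of two observations: $P_C$ is the set of $1$-Lipschitz contractions of $d_G$ that are exact along the geodesics underlying the laps in $C$, and $\mathrm{TGr}(2,n)$ is the space of tree metrics. Your treatment of the sign convention, the lineality space, the non-metric points of $\mathrm{TGr}(2,n)$, and coincident vertices (via the quotient and Buneman's theorem) fills in details the paper leaves implicit.

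One point should be stated explicitly. \cref{lem:geodesicsIsometry} gives $d_{ij}=d_G(i,j)$ only when $i,j$ lie on a \emph{common} geodesic from $a$ to $b$, so ($C$-consistent) must be read that way, as the paper's proof does. Suppose instead the constraint were imposed whenever $i$ and $j$ each merely lie on the lap, possibly on different geodesics. Take the $4$-cycle $1,2,3,4$ with $C=\{\{1,3\}\}$: then $i=2$, $j=4$ would be constrained, which makes $P_C\cap\mathrm{TGr}(2,4)$ empty. Yet the path map at $1$ shows that $C$ is compatible, so the equivalence would fail under that reading.
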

\begin{proof}
  The proof is straighforward.  The polytope $P_C$ is the set of distances that contract $d_G$ and coincide with $d_G$ on geodesics that give rise to laps in $C$.  And $\mathrm{TGr}(2,n)$ is the space of tree metrics \cite[Corollary 10.48]{Joswig2021}.
\end{proof}

To speed up our checks for compatibility, we now give a complete classification of pairs of compatible laps in the form of a relaxed 4-point condition.  In our code, it is used to quickly identify sets containing incompatible pairs, which must be incompatible themselves.  For said classification, we first require the following small lemma:

\begin{lemma}
  \label{lem:compatiblePair}
  Let $X=\{1,2,3,4\}$ and let $d\colon X\times X\rightarrow\RR_{\geq 0}$ be a metric such that
  \begin{equation}
    \label{eq:crossSumOrdering}
    d(1,4)+d(2,3)\leq d(1,3)+d(2,4)\leq d(1,2)+d(3,4).
  \end{equation}
  Then there is a metric $d'\colon X\times X\rightarrow\RR_{\geq 0}$ satisfying
  \begin{itemize}
    \item $d'(i,j)=d(i,j)$ for $\{i,j\}\neq \{1,2\}, \{3,4\}$,
    \item $d'(i,j)\leq d(i,j)$ for $\{i,j\}=\{1,2\}, \{3,4\}$,
    \item $d'(1,4)+d'(2,3)\leq d'(1,3)+d'(2,4)=d'(1,2)+d'(3,4)$.
  \end{itemize}
\end{lemma}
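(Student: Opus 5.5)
The plan is to leave every entry except $d(1,2)$ and $d(3,4)$ unchanged, and to lower those two entries until the largest cross sum drops to equal the middle one. Set
\[
\delta \coloneqq \bigl(d(1,2)+d(3,4)\bigr)-\bigl(d(1,3)+d(2,4)\bigr),
\]
which is $\geq 0$ by \eqref{eq:crossSumOrdering}. I will look for $d'(1,2)=d(1,2)-\alpha$ and $d'(3,4)=d(3,4)-\beta$ with $\alpha,\beta\ge 0$ and $\alpha+\beta=\delta$. Then the equality $d'(1,3)+d'(2,4)=d'(1,2)+d'(3,4)$ holds by construction. The inequality $d'(1,4)+d'(2,3)\le d'(1,3)+d'(2,4)$ is inherited unchanged from \eqref{eq:crossSumOrdering}, because none of these four entries is modified.

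The only thing left to check is that $d'$ is still a metric. The key observation is that the triangle inequalities decouple. The entry $d(1,2)$ appears only in the triangles $\{1,2,3\}$ and $\{1,2,4\}$. The entry $d(3,4)$ appears only in the triangles $\{1,3,4\}$ and $\{2,3,4\}$. No triangle contains both modified entries, and lowering an entry can only break the inequalities in which that entry is the shorter side. So $d'$ is a (pseudo)metric if and only if
\[
d'(1,2)\ \ge\ L_{12}\coloneqq\max\bigl(|d(1,3)-d(2,3)|,\ |d(1,4)-d(2,4)|\bigr)
\]
and
\[
d'(3,4)\ \ge\ L_{34}\coloneqq\max\bigl(|d(1,3)-d(1,4)|,\ |d(2,3)-d(2,4)|\bigr).
\]
Since $d$ is a metric, we already have $L_{12}\le d(1,2)$ and $L_{34}\le d(3,4)$. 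The set of admissible pairs $(d'(1,2),d'(3,4))$ is therefore the box $[L_{12},d(1,2)]\times[L_{34},d(3,4)]$. The sum of the two coordinates ranges continuously over the interval $[L_{12}+L_{34},\ d(1,2)+d(3,4)]$ on this box. Hence the target value $d(1,3)+d(2,4)$ is attained if and only if
\[
L_{12}+L_{34}\ \le\ d(1,3)+d(2,4).
\]

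Verifying this last inequality is the main (though elementary) obstacle. I would expand both maxima, which gives sixteen sign/choice combinations $\pm(\cdot)\pm(\cdot)$, and bound each one by $d(1,3)+d(2,4)$. Each combination should follow either from the hypothesis $d(1,4)+d(2,3)\le d(1,3)+d(2,4)$ or from a triangle inequality along a path. Two representative cases are these:
\begin{itemize}
\item $(d(1,3)-d(2,3))+(d(1,3)-d(1,4))\le d(1,3)+d(2,4)$ reduces to $d(1,3)\le d(1,4)+d(2,4)+d(2,3)$, which is the triangle inequality along the path $1\text{--}4\text{--}2\text{--}3$.
\item $(d(2,3)-d(1,3))+(d(1,4)-d(1,3))\le d(1,3)+d(2,4)$ follows from the hypothesis $d(1,4)+d(2,3)\le d(1,3)+d(2,4)$, since the extra terms on the left are negative.
\end{itemize}
Mixed cases such as $(d(1,3)-d(2,3))+(d(1,4)-d(1,3))=d(1,4)-d(2,3)$ are bounded by $d(1,4)+d(2,3)\le d(1,3)+d(2,4)$. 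The symmetry of \eqref{eq:crossSumOrdering} under the relabellings $1\leftrightarrow2,\ 3\leftrightarrow4$ and $(1,2)\leftrightarrow(3,4)$ should cut the sixteen cases down to a handful. Once the inequality is established, choose $\alpha,\beta$ accordingly and the lemma follows.
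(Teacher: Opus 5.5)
Your proposal is correct, and it takes a genuinely different route from the paper.

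The paper proves the lemma by a computer check. It writes down the polytope $P\subseteq\RR^6$ of metrics satisfying \eqref{eq:crossSumOrdering} and the polytope $Q\subseteq\RR^8$ of pairs $(d,d')$ with the required properties. It then verifies in OSCAR that $Q$ projects onto all of $P$, which is in effect a machine Fourier--Motzkin elimination of $d'_{12},d'_{34}$.

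You do that elimination by hand. What makes this tractable is your observation that $\{1,2\}$ and $\{3,4\}$ are disjoint, so no triangle contains both modified entries. The feasible set is then the box $[L_{12},d(1,2)]\times[L_{34},d(3,4)]$, and everything reduces to the single inequality $L_{12}+L_{34}\le d(1,3)+d(2,4)$.

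All sixteen cases of that inequality do close:
\begin{itemize}
\item The two combinations summing to $2d(1,3)-d(1,4)-d(2,3)$ and $2d(2,4)-d(1,4)-d(2,3)$ need the triangle inequality along the paths $1$--$4$--$2$--$3$ and $2$--$3$--$1$--$4$.
\item The remaining fourteen follow from $d(1,4)+d(2,3)\le d(1,3)+d(2,4)$ and nonnegativity alone.
\end{itemize}

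Your route buys three things:
\begin{itemize}
\item A proof that can be checked by hand.
\item An explicit choice, for instance $d'(1,2)=\max\bigl(L_{12},\,d(1,3)+d(2,4)-d(3,4)\bigr)$ and $d'(3,4)=d(1,3)+d(2,4)-d'(1,2)$.
\item A clear account of what each half of the hypothesis does. The right inequality of \eqref{eq:crossSumOrdering} only ensures the target sum is at most $d(1,2)+d(3,4)$, the top corner of the box. The left inequality, together with the triangle inequality, ensures it is at least $L_{12}+L_{34}$, the bottom corner.
\end{itemize}

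The paper's approach gives none of this insight. It is, however, mechanical and fits the polyhedral framework of \cref{prop:compatibility}. It would extend unchanged to variants where the lowered entries share a triangle, where your decoupling would no longer be available.
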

\begin{proof}
  This lemma can be shown through direct computation.  Consider $\RR^6$ with coordinates $d_{12},d_{34},d_{13},d_{24},d_{14},d_{23}$ as well as $\RR^8$ with two extra coordinates $d_{12}',d_{34}'$.

  In $\RR^6$, consider the polytope $P$ cut out by the following linear constraints:
  \begin{itemize}
    \item $d_{ij}\geq 0$ for all $i,j$ and $d_{ij}\leq d_{ik}+d_{kj}$ for all $i,j,k$,
    \item $d_{14}+d_{23}\leq d_{13}+d_{24}\leq d_{12}+d_{34}$ as in Equation~\eqref{eq:crossSumOrdering}.
  \end{itemize}

  In $\RR^8$, and with $d_{ij}'\coloneqq d_{ij}$ for $\{ij\}\neq\{1,2\},\{3,4\}$, consider the polytope $Q$ cut out by the following linear constraints:
  \begin{itemize}
    \item $d_{ij}\geq 0$ for all $i,j$ and $d_{ij}\leq d_{ik}+d_{kj}$ for all $i,j,k$,
    \item $d_{ij}'\geq 0$ for all $i,j$ and $d_{ij}'\leq d_{ik}'+d_{kj}'$ for all $i,j,k$,
    \item $d_{ij}'\leq d_{ij}$ for $\{ij\}=\{1,2\},\{3,4\}$,
    \item $d_{12}'+d_{34}' = d_{13} + d_{24}$.
  \end{itemize}

  In words, each point in $P$ is a metric $d$ on $X$ satisfying Equation~\eqref{eq:crossSumOrdering}, and each point in $Q$ is an altered distance with the desired properties.

  A quick computation shows that the projection $\RR^8\twoheadrightarrow\RR^6$ onto $d_{ij}$ maps $Q$ to $P$, which proves the lemma.  OSCAR code verifying this computation can be found in our repository \cite{ACGJQR2026github}.
\end{proof}

\begin{theorem}
  \label{thm:compatiblePairs}
  Let $\{a_1,a_2\}, \{b_1,b_2\}\in\mathcal A$ be two laps.  Then
  \begin{align*}
    & \{a_1,a_2\}, \{b_1,b_2\}\text{ compatible} \qquad\Longleftrightarrow\\
    & \quad d_G(a_1,a_2)+d_G(b_1,b_2) \leq \max\{d_G(a_1,b_1)+d_G(a_2,b_2), d_G(a_1,b_2)+d_G(a_2,b_1)\}.
  \end{align*}
\end{theorem}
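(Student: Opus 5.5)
The forward implication follows from the four point condition. The converse is a construction on the four lap endpoints, followed by an extension argument to all of $V$.

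\emph{($\Rightarrow$).} Suppose the two laps are compatible, witnessed by a $1$-Lipschitz map $\iota\colon G\to\Gamma$ with $d_\Gamma(\iota(a_1),\iota(a_2))=d_G(a_1,a_2)$ and $d_\Gamma(\iota(b_1),\iota(b_2))=d_G(b_1,b_2)$. The pullback $d_\Gamma(\iota(\cdot),\iota(\cdot))$ on $\{a_1,a_2,b_1,b_2\}$ is a tree (pseudo)metric, so the four point condition of \cref{prop: four point condition} applies. Coincident images or shared endpoints only make the condition easier to check.

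The sum $d_\Gamma(\iota a_1,\iota a_2)+d_\Gamma(\iota b_1,\iota b_2)$ equals the left-hand side $d_G(a_1,a_2)+d_G(b_1,b_2)$. Since $\iota$ is $1$-Lipschitz, the two cross sums in $\Gamma$ are bounded above by the corresponding cross sums in $G$. If the inequality failed, the $a$-$b$ sum would be the strict unique maximum among the three sums in $\Gamma$. That contradicts the four point condition, so the inequality holds.

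\emph{($\Leftarrow$).} First dispose of degenerate cases. If the laps share a vertex, say $a_1=b_1$, then the path map $p_{a_1}$ realizes both $d_G(a_1,a_2)$ and $d_G(a_1,b_2)$, and we are done. So assume the four vertices are distinct and write $d=d_G$ restricted to them.

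The goal is a metric $d'\le d$ on these four points that satisfies the four point condition and agrees with $d$ on the pairs $\{a_1,a_2\}$ and $\{b_1,b_2\}$.
\begin{itemize}
\item If the maximum of the three pair sums is already attained twice, take $d'=d$.
\item Otherwise the hypothesis forces the unique maximum to be one of the two cross sums, say $d(a_1,b_1)+d(a_2,b_2)$. Relabel the points as $1,2,3,4$ so that this cross pairing becomes $\{1,2\},\{3,4\}$, as in \cref{lem:compatiblePair}. Apply that lemma: it lowers only the distances of the maximal pairing, which are not lap pairs. It returns a metric $d'\le d$ in which the maximal sum is now attained twice, while all other distances, including both lap distances, are unchanged.
\end{itemize}

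By \cref{prop: four point condition}, $d'$ is realized by points $x_{a_1},x_{a_2},x_{b_1},x_{b_2}$ in some metric tree $\Gamma$. Since $d'\le d_G$, the assignment $a_i\mapsto x_{a_i}$, $b_i\mapsto x_{b_i}$ is a $1$-Lipschitz partial map $G\dashrightarrow\Gamma$ that realizes both lap distances.

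\emph{Extension step.} The main obstacle is extending this partial map to a $1$-Lipschitz map on all of $V$. I would use that metric trees (as geodesic $\mathbb R$-trees) are hyperconvex, equivalently that closed balls in a tree have the Helly property. The extension is built one vertex at a time.
\begin{itemize}
\item To place a new vertex $v$, one needs a point lying in every ball $B(\iota(u),d_G(v,u))$ for $u$ already placed.
\item Any two such balls intersect, because $d_\Gamma(\iota u,\iota u')\le d_G(u,u')\le d_G(v,u)+d_G(v,u')$ and $\Gamma$ is geodesic.
\item Pairwise intersecting balls (convex subtrees) in a tree have a common point, by the Helly property for subtrees.
\end{itemize}
Adding a short pendant segment is never needed, since the new point lies in $\Gamma$ itself. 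After all vertices are placed, the resulting map is $1$-Lipschitz and realizes both lap distances, so $\{a_1,a_2\},\{b_1,b_2\}$ is compatible.

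If one prefers to avoid citing hyperconvexity, I would prove the Helly step directly. Take the ball whose radius is smallest and move along geodesics toward the other centres. Alternatively, argue as in the $1$-sum constructions earlier by attaching segments.
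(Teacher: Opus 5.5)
Your proposal is correct and follows the same route as the paper. The forward direction uses the four point condition in $\Gamma$ together with the $1$-Lipschitz bound on the cross sums. The converse applies \cref{lem:compatiblePair}, realises the modified four-point metric in a tree, and extends to all of $V$ by hyperconvexity of metric trees; the paper just cites Kirk for that last step, while you unpack it as a vertex-by-vertex Helly argument for balls. Handling laps that share a vertex separately via a path map is a small clarification the paper leaves implicit. When relabelling for the lemma, you may also need to swap $3\leftrightarrow 4$ so the two non-maximal sums appear in the lemma's order.
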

\begin{proof}
  For the ``$\Rightarrow$'' direction, let $\{a_1,a_2\}$, $\{b_1,b_2\}$ is compatible, and let $\iota\colon G\rightarrow\Gamma$ be the $1$-Lipschitz map with $d_G(a_1,a_2)=d_\Gamma(\iota(a_1),\iota(a_2))$ and $d_G(b_1,b_2)=d_\Gamma(\iota(b_1),\iota(b_2))$.  Then we have
  \begin{align*}
    &d_G(a_1,a_2)+d_G(b_1,b_2) = d_\Gamma(\iota(a_1),\iota(a_2))+d_\Gamma(\iota(b_1),\iota(b_2)) \\
    &\qquad \overset{\customlabel{eq:compatiblePairs1}{(1)}}{\leq} \max\{d_\Gamma(\iota(a_1),\iota(b_1))+d_\Gamma(\iota(a_2),\iota(b_2)), d_\Gamma(\iota(a_1),\iota(b_2))+d_\Gamma(\iota(a_2),\iota(b_1))\} \\
    &\qquad \overset{\customlabel{eq:compatiblePairs2}{(2)}}{\leq} \max\{d_G(a_1,b_1)+d_G(a_2,b_2),d_G(a_1,b_2)+d_G(a_2,b_1)\},
  \end{align*}
  where Inequality~\ref{eq:compatiblePairs1} follows from the 4-point condition on $\Gamma$ and Inequality~\ref{eq:compatiblePairs2} holds since $\iota$ is $1$-Lipschitz.

  For the ``$\Leftarrow$'' direction, consider the metric space $(X,d)$ where $X=\{a_1,a_2,b_1,b_2\}$ and $d=d_G$.  Since $d_G(a_1,a_2)+d_G(b_1,b_2) \leq \max\{d_G(a_1,b_1)+d_G(a_2,b_2), d_G(a_1,b_2)+d_G(a_2,b_1)\}$ we are in one of four cases:
  \begin{enumerate}
    \item $d_G(a_1,a_2)+d_G(b_1,b_2) \leq d_G(a_1,b_1)+d_G(a_2,b_2) \leq d_G(a_1,b_2)+d_G(a_2,b_1)$, or
    \item $d_G(a_1,a_2)+d_G(b_1,b_2) \leq d_G(a_1,b_2)+d_G(a_2,b_1) \leq d_G(a_1,b_1)+d_G(a_2,b_2)$, or
    \item $d_G(a_1,b_1)+d_G(a_2,b_2) \leq d_G(a_1,a_2)+d_G(b_1,b_2) \leq d_G(a_1,b_2)+d_G(a_2,b_1)$, or
    \item $d_G(a_1,b_2)+d_G(a_2,b_1) \leq d_G(a_1,a_2)+d_G(b_1,b_2) \leq d_G(a_1,b_1)+d_G(a_2,b_2)$.
  \end{enumerate}
  In all four cases, we are in a position to apply \cref{lem:compatiblePair} to obtain a $1$-Lipschitz map $\mathrm{id}_X\colon (X,d) \rightarrow (X,d')$ with $d(a_1,a_2)=d'(a_1,a_2)$, $d(b_1,b_2)=d'(b_1,b_2)$ and $(X,d')$ satisfying the four point condition, which gives us an isometry $(X,d')\overset{\sim}\longrightarrow \Gamma$.  The resulting composition $(X,d) \overset{\mathrm{id}_X}\longrightarrow (X,d')\overset{\sim}\longrightarrow \Gamma$ can then to a $1$-Lipschitz map $G\rightarrow \Gamma$ \cite{Kirk1998}, showing that $\{a_1,a_2\}$ and $\{b_1,b_2\}$ are compatible.
\end{proof}

As quick corollaries, we obtain:

\begin{definition}
  Let $\{a_1,a_2\},\{b_1,b_2\}\in \mathcal{A}$ be two distinct laps.  We say $\{a_1,a_2\},\{b_1,b_2\}$ are
  \begin{enumerate}
    \item \emph{T-shaped}, if
    \begin{enumerate}
      \item $a_i\neq b_1,b_2$ yet $a_i$ lies on the lap $\{b_1,b_2\}$ for some $i=1,2$, or
      \item $b_i\neq a_1,a_2$ yet $b_i$ lies on the lap $\{a_1,a_2\}$ for some $i=1,2$,
    \end{enumerate}
    \item \emph{V-shaped}, if $\{a_1,a_2\}\cap\{b_1,b_2\}\neq\varnothing$.
  \end{enumerate}
\end{definition}

\begin{corollary}
  \label{cor:compatiblePairs}
  Let $\{a_1,a_2\},\{b_1,b_2\}\in \mathcal{A}$ be two laps.  Then
  \begin{enumerate}
    \item if $\{a_1,a_2\},\{b_1,b_2\}$ are $T$-shaped, then they are incompatible, and
    \item if $\{a_1,a_2\},\{b_1,b_2\}$ are $V$-shaped, then they are compatible.
  \end{enumerate}
\end{corollary}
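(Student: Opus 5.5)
Both parts follow from \cref{thm:compatiblePairs}: in each case I compare $d_G(a_1,a_2)+d_G(b_1,b_2)$ with the two cross sums $d_G(a_1,b_1)+d_G(a_2,b_2)$ and $d_G(a_1,b_2)+d_G(a_2,b_1)$, using only the triangle inequality and the defining property of a vertex lying on a lap.

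For (2), suppose without loss of generality $a_1=b_1$. Then the first cross sum is $d_G(a_1,b_1)+d_G(a_2,b_2)=d_G(a_2,b_2)$, while the left-hand side is $d_G(a_1,a_2)+d_G(a_1,b_2)$. That is the wrong direction, so I use the other cross sum, which equals $d_G(a_1,b_2)+d_G(a_2,a_1)$ and hence coincides with the left-hand side. The inequality of \cref{thm:compatiblePairs} therefore holds with equality and the laps are compatible. Other coincidences such as $a_1=b_2$ follow by relabelling.

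For (1), suppose without loss of generality $a_1\neq b_1,b_2$ and $a_1$ lies on a geodesic from $b_1$ to $b_2$. Then $d_G(b_1,b_2)=d_G(b_1,a_1)+d_G(a_1,b_2)$, with both summands at least $1$. I want to show both cross sums are strictly smaller than $d_G(a_1,a_2)+d_G(b_1,b_2)$. For the first cross sum, $d_G(a_2,b_2)\le d_G(a_2,a_1)+d_G(a_1,b_2)$ by the triangle inequality, so
\[
d_G(a_1,b_1)+d_G(a_2,b_2)\le d_G(a_1,b_1)+d_G(a_1,b_2)+d_G(a_1,a_2)=d_G(b_1,b_2)+d_G(a_1,a_2).
\]
The second cross sum is symmetric. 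This only gives non-strict inequalities, and strictness is the main obstacle.

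To get strictness I use that $\{a_1,a_2\}$ is a lap, so $a_1$ is a local maximum of $d_G(\cdot,a_2)$. Equality in the first bound forces $a_1$ to lie on a geodesic from $a_2$ to $b_2$. Since $a_1\neq b_2$, I take $a'$ to be the neighbour of $a_1$ on that geodesic in the direction of $b_2$. Then $d_G(a_2,a')=d_G(a_2,a_1)+1$, contradicting the lap property of $\{a_1,a_2\}$. The same argument, with $b_1$ in place of $b_2$ and using $a_1\neq b_1$, handles the second cross sum. Both cross sums are then strictly less than the left-hand side, the criterion of \cref{thm:compatiblePairs} fails, and the laps are incompatible. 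Case (b) of the T-shape definition is symmetric.
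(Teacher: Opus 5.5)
Your proof is correct. Part (1) is essentially the paper's argument. The paper assumes compatibility, takes without loss of generality $d_G(a_1,b_1)+d_G(a_2,b_2)$ to be the larger cross sum, and rearranges the inequality of \cref{thm:compatiblePairs} into $d_G(a_2,b_2)\geq d_G(a_1,a_2)+d_G(a_1,b_2)$. This places $a_1$ in the interior of a geodesic from $a_2$ to $b_2$. You instead bound both cross sums by the triangle inequality and rule out equality for the same reason. You also make the contradiction with the lap property explicit via the neighbour $a'$, which the paper leaves implicit.

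Part (2) is where you genuinely differ. The paper does not use \cref{thm:compatiblePairs} there. If $a_1=b_1$, the path map $p_{a_1}$ from \cref{prop: phylogenetic rank less than n-1} is a $1$-Lipschitz map to an interval that preserves all distances from $a_1$. Hence it realises $d_G(a_1,a_2)$ and $d_G(a_1,b_2)$ at once. You instead check that the criterion of \cref{thm:compatiblePairs} holds with equality and invoke its ``$\Leftarrow$'' direction. This is legitimate as the theorem is stated, and it makes both parts literal corollaries of one criterion. But that direction is the heavy one: it rests on the computer-verified \cref{lem:compatiblePair} and a $1$-Lipschitz extension theorem into trees. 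Here it is also applied to a degenerate configuration with only three distinct points. The path map gives an explicit, self-contained witness in one line, so it is the more economical argument for (2).
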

\begin{proof}\
  \begin{enumerate}[leftmargin=*]
    \item Without loss of generality, let $a_1\neq b_1, b_2$ yet $a_1$ lies on the lap $\{b_1,b_2\}$.  Assume that $\{a_1,a_2\}$, $\{b_1,b_2\}$ are compatible.  Without loss of generality, suppose $d_G(a_1,b_1)+d_G(a_2,b_2) \geq d_G(a_1,b_2)+d_G(a_2,b_1)$.  By \cref{thm:compatiblePairs}, we thus get $d_G(a_1,a_2)+d_G(b_1,b_2) \leq d_G(a_1,b_1)+d_G(a_2,b_2)$ which implies that
    \begin{equation*}
      d_G(a_2,b_2)\geq d_G(a_1,a_2) + (d_G(b_1,b_2)-d_G(a_1,b_1)) = d_G(a_1,a_2) + d_G(a_1,b_2),
    \end{equation*}
    where the final equality follows from the assumption that $a_1$ lies on a geodesic connecting $b_1$ and $b_2$.  This implies that $a_1$ lies in the interior of a geodesic connecting $a_2$ and $b_2$, contradicting the local antipodality of $a_1$ and $a_2$.

    \item In the case of V-shaped laps, we can construct a $1$-Lipschitz tree embedding which realises the distances in each lap using the same method as in \cref{prop: phylogenetic rank less than n-1}.\qedhere
  \end{enumerate}
\end{proof}

\begin{example}\label{ex:VTshapes}\
  \begin{enumerate}[leftmargin=*]
    \item Let $G$ be the prism graph from \cref{ex:claps} (1).  The pairs $\{ \{1,5\}, \{1,6\}\}$, $\{\{2,4\},\{2,6\}\}$ and $\{\{3,4\},\{3,5\}\}$ are all $V$-shaped, confirming again their compatibility.  In contrast, $\{\{1,5\},\{2,4\}\}$ is $T$-shaped which makes it incompatible.
    \item Let $G$ be the augmented path from \cref{ex:claps} (2).  As above, the pairs $\{\{1,4\},\{1,5\}\}$ and $\{\{3,6\},\{4,6\}\}$ are $V$-shaped, while $\{\{1,4\},\{3,6\}\}$ is $T$-shaped.
  \end{enumerate}
\end{example}

%%% Local Variables:
%%% mode: LaTeX
%%% TeX-master: "graph_embedding_paper"
%%% End:

\section{Computational results}\label{sec:computations}
We have implemented our algorithms in Sections \ref{sec:heuristicAlgorithm} and \ref{sec:exactAlgorithm} in \textsc{Julia}, relying on the computer algebra system OSCAR \cite{OSCAR,OSCAR-book}.  We have computed the exact ranks of all graphs up to $7$ vertices, and approximate ranks of all graphs up to $8$ vertices.  In this section, we go over some examples of interest, such as a counterexample to a conjecture of Pachter and Sturmfels on a generalisation of the four point condition.

The full data can be found in \cite{ACGJQR2026github}.  A summary of the data is shown in Table~\ref{tab: small graph ranks} and Table~\ref{tab: small graph ranks connected}.  The ranks were computed by going over all biconnected graphs using Algorithms \ref{alg:greedyAlgorithm} and \ref{alg:exactAlgorithm}, then extrapolating to all connected graphs using Proposition~\ref{prop: separable graph rank}.  Note that the approximate ranks are always an upper bound and that they are exact if equal to $1$ or $2$ by \cref{thm: greedy alg correct for rank 1/2}.

\begin{table}[t]
  \centering
  \begin{tabular}{m{1cm} m{1cm} m{1cm} m{1cm} m{1cm} m{1cm}}
    \toprule
    & \multicolumn{5}{c}{Phylogenetic rank $\prank(G)$}\\
    $|G|$ & 1 & 2 & 3 & 4 & 5 \\
    \midrule
    4  & 1 & \multicolumn{1}{m{1cm}|}{2}    & 0    & 0    & 0 \\
    \cline{4-4}
    5  & 1 & 8   & \multicolumn{1}{m{1cm}|}{1}    & 0    & 0 \\
    % \cline{5-5}
    6  & 1 & 32  & \multicolumn{1}{m{1cm}|}{23}   & 0    & 0 \\
    \cline{5-5}
    7 & 1 & 131 & 316 & \multicolumn{1}{m{1cm}|}{20} & 0\\
    8* & 1 & 662 & 4313 & \multicolumn{1}{m{1cm}|}{2145} & \textcolor{red}{\bf 2}\\
    \bottomrule
  \end{tabular}
  \caption{Phylogenetic ranks of $2$-connected graphs with a given number of vertices. The ranks of $8$-vertex graphs are an upper bound provided by the greedy algorithm: Algorithm~\ref{alg:greedyAlgorithm}. We verified the two rank five graphs. The zigzag line separates $\lceil n/2 \rceil$ and $\lceil n/2 \rceil +1$.}
  \label{tab: small graph ranks}
\end{table}

\begin{table}[t]
  \centering
  \begin{tabular}{m{1cm} m{1cm} m{1cm} m{1cm} m{1cm} m{1cm}}
    \toprule
    & \multicolumn{5}{c}{Phylogenetic rank $\prank(G)$}\\
    $|G|$ & 1 & 2 & 3 & 4 & 5 \\
    \midrule
    4  & 4 & \multicolumn{1}{m{1cm}|}{2}    & 0    & 0    & 0 \\
    \cline{4-4}
    5  & 9 & 11   & \multicolumn{1}{m{1cm}|}{1}    & 0    & 0 \\
    % \cline{5-5}
    6  & 22 & 66  & \multicolumn{1}{m{1cm}|}{24}   & 0    & 0 \\
    \cline{5-5}
    7  & 59 & 383 & 391  & \multicolumn{1}{m{1cm}|}{20}   & 0\\
    % \cline{6-6}
    8* & 165 & 2373 & 6266 & \multicolumn{1}{m{1cm}|}{2311} & \textcolor{red}{\bf 2}\\
    \bottomrule
  \end{tabular}
  \caption{Phylogenetic ranks of connected graphs with a given number of vertices. The ranks of $8$-vertex graphs are an upper bound provided by the greedy algorithm: Algorithm~\ref{alg:greedyAlgorithm}. We verified the two rank five graphs. The zigzag line separates $\lceil n/2 \rceil$ and $\lceil n/2 \rceil +1$}.
  \label{tab: small graph ranks connected}
\end{table}

\begin{figure}[t]
  \centering
  \begin{tikzpicture}[thick]
    \tikzset{
      vx/.style ={circle,fill=black,inner sep=0pt,minimum size=4.5pt},
      lb/.style  ={font=\scriptsize,inner sep=2pt},
    }
    % vertices, cyclically placed
    \foreach \i/\a in {1/90,2/45,3/0,4/-45,5/-90,6/-135,7/180,8/135}{
      \node[vx,label={[lb]\a:$\i$}] (v\i) at (\a:1.95){};}
    % the 8-cycle
    \foreach \i/\j in {1/2,2/3,3/4,4/5,5/6,6/7,7/8,8/1}{\draw (v\i)--(v\j);}
    % short chords: 13, 35, 57, 71, 28, 46
    \foreach \i/\j in {1/3,3/5,5/7,7/1,2/8,4/6}{\draw (v\i)--(v\j);}
    % antipodal chords 15, 26, 48: bent so they do not all meet at the centre
    \draw (v1) to (v5);
    \draw (v2) to (v6);
    \draw (v4) to (v8);
    \node at (-2.5,2) {$G_1:$};

    % ---- second graph, drawn planar (no edge crossings), shifted to the right ----
    \begin{scope}[shift={(6,-0.25)}]
      \node[vx,label={[lb]90:$1$}]    (w1) at (0,2){};
      \node[vx,label={[lb]20:$2$}] (w2) at (.75,-.6){};
      \node[vx,label={[lb]160:$3$}]   (w3) at (-.75,0.4){};
      \node[vx,label={[lb]90:$4$}]    (w4) at (0,-1.3){};
      \node[vx,label={[lb]20:$5$}]    (w5) at (0.75,0.4){};
      \node[vx,label={[lb]-150:$6$}]  (w6) at (-2,-1.5){};
      \node[vx,label={[lb]-20:$7$}]   (w7) at ( 2,-1.5){};
      \node[vx,label={[lb]150:$8$}]  (w8) at (-.75,-.6){};
      \foreach \i/\j in {1/3,1/5,1/6,1/7,2/4,2/5,2/7,2/8,3/5,3/6,3/8,4/6,4/7,4/8,5/7,5/8,6/7,6/8}{\draw (w\i)--(w\j);}
    \end{scope}
    \node at (4,2) {$G_2:$};
  \end{tikzpicture}\vspace{-2mm}
  \caption{The two $8$-vertex graphs with rank five}
  \label{fig: rank 5 with 8 vertices}
\end{figure}

\begin{example}[graphs of high rank]
  \label{example: small graph ranks}
  In terms of number of vertices $n$, the smallest graphs with phylogenetic rank exceeding $\lceil \frac{n}{2} \rceil$ are the $8$-vertex graphs $G_1, G_2$ with the following edge sets, see also \cref{fig: rank 5 with 8 vertices}:
  \begin{align*}
    E(G_1)&= \{12,23,34,45,56,67,78,18,\; 13,35,57,17,28,46 \},\\
    E(G_2)&= \{13,15,16,17,24,25,27,28,35,36,38,46,47,48,57,58,67,68\}.
  \end{align*}
\end{example}

\begin{figure}[t]
  % \resizebox{\textwidth}{!}{%
  \begin{tikzpicture}[
    vtx/.style={circle, fill, inner sep=1.6pt},
    every label/.style={font=\small\linespread{0.9}\selectfont,
      align=center, label distance=2pt}, % line spread controls vertical spacing between lines
    scale=0.95]

    %%% left: the 6-cycle with hub %%%
    \begin{scope}[scale=0.7]
      \foreach \i [evaluate=\i as \ang using {90-(\i-1)*60}] in {1,...,6}{
        \node[vtx, label={\ang:\i}] (v\i) at (\ang:2.5) {};
      }
      \node[vtx, label={90:7}] (v7) at (0,0) {};
      \foreach \i [evaluate=\i as \j using {int(mod(\i,6)+1)}] in {1,...,6}{
        \draw (v\i) -- (v\j);
      }
      \foreach \i in {2,3,6,5}{ \draw (v7) -- (v\i); }
    \end{scope}

    \begin{scope}[shift={(3,1)}, scale=1.25]
      % --- T_1 ---
      \node[label={$T_1$}] at (.25,-.5) {};
      \node[vtx, label={below:$4$}] (y1) at (1,0) {};
      \node[vtx, label={below:$35$}] (y2) at (2,0) {};
      \node[vtx, label={below:$26$}] (y3) at (3,0) {};
      \node[vtx, label={below:$1$}] (y4) at (4,0) {};
      \node[vtx, label={above:$7$}] (y5) at (2.5,.5) {};
      \coordinate (junction) at (2.5,0) {};
      \draw (y1) -- (y2) -- (y3) -- (y4);
      \draw (junction) -- (y5);
      % --- T_2 ---
    \end{scope}
    \begin{scope}[shift={(3,-1)}, scale=1.25]
      \node[label={$T_2$}] at (.25,-.5) {};
      \node[vtx, label={below:$56$}]  (x1) at (1,0) {};
      \node[vtx, label={below:$147$}] (x2) at (2,0) {};
      \node[vtx, label={below:$23$}]  (x3) at (3,0) {};
      \draw (x1) -- (x2) -- (x3);

    \end{scope}
  \end{tikzpicture}\vspace{-2mm}%
  % }
  \caption{The graphs in Example~\ref{example: rank 5 with induced 15 cycle}. The graph on the left has a minimal embedding given by the trees on the right.}
  \label{fig: 6 cycle with 4 spokes}
\end{figure}

\begin{example}[graph with non-hereditary rank]\label{example: rank 5 with induced 15 cycle}
  \cref{thm: phylogenetic rank one} states that the property of having phylogenetic rank one is c-hereditary, i.e., that for any graph of rank one, its connected induced subgraphs also have rank one.

  To see that this is not true for phylogenetic rank at most two, let $G$ be the cycle on $6$ vertices with a vertex $7$ connected to $2$, $3$, $5$ and $6$.  One can see show that $\phylogeneticrank(G)=2$ by tree embedding $\iota\colon G \rightarrow T_1 \times T_2$ in \cref{fig: 6 cycle with 4 spokes}.
  Note that the induced subgraph $G[1,\dots,6]$ is the $6$-cycle, which has phylogenetic rank $3$. Thus, phylogenetic rank does not weakly decrease when taking induced subgraphs.
\end{example}

In \cite{AlgStatsForBio}, Pachter and Sturmfels conjectured the following:

\begin{conjecture}[{\cite[Section~3.5]{AlgStatsForBio}}]\label{conj: PS 4 pt cond generalisation}
  Let $X$ be a finite metric space and assume that for each subset $Y \subseteq X$ with $Y = 2k + 2$, we have $\prank(Y) \le k$, then $\prank(X) \le k$.
\end{conjecture}

\cref{conj: PS 4 pt cond generalisation} is a generalisation of the four point condition as in \cref{prop: four point condition}, which can be recovered by setting $k=1$.  The following example shows that \cref{conj: PS 4 pt cond generalisation} fails for $k = 2$:

\begin{figure}[t]
  \tikzset{
    vtx/.style      = {circle, fill, inner sep=1.7pt},          % ordinary vertex
    removed/.style  = {circle, draw, fill=white, inner sep=2.6pt, thin},
    % deleted vertex
    junction/.style = {circle, inner sep=0pt, outer sep=0pt, minimum size=0pt},
    % branch point, no dot
    glbl/.style     = {font=\small, inner sep=3pt},             % graph label
    tlbl/.style     = {font=\small, align=center, inner sep=0pt},% tree label(s)
  }
  \centering
  \begin{tikzpicture}
    \node (center) at (0,0)
    {
      \begin{tikzpicture}[thick, line join=round, scale=1.5,rotate=270]
        \tikzset{
          v/.style ={circle,fill=black,inner sep=0pt,minimum size=4.5pt},
          lb/.style  ={font=\scriptsize,inner sep=2pt},
        }
        \node[v, label={[lb] above: $1$}] at (-.5,-1)  (v1){};
        \node[v, label={[lb] below: $2$}] at (.5,-1)   (v2){};
        \node[v, label={[lb] above: $3$}] at (-.5,0)   (v3){};
        \node[v, label={[lb] below: $4$}] at (.5,0)    (v4){};
        \node[v, label={[lb] above: $5$}] at (-.5,1)   (v5){};
        \node[v, label={[lb] below: $6$}] at (.5,1)    (v6){};
        \node[v, label={[lb] above: $7$}] at (0,1.85)  (v7){};

        \draw (v1)--(v3)--(v4)--(v2)--(v1);
        \draw (v3)--(v5)--(v6)--(v4) (v3)--(v6) (v4)--(v5);
        \draw (v5)--(v7)--(v6);

      \end{tikzpicture}

    };
    \node (topLeft) at (-4,4)
    {
      \begin{tikzpicture}[thick, line join=round, scale=0.8]
        %% ------------------------- the graph -------------------------
        \node[vtx]     (g7) at (1,3) {};
        \node[vtx]     (g5) at (.25,2) {};
        \node[vtx]     (g6) at (1.75,2) {};
        \node[vtx]     (g3) at (.25,1) {};
        \node[vtx]     (g4) at (1.75,1) {};
        \node[vtx]     (g1) at (.25,0) {};
        \node[removed] (g2) at (1.75,0) {};  % vertex 2 is the deleted one

        \draw (g7) -- (g5) -- (g6) -- (g7);   % 7-5, 5-6, 6-7
        \draw (g5) -- (g3) -- (g4) -- (g6);   % 5-3, 3-4, 4-6
        \draw (g5) -- (g4);                   % diagonal
        \draw (g3) -- (g6);                   % diagonal
        \draw (g3) -- (g1) -- (g2) -- (g4);   % 3-1, 1-2, 2-4

        \node[glbl, above]      at (g7) {$7$};
        \node[glbl, left]       at (g5) {$5$};
        \node[glbl, right]      at (g6) {$6$};
        \node[glbl, left]       at (g3) {$3$};
        \node[glbl, right]      at (g4) {$4$};
        \node[glbl, left]       at (g1) {$1$};

        %% ------------------------- first tree ------------------------
        \begin{scope}[shift={(3,1.75)}]
          \node[vtx]      (a1) at (0,0) {};     % 2
          \node[vtx]      (a2) at (1.5,0) {};     % 4
          \node[junction] (a3) at (2.25,0) {};     % branch point (no label)
          \node[vtx]      (a4) at (3.0,0) {};     % 5,6
          \node[vtx]      (a5) at (4.5,0) {};     % 7
          \node[vtx]      (a6) at (2.25,0.75) {};     % 3

          \draw (a1) -- (a2) -- (a3) -- (a4) -- (a5);
          \draw (a3) -- (a6);

          \node[tlbl, below=4pt] at (a1) {$1$};
          \node[tlbl, below=4pt] at (a2) {$3$};
          \node[tlbl, below=4pt] at (a4) {$56$};
          \node[tlbl, below=4pt] at (a5) {$7$};
          \node[tlbl, above=4pt] at (a6) {$4$};
        \end{scope}

        %% ------------------------ second tree ------------------------
        \begin{scope}[shift={(3.75,.5)}]
          \node[vtx] (b1) at (0  ,0) {};          % 5
          \node[vtx] (b2) at (1.5,0) {};          % 3,4,6,7
          \node[vtx] (b3) at (3.0,0) {};          % 2

          \draw (b1) -- (b2) -- (b3);

          \node[tlbl, below=4pt] at (b1) {$5$};
          \node[tlbl, below=4pt] at (b2) {$3467$};
          \node[tlbl, below=4pt] at (b3) {$1$};
        \end{scope}
      \end{tikzpicture}
    };
    \node (topRight) at (4,4)
    {
      \begin{tikzpicture}[thick, line join=round, scale=0.8]

        %% ------------------------- the graph -------------------------
        \node[vtx]     (g7) at (1,3) {};
        \node[vtx]     (g5) at (.25,2) {};
        \node[vtx]     (g6) at (1.75,2) {};
        \node[vtx]     (g3) at (.25,1) {};
        \node[removed] (g4) at (1.75,1) {};   % vertex 4 is the deleted one
        \node[vtx]     (g1) at (.25,0) {};
        \node[vtx]     (g2) at (1.75,0) {};

        \draw (g7) -- (g5) -- (g6) -- (g7);
        \draw (g5) -- (g3) -- (g4) -- (g6);
        \draw (g5) -- (g4);
        \draw (g3) -- (g6);
        \draw (g3) -- (g1) -- (g2) -- (g4);

        \node[glbl, above] at (g7) {$7$};
        \node[glbl, left]  at (g5) {$5$};
        \node[glbl, right] at (g6) {$6$};
        \node[glbl, left]  at (g3) {$3$};
        \node[glbl, left]  at (g1) {$1$};
        \node[glbl, right] at (g2) {$2$};

        %% ------------------------- first tree ------------------------
        \begin{scope}[shift={(3,1.75)}]
          \node[vtx] (a1) at (0  ,0) {};          % 7
          \node[vtx] (a2) at (1.5,0) {};          % 5,6
          \node[vtx] (a3) at (3,0)   {};          % 3
          \node[vtx] (a4) at (4.5,0) {};          % 1
          \node[vtx] (a5) at (3.75,0.75) {};          % 2
          \node[junction] (a6) at (3.75,0) {};    % branch point

          \draw (a1) -- (a2) -- (a3) -- (a6) -- (a4);
          \draw (a6) -- (a5);

          \node[tlbl, below=4pt] at (a1) {$7$};
          \node[tlbl, below=4pt] at (a2) {$56$};
          \node[tlbl, below=4pt] at (a3) {$3$};
          \node[tlbl, below=4pt] at (a4) {$1$};
          \node[tlbl, above=4pt] at (a5) {$2$};
        \end{scope}

        %% ------------------------ second tree ------------------------
        \begin{scope}[shift={(3.75,0.5)}]
          \node[vtx] (b1) at (0  ,0) {};          % 5,3
          \node[vtx] (b2) at (1.5,0) {};          % 6,1,7
          \node[vtx] (b3) at (3.0,0) {};          % 2

          \draw (b1) -- (b2) -- (b3);

          \node[tlbl, below=4pt] at (b1) {$53$};
          \node[tlbl, below=4pt] at (b2) {$61 7$};
          \node[tlbl, below=4pt] at (b3) {$2$};
        \end{scope}

      \end{tikzpicture}
    };
    \node (botLeft) at (-4,-4)
    {
      \begin{tikzpicture}[thick, line join=round, scale=0.8]

        %% ------------------------- the graph -------------------------
        \node[vtx]     (g7) at (1,3) {};
        \node[vtx]     (g5) at (.25,2) {};
        \node[removed] (g6) at (1.75,2) {};   % vertex 6 is the deleted one
        \node[vtx]     (g3) at (.25,1) {};
        \node[vtx]     (g4) at (1.75,1) {};
        \node[vtx]     (g1) at (.25,0) {};
        \node[vtx]     (g2) at (1.75,0) {};

        \draw (g7) -- (g5) -- (g6) -- (g7);
        \draw (g5) -- (g3) -- (g4) -- (g6);
        \draw (g5) -- (g4);
        \draw (g3) -- (g6);
        \draw (g3) -- (g1) -- (g2) -- (g4);

        \node[glbl, above] at (g7) {$7$};
        \node[glbl, left]  at (g5) {$5$};
        \node[glbl, left]  at (g3) {$3$};
        \node[glbl, right] at (g4) {$4$};
        \node[glbl, left]  at (g1) {$1$};
        \node[glbl, right] at (g2) {$2$};

        %% ------------------------- first tree ------------------------
        \begin{scope}[shift={(3,2)}]
          \node[vtx] (a1) at (0  ,0) {};          % 1
          \node[vtx] (a2) at (1.2,0) {};          % 2,3
          \node[vtx] (a3) at (2.4,0) {};          % 4,5
          \node[vtx] (a4) at (3.6,0) {};          % 7

          \draw (a1) -- (a2) -- (a3) -- (a4);

          \node[tlbl, below=4pt] at (a1) {$1$};
          \node[tlbl, below=4pt] at (a2) {$23$};
          \node[tlbl, below=4pt] at (a3) {$45$};
          \node[tlbl, below=4pt] at (a4) {$7$};
        \end{scope}

        %% ------------------------ second tree ------------------------
        \begin{scope}[shift={(3,.5)}]
          \node[vtx] (b1) at (0  ,0) {};          % 2
          \node[vtx] (b2) at (1.2,0) {};          % 1,4
          \node[vtx] (b3) at (2.4,0) {};          % 3,5
          \node[vtx] (b4) at (3.6,0) {};          % 7

          \draw (b1) -- (b2) -- (b3) -- (b4);

          \node[tlbl, below=4pt] at (b1) {$2$};
          \node[tlbl, below=4pt] at (b2) {$14$};
          \node[tlbl, below=4pt] at (b3) {$35$};
          \node[tlbl, below=4pt] at (b4) {$7$};
        \end{scope}

      \end{tikzpicture}
    };
    \node (botRight) at (4,-4)
    {
      \begin{tikzpicture}[thick, line join=round, scale=0.8]

        %% ------------------------- the graph -------------------------
        \node[removed] (g7) at (1,3) {};      % vertex 7 is the deleted one
        \node[vtx]     (g5) at (.25,2) {};
        \node[vtx]     (g6) at (1.75,2) {};
        \node[vtx]     (g3) at (.25,1) {};
        \node[vtx]     (g4) at (1.75,1) {};
        \node[vtx]     (g1) at (.25,0) {};
        \node[vtx]     (g2) at (1.75,0) {};

        \draw (g7) -- (g5) -- (g6) -- (g7);
        \draw (g5) -- (g3) -- (g4) -- (g6);
        \draw (g5) -- (g4);
        \draw (g3) -- (g6);
        \draw (g3) -- (g1) -- (g2) -- (g4);

        \node[glbl, left]  at (g5) {$5$};
        \node[glbl, right] at (g6) {$6$};
        \node[glbl, left]  at (g3) {$3$};
        \node[glbl, right] at (g4) {$4$};
        \node[glbl, left]  at (g1) {$1$};
        \node[glbl, right] at (g2) {$2$};

        %% ------------------------- first tree ------------------------
        \begin{scope}[shift={(3,2)}]
          \node[vtx]      (a1) at (0  ,0) {};     % 1
          \node[vtx]      (a2) at (1.5,0) {};     % 2,3
          \node[junction] (a3) at (2.25,0) {};     % branch point (no label)
          \node[vtx]      (a4) at (3.0,0) {};     % 4
          \node[vtx]      (a5) at (1.75,.75) {};     % 5
          \node[vtx]      (a6) at (2.75,.75) {};     % 6

          \draw (a1) -- (a2) -- (a3) -- (a4);
          \draw (a5) -- (a3) -- (a6);

          \node[tlbl, below=4pt] at (a1) {$1$};
          \node[tlbl, below=4pt] at (a2) {$23$};
          \node[tlbl, below=4pt] at (a4) {$4$};
          \node[tlbl, left=4pt] at (a5) {$5$};
          \node[tlbl, right=4pt] at (a6) {$6$};
        \end{scope}

        %% ------------------------ second tree ------------------------
        \begin{scope}[shift={(3,0.25)}]
          \node[vtx] (b1) at (0  ,0) {};          % 2
          \node[vtx] (b2) at (1.5,0) {};          % 1,4
          \node[junction] (b3) at (2.25,0) {};     % branch point (no label)
          \node[vtx] (b4) at (3,0)   {};          % 3
          \node[vtx] (b5) at (1.75,.75) {};          % 5
          \node[vtx] (b6) at (2.75,.75) {};          % 6

          \draw (b1) -- (b2) -- (b3) -- (b4);
          \draw (b5) -- (b3) -- (b6);

          \node[tlbl, below=4pt] at (b1) {$2$};
          \node[tlbl, below=4pt] at (b2) {$14$};
          \node[tlbl, below=4pt] at (b4) {$3$};
          \node[tlbl, left=4pt] at (b5) {$5$};
          \node[tlbl, right=4pt] at (b6) {$6$};
        \end{scope}
      \end{tikzpicture}
    };
    \node[left,xshift=-27mm] at (center) {$G$};
    \draw[->] (center.north west) -- ++(-1,1) node[midway,left,xshift=-3mm] {removing vertex 2};
    \draw[->] (center.north east) -- ++(1,1) node[midway,right,xshift=3mm] {removing vertex 4};
    \draw[->] (center.south west) -- ++(-1,-1) node[midway,left,xshift=-3mm] {removing vertex 6};
    \draw[->] (center.south east) -- ++(1,-1) node[midway,right,xshift=3mm] {removing vertex 7};
  \end{tikzpicture}\vspace{-2mm}
  \caption{Counterexample to Conjecture~\ref{conj: PS 4 pt cond generalisation} in Example~\ref{example: PS counterexample}.  All $6$-point metric subspaces of $G$ have rank two.}
  \label{fig:PScounterexample}
\end{figure}

\begin{example}\label{example: PS counterexample}
  Consider the $7$-vertex graph $G$ with edge set
  \begin{equation*}
    E(G) = \{12,13,24,34,35,36,45,46,56,57,67 \}.
  \end{equation*}
  Our computation shows that $\prank(G) \ge 3$. Moreover, for each point $i \in [7]$, we can show that the metric subspace of $G$ obtained by removing $i$ has rank two as shown in Figure~\ref{fig:PScounterexample}.  Here, we make use of the fact \cref{alg:greedyAlgorithm} can be easily generalized to general finite metric spaces, and that it is exact if it returns $2$.
\end{example}

\section{Open questions}

We conclude the paper with a few open questions.  We will focus on questions that are of particular interest in the context of non-archimedean optimization \cite{LFMR2026}, in which a product of metric trees $\Gamma_1\times\dots\times\Gamma_n$ is a natural embedding space for data.

The first questions relate to the \emph{phylogenetic density} of a graph, which we define to be:
\begin{equation*}
  \phylogeneticdensity(G)\coloneqq \frac{\phylogeneticrank(G)}{|V(G)|}.
\end{equation*}

By \cref{prop: phylogenetic rank less than n-1}, we know that $0\leq \phylogeneticdensity(G)<1$.  We conjecture that the maximal possible phylogenetic density approaches $1$ as the number of vertices increases:

\begin{conjecture}\label{conj:phylogeneticDensity}
  We have
  \begin{math}
    \lim_{n\rightarrow\infty} \max_{|V(G)|=n} \phylogeneticdensity(G)=1.
  \end{math}
\end{conjecture}

In the light of the result by \cite{wolfe1967imbedding}, \cref{conj:phylogeneticDensity} implies that, asymptotically and in the worst case, there is no difference between the required dimension of embeddings into $\RR^n$ with the supremum norm, and the required dimension of embeddings into products of metric trees $\Gamma_1\times\dots\times\Gamma_n$ as studied in this paper.

In our experiments on small graphs in \cref{sec:computations} however, we observed that the maximal phylogenetic rank is rarely attained, see \cref{tab: small graph ranks connected}.  This naturally leads to the following question:

\begin{question}\label{ques:phylogeneticDistribution}
  What is the distribution of phylogenetic densities?
\end{question}

\cref{ques:phylogeneticDistribution} is left deliberately vague.  What is the expected phylogenetic density (for any model of random graph)?  Fixing a number of vertices $n$, is the sequence of numbers of graphs with the same phylogenetic rank (i.e., rows of \cref{tab: small graph ranks connected}) unimodular?  Is it log-concave?  Insights into the distribution would directly translate into important insights on required embedding dimensions in non-archimedean optimisation.

Finally, recall that \cref{thm: phylogenetic rank one} characterises graphs of phylogenetic rank one.  A natural generalisation is the following question:

\begin{question}\label{ques:phygeneticCharactisation}
  Characterise graphs with phylogenetic rank at most $k$ for $k\geq 2$.
\end{question}

For example, \cref{thm: phylogenetic rank one} (2) characterises graphs of phylogenetic rank one in terms of the forbidden induced subgraphs.
Unfortunately, \cref{example: rank 5 with induced 15 cycle} indicates that graphs of phylogenetic rank two or higher may not permit such a description.  Howver, since \cref{prop: isometric subgraph rank bound} shows that the phylogenetic rank is monotone for isometric subgraphs, maybe it is possible to characterise graphs of phylogenetic rank at most $k$ for $k\geq 2$ via their forbidden isometric subgraphs.  Insights into such a description would be useful in non-archimedean optimisation, as the local structure of data is generally much easier to access than the global structure of data.

\renewcommand{\bibfont}{\small}
\printbibliography

\end{document}